\let\div\undefined
\DeclareMathOperator{\div}{div}
\newcommand{\yply}{h}
\theoremstyle{plain} \newtheorem{thrm}{Theorem}[section] \newtheorem{lmm}[thrm]{Lemma} \newtheorem{crllr}[thrm]{Corollary} \newtheorem{prpstn}[thrm]{Proposition}  
\theoremstyle{definition} \newtheorem{dfntn}[thrm]{Definition}  \newtheorem{xmpl}[thrm]{Example}  \newtheorem{rmrk}[thrm]{Remark}     
\begin{document}
\title{Stability estimates for systems with small cross-diffusion}\thanks{Maria Bruna was supported by the L'Or\'eal UK and Ireland Fellowship For Women In Science.}\thanks{Luca Alasio was supported by the Engineering and Physical Sciences Research Council grant [EP/L015811/1].}

\author{Luca Alasio}
\author{Maria Bruna}
\author{Yves Capdeboscq}
\begin{abstract} 
We discuss the analysis and stability of a family of cross-diffusion boundary value problems with nonlinear diffusion and drift terms.
We assume that these systems are close, in a suitable sense, to a set of decoupled and linear problems. 
We focus on stability estimates, that is, continuous dependence of solutions with respect to the nonlinearities in the diffusion and in the drift terms.
We establish well-posedness and stability estimates in an appropriate Banach space.  Under additional assumptions we show that these estimates are time independent. 
These results apply to several problems from mathematical biology; they allow comparisons between the solutions of different models \emph{a priori}.  
For specific cell motility models from the literature, we illustrate the limit of the stability estimates we have derived numerically, and we  document  the behaviour of the solutions for extremal values of the parameters.
\end{abstract}

\subjclass{35K55, 35B30, 35Q92, 65M15}
\keywords{Cross diffusion, Continuous dependence, Quasilinear parabolic systems}
\maketitle

\section{Introduction}

\subsection{Background and motivation} \label{sec:motivation}

In this paper we analyse a class of nonlinear cross-diffusion systems of PDEs which model multi-species populations in presence of short-range interactions between individuals.
We assume that these systems are close, in a suitable sense, to decoupled sets of linear parabolic evolution problems. Such problems arise in many applications 
in mathematical biology, such as chemotactic cell migration, 
ion transport through cell membranes, and spatial segregation in interacting species. 
The strength of the interactions (and therefore of the nonlinear terms) is quantified 
with a small parameter $\epsilon$, so that when $\epsilon = 0$ the system becomes diagonal and linear. 
The biological justification 
for these models comes from weakly-interacting species, whereby interactions between populations (such as 
excluded-volume or chemotactic interactions) are present but are not dominant over the isolated species behaviour. 

The cross-diffusion systems we are interested in have the form
\begin{subequations}
	\label{eq:cross-diff_general}
\begin{equation}
	\partial_t u - \div \left [\mathfrak D(t,x,u) \nabla u - \mathfrak F(t,x,u) u \right] = 0 , \quad \text{in} \quad \Omega, t>0,
\end{equation}
with boundary and initial conditions
\begin{alignat}{2}
	\left [\mathfrak D(t,x,u) \nabla u - \mathfrak F(t,x,u) u \right] \cdot \nu &= 0, &\quad &\text{on} \quad \partial \Omega, t>0,\\
	u(0,\cdot) &= u^{0},& &\text{in} \quad \Omega,
\end{alignat}
\end{subequations}
where $\Omega$ is a smooth, bounded, and connected domain in $\mathbb R^d$ ($d= 1, 2, 3$), $\nu$ denotes 
the outward normal on $\partial \Omega$, and $u = (u_1, \dots, u_m)$ is the vector of densities of each species.
The divergence $\div$ and gradient $\nabla$ represent derivatives with respect to the $d$ spatial variables. 
Here $\mathfrak D(t, x, u)$ and $\mathfrak F(t,x,u)$ are $m\times m$ matrices of diffusion tensors and drift vectors, 
respectively (see \eqref{eq:1011} for further details).  In particular, the entries of the diffusion 
tensor  $\mathfrak D$ may be scalars in the case of isotropic diffusion, or $d\times d$ tensors in the case of anisotropic diffusion. 
The drift matrix elements $\mathfrak F_{ij}$ are $d-$dimensional vectors. In our class of cross-diffusion systems, 
the matrices $\mathfrak D$ and $\mathfrak F$ are close to  matrices that are diagonal and independent of $u$, that is, they can be written in the form
\begin{equation} \label{eq:close_diag}
\begin{aligned}
		\mathfrak D(t,x, u ) &= \mathfrak D^{(0)}(t,x) + \epsilon \mathfrak D^{(1)}(t,x,u) + O(\epsilon^2), \\
		\mathfrak F(t,x, u ) &= \mathfrak F^{(0)}(t,x) + \epsilon \mathfrak F^{(1)}(t,x,u) + O(\epsilon^2),
\end{aligned}
\end{equation}
where $\epsilon$ is a small parameter.

The focus of this paper is to study the stability of the solutions to \eqref{eq:cross-diff_general} under perturbations of order $\epsilon$. 
We establish that the solutions depend continuously on the nonlinearities $\mathfrak D^{(1)}$ and $\mathfrak F^{(1)}$ for $\epsilon$ small enough.
The cross-diffusion model \eqref{eq:cross-diff_general} is a non-linear system, and this combines two types of difficulties, namely the non-linearity
and the fact that fully coupled parabolic systems of equations do not enjoy, in general, the same smoothness
properties as parabolic equations (see, for example, \cite[Chap. 9]{giaquinta_martinazzi} , and  \cite{giaquinta1982partial}).
Our results are detailed in Proposition \ref{prop:315} and Theorem \ref{thm:297}. 
They are quantitative, in the sense that we provide a bound on $\epsilon$ below which our perturbation result applies.
The novelty of our analysis consists in the unified approach to the study of regularity and stability properties in ``strong'' 
Sobolev norms for a relatively wide class of nonlinear cross-diffusion systems.

Our stability estimate uses the underlying regularity of the system,
which, as we will see, it inherits from the leading order model, consisting of decoupled linear evolution equations. We show
that for small perturbations at least some of the regularity is preserved and, using a fixed point argument,
we deduce a stability estimate with respect to the nonlinearities of the model.

Similar results concerning nonlinear systems where interactions between species (or components) are limited to lower order term (so-called weakly coupled systems) are 
available in the work of Camilli and Marchi \cite{camilli2012continuous}. They extend the results available for scalar equations in terms of continuous 
dependence estimates in the $\sup$ norm using the doubling variable method \cite{kruzkov1970} and viscosity solutions. Their results do not apply
to fully coupled systems with cross diffusion present such as the ones we are considering. 
Continuous dependence for fully coupled quasilinear systems was studied by  Cannon, Ford and Lair \cite{cannon1976quasilinear}. They established existence and uniqueness,
following arguments of Ladyzhenskaya, Solonnikov, and Ural'tseva \cite{ladyzhenskaia1988linear} in larger Sobolev spaces (weaker norms).
They derive stability estimates under additional integrability properties assumptions for the gradients. 
We establish existence and uniqueness in stronger norms, removing the need of additional regularity assumptions. 

There are several models, especially in mathematical biology, that fit into the class of systems \eqref{eq:cross-diff_general} and \eqref{eq:close_diag}. 
This is the case for models describing the transport of cells or ions while accounting for the finite-size of particles \cite{Bruna:2012wu,Burger:2010gb,Simpson:2009gi,PerthameBook2015}. 
These models were derived from stochastic agent-based models assuming that the concentration of cells or ions is not too large, so that the transport dominates over the 
finite-size interactions between cells or ions. The diffusion and drift matrices become density-dependent due to the interactions, but this correction is small since it 
scales with the excluded volume in the system. Below we present three of such models, and show how they fit into our framework. 

\begin{xmpl}[Random walk on a lattice with size exclusion] \label{exa:model2} A cross-diffusion model for two interacting species was employed to describe the motility of 
biological cells by Simpson et al. \cite{Simpson:2009gi} or ion transport by Burger et al. \cite{Burger:2010gb}. The  models were derived assuming that particles are 
restricted to a regular square lattice and undergo a simple exclusion random walk, in which a particle can only jump to a site if it is presently unoccupied. In order to 
obtain a continuum model such as \eqref{eq:cross-diff_general} from these so-called lattice-based	 models, it is generally assumed that the occupancies of adjacent 
sides are independent, so that the jumping probabilities take a simple form and do not require correlation functions \cite{Burger:2010gb,Simpson:2009gi}. Clearly, such an 
approximation is poor when the overall occupancy of the lattice is high.
As a result, these models are generally considered valid for low-lattice occupancies. 

The models in \cite{Burger:2010gb,Simpson:2009gi} consider two species of equal size, whose diameter is given by the lattice spacing $\varepsilon$, that undergo a random 
walk with isotropic diffusion $D_i$ and external potential $V_i(x)$, for $i = 1, 2$ (the jumping rates increase with $D_i$ and the jumps are biased in the direction 
of $-\nabla V_i(x)$). There are $N_1$ particles of the first species, and $N_2$ of the second species. Under these assumptions, a cross-diffusion model of the form 
\eqref{eq:cross-diff_general} is obtained, where the population densities $u_1(t,x)$ and $u_2(t,x)$ represent the probability that a particle from first or second species 
respectively is at $x\in \Omega$ at time $t$.  The diffusion and drift matrices are given by \cite{Burger:2010gb}
\begin{subequations} \label{eq:model2} 
\begin{equation}
\mathfrak D(u)=\begin{pmatrix}D_1(1- \epsilon \bar N_2 u_2) & \epsilon D_1  \bar N_2 u_1 \\
\epsilon D_2 \bar N_1 u_2 & D_2(1-\epsilon  \bar N_1 u_1)
\end{pmatrix}, \label{eq:diff_matrix2}
\end{equation}
\begin{equation}\label{ex:model2_drift}
\mathfrak F(u)=\begin{pmatrix} -\nabla V_1(1-\epsilon  \bar N_1 u_1) & \epsilon  \bar N_2 u_1 \nabla V_1 \\
\epsilon  \bar N_1 u_2 \nabla V_2 & -\nabla V_2(1-\epsilon  \bar N_2 u_2)
\end{pmatrix},
\end{equation}
\end{subequations}
where $\epsilon = (N_1 + N_2) \varepsilon^d/|\Omega| \ll 1$ represents the total volume fraction of the lattice occupied by particles
 and $ \bar N_i = N_i/(N_1 + N_2)$.
We have written \eqref{eq:model2}  in a form consistent with our notations, which differ slightly from those used in \cite{Burger:2010gb}.
Global existence for such model was shown in \cite{Chen:2006kd}.
In that paper, as in most works using lattice-based models, the continuum model is written in terms of the volume concentrations $\hat u_i$,
so that the mass of $\hat u_i$ equals the total volume occupied by species $i$ (that is, $\int_\Omega \hat u_i(t,x) {\mathrm{d}} x = N_i \varepsilon^d/|\Omega|$).
We write \eqref{eq:model2} in terms of probability densities $u_i$, which implies that $\int_\Omega u_i {\mathrm{d}} x = 1$.
The two quantities are related by the identity $\hat u_i =  \bar N_i \epsilon u_i$.
The potentials appearing in \eqref{ex:model2_drift}, $V_i$, are not rescaled  by the diffusion coefficient as it is done in \cite{Burger:2010gb}.
The number of species can take any values provided that $\epsilon$, is small. 
The matrices in \eqref{eq:model2} are of the form \eqref{eq:close_diag} that we consider in this paper.
There are also other lattice-based models that fit well into such framework, such as that derived by Shigesada et al.
\cite{Shigesada:1979tq} to describe spatial segregation of interacting animal populations. 
\end{xmpl}

\begin{xmpl}[Brownian motion with size exclusion] \label{exa:model1} A cross-diffusion model for two interacting species of diffusive particles was obtained 
by Bruna and Chapman for $d=2,3$ in \cite{Bruna:2012wu}, starting from a system with two types of Brownian hard spheres. The population densities $u_i(t,x)$, $i=1,2$, 
represent the probability that a particle of species $i$ is at $x\in \Omega$ at time $t$, and so $\int_\Omega u_i(t,x) {\mathrm{d}} x = 1$. The model assumes 
there are $N_i$ particles of species $i$, of diameter $\varepsilon_i$ and isotropic diffusion constant $D_i$. The position $X_i$ of each particle in species $i$ 
evolves in time according to the stochastic differential equation
\begin{equation}
	\mathrm{d} X_i(t) = \sqrt{2D_i} \mathrm{d}{W}(t) -\nabla V_i(X_i(t)) \mathrm{d}t, \label{eq:sde} 
\end{equation}
where $i = 1$ or 2, and $W$ are independent, $d$-dimensional standard Brownian motions. Reflective boundary conditions are imposed whenever two particles are in 
contact ($\| X_i - X_j \| = (\varepsilon_i + \varepsilon_j)/2$, when $X_i$ and $X_j$ are of type $i$ and $j$, respectively), as well as on the boundary of the domain $\partial\Omega$.

The cross-diffusion model is derived using the method of matched asymptotic expansions under the assumption that the volume fraction of the system is small, or 
equivalently, that $(N_1 \varepsilon_1^d + N_2 \varepsilon_2^d)/|\Omega| \sim \epsilon \ll 1$, where $\epsilon$ is defined as in Example \ref{exa:model2} 
with $\varepsilon = (\varepsilon_1+\varepsilon_2)/2$. When the number of particles in each species is large, the cross-diffusion model in \cite{Bruna:2012wu} is 
of the form \eqref{eq:cross-diff_general}, with diffusion matrix
\begin{subequations} \label{model1}
\begin{equation}
\mathfrak D(u)=\begin{pmatrix}D_1(1+\epsilon a_1 u_1-\epsilon  c_1 u_2) & \epsilon D_1  b_1 u_1 \\
\epsilon D_2 b_2 u_2 & D_2(1+\epsilon  a_2 u_2 -\epsilon  c_2 u_1)
\end{pmatrix}, \label{eq:diff_matrix}
\end{equation}
and drift matrix 
\begin{equation}
\mathfrak F(u)=\begin{pmatrix} -\nabla V_1 & \epsilon c_1 \nabla( V_1- V_2)u_1 \\
\epsilon c_2 \nabla ( V_2- V_1 )u_2 & -\nabla V_2
\end{pmatrix}.\label{eq:drift_matrix}
\end{equation}
\end{subequations}
The parameters $a_i, b_i, c_i$ ($i=1,2$) are all positive numbers that depend on the problem dimension, particle sizes, numbers, and relative diffusion 
coefficients (see specific values in Section \ref{sec:numerics}). Model \eqref{model1} also fits into the form \eqref{eq:close_diag}, with $\epsilon= 0$ when 
particles are non-interactive (point particles) and evolve according to two decoupled linear drift-diffusion equations. 
\end{xmpl}

\begin{xmpl}[Asymptotic gradient-flow structures]\label{ex:model3}
Certain cross-diffusion systems possess a formal gradient-flow structure, that is, they can be formulated as
\begin{equation}
	\label{grad_flow_general}
	\partial_t u - \nabla\cdot \left ( M \nabla \frac{\delta E}{\delta u} \right) = 0, 
\end{equation}
where $M \in \mathbb R^{m\times m}$ is known as mobility matrix and $\delta E/\delta u$ is the variational derivative of the entropy (or free energy) function $E[u]$. 
While the underlying microscopic model \eqref{eq:sde} of Example \ref{exa:model1} has a natural entropy, in \cite{Bruna:2016cm} it was noted that model \eqref{model1} 
does not have an obvious gradient-flow structure, but that it is close to one that does have such convenient structure. More specifically, consider the following entropy
\begin{subequations}
	\label{eq:grad_flow1}
\begin{equation}
	E_{\epsilon}[u] = \int_{\Omega} \bigg [ u_1\log u_1 + u_2\log u_2 + u_1 \frac{V_1}{D_1} + u_2 \frac{V_2}{D_2} 
	+\frac{\epsilon}{2} \left(  a_1 u_1^2 +2 a_{12} u_1 u_2 + a_2 u_2^2\right)  \bigg ] {\mathrm{d}} x,\label{eq:entropy_general}
\end{equation}
with $ a_{12} = (d-1) ( c_1 +  c_2)$, 
and the mobility matrix 
\begin{equation}
M_\epsilon(u)=\begin{pmatrix}D_1 u_1(1-\epsilon c_1 u_2) & D_1 c_2\epsilon u_1 u_2\\
D_2 c_1 \epsilon u_1 u_2 & D_2 u_2(1- c_2\epsilon u_1)
\end{pmatrix}.\label{eq:mobility_general}
\end{equation}
\end{subequations}
The cross-diffusion system \eqref{eq:cross-diff_general} with diffusion and drift matrices \eqref{model1} and $N_1 = N_2$\footnote{In \cite{Bruna:2016cm} the more general 
case when $N_1 \ne N_2$ was also considered, by writing the system in terms of number densities $N_i u_i$.}, can be rewritten as 
\begin{equation}
\partial_{t}u = \nabla\cdot\left (M_\epsilon \nabla \frac{\delta E_\epsilon }{\delta u} -\epsilon^{2}G\right),\label{gradflow_generalasy}
\end{equation}
where $G = G(u, \nabla u)$ (see more details in Section \ref{sec:numerics}). In particular, the discrepancy between the system in Example \ref{exa:model1} and 
the gradient-flow induced by \eqref{eq:grad_flow1} is of order $\epsilon^{2}$, an order higher than that of the model.\footnote{Systems \eqref{eq:cross-diff_general}-\eqref{model1} 
and \eqref{grad_flow_general}-\eqref{eq:grad_flow1} are in fact identical when both species have the same particle sizes, $\varepsilon_1 = \varepsilon_2$, 
and diffusivities, $D_1 = D_2$, since $G$ vanishes in that particular case.} Does this legitimise the use of \eqref{eq:grad_flow1} as a gradient-flow structure
of the system? Having a formal gradient-flow structure can facilitate the analysis of cross-diffusion models \cite{jungel2015boundedness}. 
The gradient-flow model \eqref{grad_flow_general}-\eqref{eq:grad_flow1} was studied in \cite{Bruna:2016cm}; stability, uniqueness of the stationary solutions, 
and a global-in-time existence result was shown.
\end{xmpl}

It is natural to ask whether the approximation argument in Example \ref{ex:model3} can be made rigorous, and, more generally if minor changes in the models can be safely ignored. 
For instance, given a two-species biological system, does it matter if we choose a lattice-based model (like in Example \ref{exa:model2}), or an off-lattice model
(like in Example \ref{exa:model1} with equal particle number, size, diffusivity, etc.)?  
If so, can we quantify the differences? Lattice-based approaches have become very common, as they offer a simple way to derive 
continuum PDE models. They can be unrealistic since most biological transport processes modelled by these  are not constrained 
on a lattice \cite{Plank:2012fa}. Nevertheless, if one is solely interested in the population-level behaviour of the system, is it worth using a more realistic off-lattice model?
When is the even simpler model (linear advection-diffusion) sufficiently accurate? 
The aim of this paper is to answer these questions and quantify the differences between models of the form \eqref{eq:cross-diff_general}.

\subsection{Outline of the results }

As we are working with systems of equations, we use different indices to refer to the ambient space variables and the component or species number.
Greek indices $1\leq\alpha,\beta\leq d$ refer to directions in the ambient space, $\mathbb{R}^{d}$, for $d=1,2,3$. Latin indices $1\leq i,j\leq m$
are used to refer to the species number.
The domain $\Omega$ where the problem is formulated is bounded, connected and of class  $C^2$ in $\mathbb{R}^{d}$. The outward normal on $\partial\Omega$ is written $\nu$.

The parabolic models we consider are weak formulations of problems of the form
\begin{align}\label{eq:1091}
\begin{aligned}
	\partial_{t}u_{i}-\partial_{\alpha}\left[\mathfrak D_{ij}^{\alpha\beta}(t,x,u)\partial_{\beta}u_{j} - \mathfrak F_{ij}^{\alpha}(t,x,u)u_{j}\right] & =  0 &\text{in }& \Omega,\\
\left[ \mathfrak D_{ij}^{\alpha\beta}(t,x,u)\partial_{\beta}u_{j} - \mathfrak F_{ij}^{\alpha}(t,x,u)u_{j}\right]\cdot\nu_{\alpha} & =  0 &\text{on }& \partial\Omega,\\
u(0,\cdot) & = u^{0} &\text{in }& \Omega,
\end{aligned}
\end{align}
for $1\leq i \leq m,$.  The Einstein summation convention is used, that is, repeated indices are implicitly summed. 

Our main result is a stability estimate for cross-diffusion systems
that are close to diagonal, decoupled, linear diffusion problems. Our reference problem
will be the weak formulation of  
\begin{align}\label{eq:ModelZero}
\begin{aligned}
	\partial_{t}u_{i}-\partial_{\alpha}\left[ D_{i}^{\alpha\beta}(t,x)\partial_{\beta}u_{i} - F_{i}^{\alpha}(t,x)u_{i}\right] &=  0 &\text{in }&\Omega,\\
\left[D_{i}^{\alpha\beta}(t,x)\partial_\beta  u_i - F_{i}^{\alpha}(t,x)u_{i}\right]\cdot \nu_{\alpha} &=  0 &\text{on }&\partial\Omega, \\
u(0,\cdot) &=  u^{0} &\text{in }& \Omega,
\end{aligned}
\end{align}
The initial datum  $u^{0}$  in \eqref{eq:1091} and \eqref{eq:ModelZero} belongs to $H^{2}(\Omega)$. Note that throughout the paper we 
write $H^2(\Omega)$ for $H^2(\Omega; \mathbb R^m)$, and similarly for other spaces.

Compared to the general system \eqref{eq:1091}, in \eqref{eq:ModelZero} we
have specified that $\mathfrak D_{ij}= \mathfrak F_{ij}=0$ if  $ i\neq j$,
and $\mathfrak D$ and $\mathfrak F$ do not depend on $u$.
In Examples \ref{exa:model2}, \ref{exa:model1}, and \ref{ex:model3}, the reference problem
corresponds to the case $\epsilon=0$, with $D_{i}^{\alpha \beta}(x,t)=\delta_{ \alpha  \beta}D_i$ and $F_i= -\nabla V_i$.
We allow time and space variations of the diffusion coefficients as it does not affect the analysis.
We could also have safely included lower-order terms, but it would have resulted
in somewhat longer and  relatively routine developments. Additionally such terms do not appear in the three examples of interest.

System \eqref{eq:ModelZero} is strongly parabolic, that is, there exist a positive constant $\lambda$
such that for every $t\in[0,\infty)$, $x\in\Omega$ and $\xi\in\mathbb{R}^{d}$,
there holds
\begin{equation}\label{eq:D-Elliptic}
D_{i}^{\alpha \beta}(t,x)\xi^{\alpha}\xi^{\beta}\geq\lambda\left|\xi\right|^{2}, \quad i=1,\ldots,m.
\end{equation}
Furthermore, we shall assume that $D$ is symmetric in the space indices $\alpha$ and $\beta$.

We allow perturbations of system \eqref{eq:ModelZero} scaled by a small parameter $\epsilon$. Namely we consider
\eqref{eq:1091} with 
\begin{align}  \label{eq:1011}
\begin{aligned}
\mathfrak D_{ij}^{\alpha\beta}(t,x,u) &=  D_{i}^{\alpha \beta}(t,x)+\epsilon a_{ij}^{\alpha\beta}(t,x)\phi_{ij}^{\alpha\beta}(u),\\
\mathfrak F_{ij}^{\alpha}(t,x,u) &=  F_{i}^{\alpha}(t,x) +\epsilon b_{ij}^{\alpha}(t,x)\psi_{ij}^{\alpha}(u).
\end{aligned}
\end{align}
The variations
of the coefficients $a$ and $b$ are of class $C^2$ in time and space, that is, 
\begin{equation}
\left\Vert \left(a,b\right)\right\Vert _{C^{2}\left([0,\infty)\times\mathbb{R}^{d}\right)}\leq M,\label{eq:LPA2}
\end{equation}
and the dependence on $u$ of the perturbations is also of class $C^2$,
\begin{equation}
\phi,\psi\in C^{2}\left(\mathbb{R}^{m}\right)^{m\times m},  \qquad \phi(0)=\psi(0)=0.\label{eq:LipschitzPerturbationAssumption}
\end{equation}
Furthermore, we assume that $D$
and $F$ satisfy the bound 
\begin{equation}
  \sum_{\alpha,\beta, i}\Vert D_{i}^{\alpha \beta}\Vert _{C^{1}\left([0,\infty)\times\mathbb{R}^{d}\right)}+\sum_{\alpha,i}\Vert F_{i}^{\alpha}\Vert _{C^{1}\left([0,\infty)\times\mathbb{R}^{d}\right)}
      \leq M.\label{eq:LinearBound}
\end{equation}
In the context of biological models, one is often interested in arbitrarily
long behaviour and, in turn, convergence to a steady state. 
Along this line, we prove sharper estimates when the coefficients $D$ and $F$ of the reference problem \eqref{eq:ModelZero} do not depend
on time and $F$ is derived from a potential
(as in Examples \ref{exa:model2}, \ref{exa:model1}, and \ref{ex:model3}). 
In particular, consider the following additional assumption:
\begin{itemize}
\item[\textbf{(H)}]  for each $i\in\{1,\ldots,m\}$, $D_i$ is independent of time and  there exists $V_i $ such that $F_{i} = - D_i \nabla V_i$. 
\end{itemize}

Our estimates will be expressed in terms of the constants appearing in assumptions \eqref{eq:D-Elliptic},
\eqref{eq:LPA2}, \eqref{eq:LipschitzPerturbationAssumption} and \eqref{eq:LinearBound}. More specifically, the following positive-valued functions will appear:
\begin{eqnarray}
L_{i}&:&R\to\left\Vert \left(\phi,\psi\right)\right\Vert _{C^{i}\left(\overline{ B_{R}(0)}\right)} \quad i=0,1,2\label{eq:DefLi},\\
K_0&:& R\to  M\left(5L_{0}\left({C_{S}^{\infty}} \, R\right)+2C_S^2 L_{1}\left({C_{S}^{\infty}}\,R\right)R\right), \label{eq:defKo} \\
K_1&:&R\to  C_S M \left(L_1(R) R +L_2(R) R^2\right),	\label{eq:defK1}\\
K_2 &:& R \to  6 R C_{T / \infty} C_S \max \left(\left(L_0(R) +  L_1(R)R \right), M ( 1+R) \right) , \label{eq:DefK2}
\end{eqnarray}
where ${C_{S}^2}$, $C_S$, and ${C_{S}^{\infty}}$ depend on $\Omega$ and $d$ and are given by \eqref{eq:defCS2}, \eqref{eq:defCSK1}, 
and \eqref{eq:defCSinfty} respectively. The constant $C_{T/\infty}$ determines the dependence on a final time $T>0$ of our estimates and is given by
\begin{equation}\label{eq:ctinfty}
  C_{T/\infty}=
 \left\{ \begin{array}{rl}
  C_T  & \textrm{ when \textbf{(H)} does not apply} \\
  C_\infty & \textrm{ when \textbf{(H)} applies}, 
  \end{array}\right.
\end{equation}
where $C_T$ is given by \eqref{eq:defC6} and depends on $M$, $\Omega$, $L_0$, $L_1$ and $T$ only, and 
$C_\infty$ is specified in \eqref{eq:C6prime} and  it depends on $M$, $\Omega$, $L_0$ and $L_1$ only -- not $T$. 
The upper bound $\epsilon_0$ on the range of values $\epsilon$ allowed will be determined by means of the following function  
\begin{equation}\label{eq:defepsilonzero}
\epsilon_0 : R\to \min \left(\frac{1}{2+2K_0(R)},\frac{1}{ 1+ K_1 (R)}\right).
\end{equation}

Our first result, which is instrumental to our main theorem, provides an existence result and a regularity
estimate for solutions of system \eqref{eq:1091}. 
Given $T>0$, we denote the parabolic cylinder by $Q_{T}=(0,T)\times\Omega$. 

\begin{dfntn}
We name  $W(Q_{T})$ the Banach space of functions with two weak derivatives in space in $L^{2}(\Omega)$ continuously in time, 
and one time derivative in $H^{1}(Q_{T})$, that is, 
$$
W\left(Q_{T}\right)  =\left\{ u\in C\left(\left[0,T\right];H^{2}(\Omega)\right), \partial_{t}u\in H^{1}
\left( Q_T\right) \right\}.
$$
\end{dfntn}

We are now ready to state our first result, concerning existence and uniqueness of solutions of \eqref{eq:1091}.

\begin{prpstn}
\label{prop:315} Assume that hypothesis
\eqref{eq:D-Elliptic},
\eqref{eq:1011},
\eqref{eq:LPA2},
\eqref{eq:LipschitzPerturbationAssumption}
and \eqref{eq:LinearBound}
hold. 
Consider $u^0 \in W(Q_T)$ satisfying the compatibility condition
\begin{equation}\label{eq:compatcondu_0}
\left[\mathfrak D_{ij}^{\alpha\beta}(t,x,u^0)\partial_{\beta}u^0_{j} - \mathfrak F_{ij}^{\alpha}(t,x,u^0)u^0_{j} \right] \cdot\nu=0 \quad \text{on }\partial\Omega,\quad i=1,\ldots,m.  
\end{equation}
Let
\begin{equation}\label{eq:DefY0}
Y_0 =   C_{T/\infty} \| u^{0}\| _{H^{2}(\Omega)}.
\end{equation}
If $\epsilon < \epsilon_0(Y_0)$,  then system \eqref{eq:1091} admits a unique solution $u\in W(Q_{T})$ and
there holds 
\[
\|u\|_{W(Q_{T})}\leq Y_0.
\]
\end{prpstn}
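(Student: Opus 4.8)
The plan is to run a Banach fixed-point argument in $W(Q_T)$, using the decoupled linear problem \eqref{eq:ModelZero} as the ``unperturbed'' operator whose regularity theory — a solution operator bounded with constant $C_{T/\infty}$, and with the $T$-independent bound $C_\infty$ when \textbf{(H)} holds — is inherited by the nonlinear system once $\epsilon$ is small. Concretely, on the closed ball $\mathcal B = \{v\in W(Q_T): \|v\|_{W(Q_T)}\le Y_0\}$ I define a map $\Phi$ by letting $u=\Phi(v)$ be the unique $W(Q_T)$-solution of the diagonal linear problem \eqref{eq:ModelZero} with initial datum $u^0$ and right-hand side $\partial_\alpha g_i^\alpha(v)$ in divergence form, where $g_i^\alpha(v)=\epsilon\big(a_{ij}^{\alpha\beta}\phi_{ij}^{\alpha\beta}(v)\partial_\beta v_j - b_{ij}^\alpha\psi_{ij}^\alpha(v)v_j\big)$ collects all the $\epsilon$-order terms of \eqref{eq:1091} evaluated at $v$. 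A fixed point of $\Phi$ is exactly a solution of \eqref{eq:1091}. Before anything else I would check that $\Phi$ is well defined: that $g(v)$ lies in the source class required by the linear theory (one time derivative in $L^2(Q_T)$ and one spatial derivative in $L^2$), and that the data are compatible — every iterate satisfies $v(0,\cdot)=u^0$, so at $t=0$ the term $g(v)$ reduces to the $\epsilon$-part of $\mathfrak D(0,\cdot,u^0)\nabla u^0-\mathfrak F(0,\cdot,u^0)u^0$, and hence the compatibility condition \eqref{eq:compatcondu_0} of the nonlinear problem is precisely what is needed for the frozen linear problem to have a solution in $W(Q_T)$ rather than in a space with weaker time regularity.

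Next I would prove that $\Phi$ maps $\mathcal B$ into itself. The homogeneous part of the linear estimate contributes $C_{T/\infty}\|u^0\|_{H^2(\Omega)}=Y_0$, and the source part contributes $C_{T/\infty}$ times a suitable norm of $g(v)$. Here I would use $\phi(0)=\psi(0)=0$ together with $\phi,\psi\in C^2$ and the Sobolev embeddings $H^2(\Omega)\hookrightarrow L^\infty(\Omega)\cap W^{1,6}(\Omega)$ and $H^1(Q_T)\hookrightarrow L^4(Q_T)$, valid for $d\le 3$, to control $\|\phi(v)\|_{L^\infty}$, $\|\nabla\phi(v)\|$, and the time derivatives of the products building $g(v)$ by $L_0$ and $L_1$ evaluated at $C_S^\infty\|v\|_{W(Q_T)}$, times $\|v\|_{W(Q_T)}$ and the $C^1$-bound $M$ of $a,b$. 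Assembling these contributions shows $\|g(v)\|$ is bounded by $\epsilon$ times a quantity of the order of $K_0(\|v\|_{W(Q_T)})$; since $\|v\|_{W(Q_T)}\le Y_0$ and $\epsilon<\epsilon_0(Y_0)\le(2+2K_0(Y_0))^{-1}$, this extra term is small enough that $\|\Phi(v)\|_{W(Q_T)}\le Y_0$, so $\Phi(\mathcal B)\subseteq\mathcal B$.

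Then I would show $\Phi$ is a contraction on $\mathcal B$. For $v_1,v_2\in\mathcal B$ the difference $\Phi(v_1)-\Phi(v_2)$ solves the same linear problem with zero initial datum and source $\partial_\alpha(g^\alpha(v_1)-g^\alpha(v_2))$, so by the linear estimate it suffices to bound $\|g(v_1)-g(v_2)\|$. Writing $\phi(v_1)-\phi(v_2)$ and, crucially, $\nabla\phi(v_1)-\nabla\phi(v_2)=(\phi'(v_1)-\phi'(v_2))\nabla v_1+\phi'(v_2)\nabla(v_1-v_2)$ — which is where $L_2$, the $C^2$-norm of $\phi,\psi$, enters through $|\phi'(v_1)-\phi'(v_2)|\le L_2|v_1-v_2|$ — and proceeding as in the previous step, one gets $\|g(v_1)-g(v_2)\|$ bounded by $\epsilon$ times a quantity of the order of $K_1(Y_0)$ times $\|v_1-v_2\|_{W(Q_T)}$, hence $\|\Phi(v_1)-\Phi(v_2)\|_{W(Q_T)}\le\epsilon K_1(Y_0)\|v_1-v_2\|_{W(Q_T)}$. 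Since $\epsilon<\epsilon_0(Y_0)\le(1+K_1(Y_0))^{-1}$, $\Phi$ is a strict contraction on $\mathcal B$, and Banach's fixed-point theorem produces a unique fixed point $u\in\mathcal B$; this $u$ solves \eqref{eq:1091} and satisfies $\|u\|_{W(Q_T)}\le Y_0$. Uniqueness within all of $W(Q_T)$ then follows from a standard energy estimate on the difference of two $W(Q_T)$-solutions combined with Grönwall's inequality — licit because functions in $W(Q_T)$ are bounded on $\overline{Q_T}$ (as $H^2(\Omega)\hookrightarrow L^\infty(\Omega)$ for $d\le 3$), so the nonlinearities $\phi,\psi$ are Lipschitz along the solutions.

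The hard part will be the nonlinear source estimates, specifically the $H^1(Q_T)$-in-time component of the $W(Q_T)$-norm: differentiating a product such as $a\,\phi(v)\,\nabla v$ in time produces the term $a\,\phi'(v)\,\partial_t v\,\nabla v$, in which $\partial_t v$ belongs only to $H^1(Q_T)$ and no better, and closing the $L^2(Q_T)$ bound for this term forces a careful interpolation using parabolic embeddings available only for $d\le 3$; the same care is needed so that the contraction estimate genuinely produces $\|v_1-v_2\|_{W(Q_T)}$ on the right and not a weaker norm. A secondary technical point, flagged above, is tracking the compatibility condition \eqref{eq:compatcondu_0} through the iteration so that each $\Phi(v)$ really lands in $W(Q_T)$. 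Everything else — the self-map and contraction bookkeeping — is then a matter of collecting the constants $C_S$, $C_S^2$, $C_S^\infty$, $M$, $L_0$, $L_1$, $L_2$ into $K_0$ and $K_1$ and reading off the threshold $\epsilon_0$.
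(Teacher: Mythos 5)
Your proposal is correct in outline and rests on the same two pillars as the paper's proof: the $W(Q_T)$ regularity estimate for the decoupled reference problem (Lemma \ref{lem:312}, with the constant $C_{T/\infty}$) and product/Lipschitz estimates for the $\epsilon$-nonlinearity obtained from $H^2(\Omega)\hookrightarrow L^\infty(\Omega)$ and $H^1(\Omega)\hookrightarrow L^4(\Omega)$ (the content of Lemma \ref{lem:FNL} and Lemma \ref{lem:317}); your treatment of the compatibility condition along the iteration and your Gr\"onwall argument for uniqueness beyond the invariant ball are fine. The genuine difference is where the frozen nonlinearity sits. You place all $\epsilon$-terms, evaluated at the previous iterate, in the source of the diagonal problem \eqref{eq:ModelZero}, whereas the paper iterates $v_{n+1}=S(v_n,u^0,0)$, keeping $\mathfrak D(t,x,v_n)$, $\mathfrak F(t,x,v_n)$ as coefficients acting on the \emph{new} unknown, and proves solvability and the bound for that frozen-coefficient problem separately (Lemma \ref{lem:313}, by absorbing $\epsilon P(t,x,v_n,u)$ into the left-hand side). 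The two maps have the same fixed points and need essentially the same estimates, but the paper's packaging is reused verbatim in the stability argument (Proposition \ref{prop:perturb-gap-linear}), and its contraction source contains only the differences $\phi(v_n)-\phi(\tilde v_n)$, $\psi(v_n)-\psi(\tilde v_n)$, which is why its Lipschitz constant is exactly the $K_1$ of \eqref{eq:defK1}.

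Because the proposition is quantitative, two consequences of your arrangement need care. First, your contraction source also contains $\phi(v_2)\nabla(v_1-v_2)$ and $\psi(v_2)(v_1-v_2)$, so your Lipschitz constant involves $L_0$ as well as $L_1,L_2$ and is not literally $K_1$. Second, and more importantly, in your scheme the nonlinear contribution $\epsilon K_0(Y_0)Y_0$ is multiplied by the solution-operator constant $\tfrac12 C_{T/\infty}$ before it is compared with $Y_0$, so ball invariance and contraction require a condition of the form $\epsilon\, C_{T/\infty}\, K_0(Y_0)\le 1$ (and its $K_1$-analogue), which is not implied by $\epsilon<\epsilon_0(Y_0)$ as defined in \eqref{eq:defepsilonzero}; when \textbf{(H)} fails, $C_T$ grows with $T$ and your admissible range of $\epsilon$ shrinks with the time horizon. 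As written, your argument therefore yields the qualitative statement but not the stated threshold. To recover the proposition with $\epsilon_0(Y_0)$ as in \eqref{eq:defepsilonzero}, either rearrange as the paper does, freezing the coefficients on the new unknown and absorbing the perturbation so that the smallness enters through the factor $1-\epsilon K_0$ as stated in Lemma \ref{lem:313}, or accept a redefined, $C_{T/\infty}$-dependent threshold.
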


\begin{rmrk}
Any compatible initial data in $H^{2}(\Omega)$ is allowed, provided
$\epsilon$ is small enough.  Note that the compatibility condition \eqref{eq:compatcondu_0} holds for any initial data 
compactly supported  in $\Omega$.
All $\epsilon$ within the range $[0, \epsilon_0(Y_0))$ are allowed, and the solution $u$ is bounded linearly by its initial condition.  
When assumption \textbf{(H)} holds, the solution is bounded for all times. 

Our result holds for space dimension $d=1, 2$ and $3$, but not above. 
Two embeddings are used in our proofs: $L^4(\Omega) \subset H^1(\Omega)$, which does not hold when $d\geq5$, and  $L^\infty(\Omega)\subset H^2(\Omega)$, which does not hold when $d\geq4$. 
\end{rmrk}

Our purpose is to establish a stability result under perturbations. Therefore
we consider a second problem with $\mathfrak D$ and
$\mathfrak F$ replaced by
\begin{align} \label{eq:1011-1}
\begin{aligned}
	\widetilde {\mathfrak D}_{ij}^{\alpha\beta}(t,x,u) &=  D_i^{\alpha \beta}(t,x) +\epsilon\tilde{a}_{ij}^{\alpha\beta}(t,x)\tilde{\phi}_{ij}^{\alpha\beta}(u),\\
\widetilde {\mathfrak F}_{ij}^\alpha (t,x,u) &=  F_{i}^{\alpha}(t,x) + \epsilon\tilde{b}_{ij}^{\alpha}(t,x)\tilde{\psi}_{ij}^{\alpha}(u),
\end{aligned}
\end{align}
where $\tilde{a},$ $\tilde{b}$, $\tilde{\phi}$ and $\tilde{\psi}$
satisfy hypothesis \eqref{eq:LPA2}, \eqref{eq:LipschitzPerturbationAssumption} and, without loss of generality, 
\[
\big\Vert (\tilde\phi,\tilde\psi)\big\Vert _{C^{i}(\overline{B_R(0)})}\leq L_i(R) \quad \text{for all} \quad 0\leq R,\quad i=0,1,2.
\]
for $L_i$ defined in \eqref{eq:DefLi}. 
Our main result is as follows.
\begin{thrm}
\label{thm:297} Given $u^{0}, \tilde u^{0} \in H^{2}(\Omega)$ compactly supported in $\Omega$, 
write 
$$
Y_{1}= C_{T/\infty} \max\left(\|u^{0}\|_{H^{2}(\Omega)},\|{\tilde u}^{0}\|_{H^{2}(\Omega)}\right),
$$ 
and assume $\epsilon<\epsilon_{0} (Y_1)$ so that Proposition 
\ref{prop:315} applies for both sets of parameters. Let $u\in W(Q_{T})$
be the solution of \eqref{eq:1091} and $\tilde{u}\in W(Q_{T})$
be the solution of \eqref{eq:1091} with $\mathfrak D$,
$\mathfrak F$ 
and $u^{0}$
are replaced by $\widetilde {\mathfrak D}$ and $\widetilde {\mathfrak F}$ and ${\tilde u}^{0}$, respectively. 
Then the following stability estimate holds:
\begin{equation}
	\left\Vert \tilde{u}-u \right\Vert_{W(Q_{T})} 
    \leq 
    \Gamma_1 \| {\tilde u}^{0}-u^{0}\|_{H^{2}(\Omega)}
 +\epsilon \Gamma_2 \left(\Vert (\tilde{a},\tilde{b})-(a,b)\Vert _{C^{1}([0, \infty)\times\mathbb{R}^{d})}+\Vert (\tilde{\phi},\tilde{\psi})-(\phi,\psi)\Vert_{C^{1}\left(\overline{B_{Y_{1}}(0)}\right)}\right),
 \end{equation}
where
$\Gamma_1 = (1+ K_1(Y_1))C_{T / \infty}$,
$ \Gamma_2 = (1+K_1(Y_1))K_2(Y_1) $ and
$K_{1}$, $K_2$ are non decreasing functions given by \eqref{eq:defK1}, \eqref{eq:DefK2} respectively. They depend on $\Omega$, $M$, $\lambda$, $L_0$, $L_1$ and $L_2$ and $C_{T/\infty}$ only.
\end{thrm}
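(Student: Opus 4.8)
The plan is to write down the linear parabolic system solved by the difference $w := \tilde u - u$, feed it into the regularity theory already available for the reference problem, and close the estimate by absorbing the $O(\epsilon)$ terms thanks to the smallness assumption $\epsilon<\epsilon_0(Y_1)$. First I would subtract the weak formulations of the two problems. Using $\mathfrak D_{ij}^{\alpha\beta}(t,x,v)=D_i^{\alpha\beta}+\epsilon a_{ij}^{\alpha\beta}\phi_{ij}^{\alpha\beta}(v)$ and the analogous expression for the drift, I would telescope the difference of the two nonlinear fluxes in four steps (replace $\partial_\beta u\mapsto\partial_\beta\tilde u$, then $\phi(u)\mapsto\phi(\tilde u)$, then $\phi\mapsto\tilde\phi$, then $a\mapsto\tilde a$, and likewise for the drift). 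This yields the identity
\[
\partial_t w_i-\partial_\alpha\big[D_i^{\alpha\beta}\partial_\beta w_i-F_i^\alpha w_i\big]-\epsilon\,\partial_\alpha\big[a_{ij}^{\alpha\beta}\phi_{ij}^{\alpha\beta}(u)\partial_\beta w_j-b_{ij}^\alpha\psi_{ij}^\alpha(u)w_j\big]=\partial_\alpha\big(\epsilon R_i^\alpha[w]+\epsilon S_i^\alpha\big),
\]
where the left-hand side is the linear operator $\mathcal L_\epsilon$ obtained by linearising the first problem along its own solution $u$; $R^\alpha[w]$ gathers the terms carrying a factor $\phi(\tilde u)-\phi(u)$ or $\psi(\tilde u)-\psi(u)$, which are $O(|w|)$ and will be absorbed; and $S^\alpha$ gathers the genuinely external terms, each a product of one of the coefficient differences $(\tilde a-a)$, $(\tilde b-b)$, $(\tilde\phi-\phi)$, $(\tilde\psi-\psi)$ with a quantity built from $\tilde u$. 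The initial datum is $w(0)=\tilde u^0-u^0$, and since $u^0,\tilde u^0$ are compactly supported in $\Omega$, both $w(0)$ and its associated flux, as well as $S^\alpha(0)$, vanish near $\partial\Omega$ (here $\phi(0)=\psi(0)=0$ is used), so the compatibility condition required for $W(Q_T)$ regularity is satisfied.

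Second, I would apply the linear regularity estimate established earlier for operators of type $\mathcal L_\epsilon$ — the same one underlying Proposition \ref{prop:315}, in its $T$-independent form when hypothesis \textbf{(H)} holds — which gives, for $\epsilon$ small enough that $\mathcal L_\epsilon$ stays uniformly parabolic with controlled coefficients,
\[
\|w\|_{W(Q_T)}\le C_{T/\infty}\Big(\|\tilde u^0-u^0\|_{H^2(\Omega)}+\big\|\epsilon R[w]+\epsilon S\big\|_{\star}\Big),
\]
where $\|\cdot\|_{\star}$ denotes the norm on divergence-form data for which that estimate was proved.

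Third — the technical core — I would estimate the two source contributions. For $R[w]$, writing $\phi(\tilde u)-\phi(u)=\big(\int_0^1\phi'(u+sw)\,\mathrm ds\big)w$ makes the dependence on $w$ explicit, and differentiating products such as $a(\phi(\tilde u)-\phi(u))\partial_\beta\tilde u$ in space and time brings in $\phi''$, hence $L_2$; combining this with $\|u\|_{W(Q_T)},\|\tilde u\|_{W(Q_T)}\le Y_1$, the embeddings $H^1\hookrightarrow L^4$ and $H^2\hookrightarrow L^\infty$ (valid only for $d\le3$), and the product inequalities behind $C_S$, one obtains $\|\epsilon R[w]\|_{\star}\le \epsilon\,C_{T/\infty}^{-1}K_1(Y_1)\,\|w\|_{W(Q_T)}$. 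The same computations applied to $S$, now with the $C^1$-small coefficient differences in place of the compositions, give $\|\epsilon S\|_{\star}\le \epsilon\,C_{T/\infty}^{-1}K_2(Y_1)\big(\|(\tilde a,\tilde b)-(a,b)\|_{C^1([0,\infty)\times\mathbb R^d)}+\|(\tilde\phi,\tilde\psi)-(\phi,\psi)\|_{C^1(\overline{B_{Y_1}(0)})}\big)$, which is precisely where the combinatorial constant $K_2$ comes from. Plugging both bounds into the linear estimate and recalling that $\epsilon<\epsilon_0(Y_1)\le(1+K_1(Y_1))^{-1}$, so that $\epsilon K_1(Y_1)<1$ and $\tfrac{1}{1-\epsilon K_1(Y_1)}<1+K_1(Y_1)$, I would absorb $\epsilon K_1(Y_1)\|w\|_{W(Q_T)}$ into the left-hand side; the geometric factor then produces exactly $\Gamma_1=(1+K_1(Y_1))C_{T/\infty}$ and $\Gamma_2=(1+K_1(Y_1))K_2(Y_1)$.

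I expect the main obstacle to be the bookkeeping in the third step: verifying that $R[w]$ and $S$ actually belong to the space underlying $\|\cdot\|_{\star}$ with the stated dimension-dependent constants requires careful use of the $d\le3$ Sobolev embeddings, the algebra property of $H^1\cap L^\infty$, and the chain rule up to second order for the nonlinear compositions $\phi(u),\psi(u)$ — and one must track precisely which terms carry a factor of $\|w\|_{W(Q_T)}$, so that exactly those (and no others) are moved to the left-hand side. Everything else is routine given the linear theory and Proposition \ref{prop:315}.
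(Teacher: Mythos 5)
Your proposal is correct, and it reaches the stated constants, but it is organised differently from the paper. The paper never writes an equation for the difference $w=\tilde u-u$ directly: it compares the two Picard iterates $v_{n+1}=S(v_n,u^0,0)$ and $\tilde v_{n+1}=\tilde S(\tilde v_n,\tilde u^0,0)$ used to construct the solutions in Proposition \ref{prop:315}, via Proposition \ref{prop:perturb-gap-linear}, which splits $\tilde S(\tilde\yply,\tilde u^0,0)-S(\yply,u^0,0)$ into a data term (Lemma \ref{lem:313}, giving $C_{T/\infty}$), a frozen-argument term (Lemma \ref{lem:317}, giving $\epsilon K_1$), and a coefficient-difference term represented as $\epsilon\tilde S(\tilde\yply,0,\tilde g)$ (giving $\epsilon K_2$); the recursion $x_{n+1}\le A+\epsilon K_1 x_n$ is then passed to the limit and the factor $1+K_1$ appears exactly as in your absorption, since $\epsilon<\epsilon_0(Y_1)\le(1+K_1(Y_1))^{-1}$. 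Your route instead subtracts the two PDEs, views $w$ as the solution of the linear problem with coefficients frozen at $u$ (which is admissible in Lemma \ref{lem:313} since $\|u\|_{W(Q_T)}\le Y_0\le Y_1$ and $\epsilon K_0(Y_1)<1$) and with sources $\epsilon R[w]+\epsilon S$, and then absorbs $\epsilon K_1\|w\|_{W(Q_T)}$; this is legitimate because Proposition \ref{prop:315} already guarantees $w\in W(Q_T)$, so the absorbed quantity is finite, and the compact support of $u^0,\tilde u^0$ together with $\phi(0)=\psi(0)=0$ gives the compatibility condition for the difference problem, as you note. The two arguments use the same three analytic ingredients (the linear $W(Q_T)$ estimate of Lemma \ref{lem:312}--\ref{lem:313}, the Lipschitz estimate on $\yply\mapsto\phi(\yply),\psi(\yply)$ producing $K_1$, and the coefficient-difference estimate producing $K_2$), so the bookkeeping you flag as the main burden is precisely the content of Lemma \ref{lem:317} and Proposition \ref{prop:perturb-gap-linear}; what your version buys is a shorter, iteration-free derivation, while the paper's version keeps every application of the linear theory at the level of the already-analysed frozen-coefficient operators and lets the constants emerge from the limit of the iteration rather than from an a priori absorption on the unknown $w$.
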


Theorem \ref{thm:297} implies, for example, that we can control the differences between the solutions of the models in Examples \ref{exa:model2} 
and \ref{exa:model1}, by considering the differences in their respective diffusion and drift matrices, which appear at order $\epsilon$. Similarly, we can also use this 
result to predict the error we will make by approximating model \eqref{model1} in Example \ref{exa:model1} as the gradient flow in Example \ref{ex:model3}. Since the differences
between models appear at order $\epsilon^2$ in this case, provided the initial data are equal, the error will be bounded and of order $\epsilon^2$ for all times (see Section \ref{sec:numerics}). 

\begin{rmrk}
 In Proposition~\ref{prop:315} the compatibility condition \eqref{eq:compatcondu_0} appears, 
 which is automatically satisfied by compactly supported initial data as we have assumed in Theorem~\ref{thm:297}. 
 However, Theorem~\ref{thm:297} also holds (with the same proof) provided that
 $u^0$ and $\tilde{u}^0$ satisfy the following four conditions:
\begin{alignat*}{3}
\left[\mathfrak D_{ij}^{\alpha\beta}(t,x,u^0)\partial_{\beta}u^0_{j} - \mathfrak F_{ij}^{\alpha}(t,x,u^0)u^0_{j} \right] \cdot\nu &=0 &\quad &\text{on }\partial\Omega,&\quad i&=1,\ldots,m,  \\
\left[\mathfrak D_{ij}^{\alpha\beta}(t,x,\tilde u^0)\partial_{\beta}\tilde{u}^0_{j} - \mathfrak F_{ij}^{\alpha}(t,x,\tilde u^0)\tilde{u}^0_{j} \right] \cdot\nu &=0 &&\text{on }\partial\Omega, & i & =1,\ldots,m,  \\
\left[\widetilde {\mathfrak D}_{ij}^{\alpha\beta}(t,x,u^0)\partial_{\beta}u^0_{j} -  \tilde{\mathfrak F}_{ij}^{\alpha}(t,x,u^0)u^0_{j} \right] \cdot\nu & =0& & \text{on }\partial\Omega,& i&=1,\ldots,m,  \\
\left[\widetilde {\mathfrak D}_{ij}^{\alpha\beta}(t,x,\tilde{u}^0)\partial_{\beta}\tilde{u}^0_{j} - \tilde{\mathfrak F}_{ij}^{\alpha}(t,x,\tilde{u}^0)\tilde{u}^0_{j} \right] \cdot\nu & =0 && \text{on }\partial\Omega, & i &=1,\ldots,m.  
 \end{alignat*}
We choose to write the result for compactly supported initial data to improve readability.
\end{rmrk}

\section{Proof of Proposition \ref{prop:315} and Theorem \ref{thm:297}}


In Lemma \ref{lem:313}, we derive an estimate for a linearisation of system \eqref{eq:1091}. 
\begin{lmm}
\label{lem:313}  
Assume that $\mathfrak D$ and $\mathfrak F$ are given by
\eqref{eq:1011}, and that $a,b$ and $\phi, \psi$ satisfy \eqref{eq:LPA2} and \eqref{eq:LipschitzPerturbationAssumption} respectively. Suppose that  $\yply \in W(Q_{T})$ satisfies
\begin{equation}\label{eq:epsilonbound}
\epsilon K_0 \left(\left\Vert \yply\right\Vert _{W\left(Q_{T}\right)}\right)<1,
\end{equation}
where $K_0$ is given by \eqref{eq:defKo}. 

For all $u^{0}\in H^{2}(\Omega)$ and $f\in C([0,T];H^1\left(Q_{T}\right))\cap H^1(0,T;L^2(\Omega)) $ such that
\begin{equation}
\left[\mathfrak D_{ij}^{\alpha\beta}(t,x,\yply)\partial_{\beta}u^0_{j} - \mathfrak F_{ij}^{\alpha}(t,x,\yply)u^0_{j}+f^\alpha_i(t=0)\right] \cdot\nu=0 \quad \text{on }\partial\Omega,\quad i=1,\ldots,m\label{eq:compatcond1}
\end{equation}
there exists a unique weak solution  $u\in W(Q_{T})$
to the linearised system
\begin{align}\label{eq:parab-linear}
\begin{aligned}
	\partial_{t}u_{i}-\partial_{\alpha}\left[\mathfrak D_{ij}^{\alpha\beta}(t,x,\yply)\partial_{\beta}u_{j} - \mathfrak F_{ij}^{\alpha}(t,x,\yply)u_{j}+f_i^\alpha\right] & =  0 & &\text{in } \mathcal{D}^{\prime}(\Omega),\\
\left[ \mathfrak D_{ij}^{\alpha\beta}(t,x,\yply)\partial_{\beta}u_{j} - \mathfrak F_{ij}^{\alpha}(t,x,\yply)u_{j}+f_i^\alpha\right]\nu_{\alpha} & =  0 & &\text{on } \partial\Omega, \quad i=1,\ldots,m\\
u(0,x) & = u^{0} & &\text{in } \Omega.
\end{aligned}
\end{align}
Furthermore, the solution map 
\begin{equation} \label{solution_map}
S: \left(\yply,u^{0},f\right)\to u\text{, where }u\text{ is the solution of \eqref{eq:parab-linear},}
\end{equation}
 satisfies 
\[
\big \|S(\yply,u^{0},f) \big \|_{W(Q_{T})} \left[1- \epsilon K_0(\left\Vert \yply\right\Vert_{W(Q_{T})}) \right]
\leq \frac{1}{2} C_{T/\infty} \left(\| u^{0}\| _{H^{2}(\Omega)}+\left\Vert f\right\Vert _{ C([0,T];H^1\left(Q_{T}\right))\cap H^1(0,T;L^2(\Omega))}\right),
\]
where $C_{T/\infty}>0$ is given by \eqref{eq:ctinfty} and does not depend on $T$ if \textbf{(H)} holds. 
\end{lmm}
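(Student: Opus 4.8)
The plan is to exploit that, once $\yply$ is frozen, the coefficients $\mathfrak D_{ij}^{\alpha\beta}(t,x,\yply)$ and $\mathfrak F_{ij}^{\alpha}(t,x,\yply)$ in \eqref{eq:parab-linear} are functions of $(t,x)$ only, so the system is \emph{linear} in $u$ and is an $O(\epsilon)$ perturbation of the decoupled reference system \eqref{eq:ModelZero}. Using \eqref{eq:1011} I would rewrite the flux of \eqref{eq:parab-linear} as the reference flux $D_{i}^{\alpha\beta}\partial_{\beta}u_{i}-F_{i}^{\alpha}u_{i}$ plus the remainder
\[
g_{i}^{\alpha}[u] := f_{i}^{\alpha}+\epsilon\, a_{ij}^{\alpha\beta}(t,x)\,\phi_{ij}^{\alpha\beta}(\yply)\,\partial_{\beta}u_{j}-\epsilon\, b_{ij}^{\alpha}(t,x)\,\psi_{ij}^{\alpha}(\yply)\,u_{j},
\]
so that $u$ solves \eqref{eq:parab-linear} with datum $u^{0}$ if and only if it solves \eqref{eq:ModelZero} with source $g[u]$ and datum $u^{0}$. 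Denoting by $\mathcal R$ the solution operator $(w^{0},g)\mapsto w$ of the linear reference problem, I would then obtain $u$ as a fixed point of $\mathcal T(v):=\mathcal R(u^{0},g[v])$ on the affine set $\mathcal A:=\{v\in W(Q_{T}):v(0,\cdot)=u^{0}\}$.

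The first ingredient is the well-posedness and maximal-regularity estimate for \eqref{eq:ModelZero}: for $w^{0}\in H^{2}(\Omega)$ and a source $g\in C([0,T];H^{1}(Q_{T}))\cap H^{1}(0,T;L^{2}(\Omega))$ meeting the conormal compatibility condition $[D_{i}^{\alpha\beta}\partial_{\beta}w^{0}_{i}-F_{i}^{\alpha}w^{0}_{i}+g_{i}^{\alpha}(0)]\cdot\nu=0$, the problem has a unique solution $w=\mathcal R(w^{0},g)\in W(Q_{T})$, with $\mathcal R$ bounded, its datum-to-solution part of norm $\tfrac12 C_{T/\infty}$ and its source-to-solution part of $T$-uniform norm. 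Since \eqref{eq:ModelZero} is decoupled, this is classical $L^{2}$ maximal regularity for a single uniformly parabolic divergence-form equation with $C^{1}$ coefficients (cf.\ \eqref{eq:D-Elliptic}, \eqref{eq:LinearBound}) and conormal data, applied componentwise; the growth in $T$ is recorded in $C_{T}$, and under \textbf{(H)} ($D_{i}$ time-independent, $F_{i}=-D_{i}\nabla V_{i}$) a weighted energy estimate with weight $e^{V_{i}}$ upgrades it to the $T$-independent constant $C_{\infty}$ of \eqref{eq:ctinfty}. I would treat this purely linear estimate as established separately, as it does not involve the nonlinearities $\phi,\psi$.

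The second, and main, ingredient is a multiplier estimate: for $v\in W(Q_{T})$ and $\|\yply\|_{W(Q_{T})}\le R$,
\[
\big\|\epsilon\, a_{ij}^{\alpha\beta}\phi_{ij}^{\alpha\beta}(\yply)\,\partial_{\beta}v_{j}-\epsilon\, b_{ij}^{\alpha}\psi_{ij}^{\alpha}(\yply)\,v_{j}\big\|_{C([0,T];H^{1}(Q_{T}))\cap H^{1}(0,T;L^{2}(\Omega))}\le\epsilon\, K_{0}(R)\,\|v\|_{W(Q_{T})}.
\]
Its proof uses the embeddings $H^{2}(\Omega)\hookrightarrow L^{\infty}(\Omega)$ and $H^{1}(\Omega)\hookrightarrow L^{4}(\Omega)$, valid for $d\le 3$ (resp.\ $d\le 4$), with constants $C_{S}^{\infty}$, $C_{S}^{2}$, $C_{S}$; the bound $\|\yply\|_{L^{\infty}(Q_{T})}\le C_{S}^{\infty}\|\yply\|_{W(Q_{T})}$, so that $\phi(\yply),\psi(\yply)$ and their first derivatives — computed by the chain rule, using $\phi,\psi\in C^{2}$, $\phi(0)=\psi(0)=0$, and $\yply\in W(Q_{T})$ — are bounded on $\overline{B_{C_{S}^{\infty}R}(0)}$ by $L_{0}(C_{S}^{\infty}R)$ and $L_{1}(C_{S}^{\infty}R)$; the bound $\|(a,b)\|_{C^{1}}\le M$ from \eqref{eq:LPA2}; and Leibniz expansions of the spatial gradient and of $\partial_{t}$ of the products, in which the borderline terms $\partial_{\beta}\yply\,\partial_{\gamma}v$ and $\partial_{t}\yply\,\partial_{\beta}v$ are split as $L^{4}_{x}\cdot L^{4}_{x}$ and integrated in $t$ (here $\partial_{t}\yply\in H^{1}(Q_{T})\hookrightarrow L^{2}(0,T;L^{4}(\Omega))$ is used, so $\yply\in W(Q_{T})$ rather than merely $C([0,T];H^{2})$ matters), while $\phi(\yply)\,\partial_{t}\nabla v$ uses $\nabla\partial_{t}v\in L^{2}(Q_{T})$. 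Collecting the Sobolev constants and the numerical factors from the product rules produces exactly the function $K_{0}$ of \eqref{eq:defKo}. This is the step I expect to require the most care, and it is where the restriction $d\le 3$ and the $C^{2}$-smoothness of $\phi,\psi$ are genuinely used.

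With these two ingredients the argument closes. For $v\in\mathcal A$ the trace $g_{i}^{\alpha}[v](0)$ depends on $v$ only through $v(0,\cdot)=u^{0}$, so the compatibility condition needed by $\mathcal R(u^{0},g[v])$ is exactly hypothesis \eqref{eq:compatcond1}; hence $\mathcal T$ is well defined and maps $\mathcal A$ into itself. For $v_{1},v_{2}\in\mathcal A$, $\mathcal T(v_{1})-\mathcal T(v_{2})=\mathcal R(0,g[v_{1}]-g[v_{2}])$, whose source $\epsilon a\phi(\yply)\partial(v_{1}-v_{2})-\epsilon b\psi(\yply)(v_{1}-v_{2})$ vanishes at $t=0$; combining the reference estimate with the multiplier estimate gives $\|\mathcal T(v_{1})-\mathcal T(v_{2})\|_{W(Q_{T})}\le\tfrac12\epsilon K_{0}(\|\yply\|_{W(Q_{T})})\|v_{1}-v_{2}\|_{W(Q_{T})}$, a strict contraction under \eqref{eq:epsilonbound}. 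Banach's fixed point theorem then gives a unique $u\in\mathcal A$ with $\mathcal T(u)=u$, i.e.\ a unique solution of \eqref{eq:parab-linear} in $W(Q_{T})$, defining the map $S$ of \eqref{solution_map}. Applying the reference estimate to $u=\mathcal R(u^{0},g[u])$ and the multiplier estimate to $g[u]-f$ yields $\|u\|_{W(Q_{T})}\le\tfrac12 C_{T/\infty}\|u^{0}\|_{H^{2}(\Omega)}+\tfrac12\|f\|+\tfrac12\epsilon K_{0}(\|\yply\|_{W(Q_{T})})\|u\|_{W(Q_{T})}$, which rearranges (using $1-\epsilon K_{0}\le 1-\tfrac12\epsilon K_{0}$ and $C_{T/\infty}\ge 1$) to the asserted inequality; the $T$-independence under \textbf{(H)} is inherited from that of $C_{T/\infty}$ and of the source-to-solution norm of $\mathcal R$.
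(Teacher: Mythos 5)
Your proposal is essentially the paper's own proof: the same splitting of the flux into the reference flux of \eqref{eq:ModelZero} plus the source $f+\epsilon\,(a\,\phi(\yply)\nabla u-b\,\psi(\yply)u)$, the same linear maximal-regularity input (the paper's Lemma \ref{lem:312}, with the $e^{V}$ change of unknown of Lemma \ref{lem:changeunknown} giving the $T$-independent constant under \textbf{(H)}), and the same multiplier estimate yielding $K_0$ through $H^2(\Omega)\hookrightarrow L^\infty(\Omega)$ and $H^1(\Omega)\hookrightarrow L^4(\Omega)$ (the paper's Lemma \ref{lem:FNL}). The only real deviation is the closing mechanism: you produce the solution by a Banach fixed point for $v\mapsto\mathcal{R}(u^0,g[v])$ on the affine set $\{v(0,\cdot)=u^0\}$ --- correctly noting that the compatibility condition for $g[v](0)$ reduces to \eqref{eq:compatcond1} there --- whereas the paper states the bound as an a priori estimate for the frozen linear system and invokes the Fredholm alternative for existence and uniqueness; the two are interchangeable here. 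One bookkeeping caveat, which your write-up shares with the paper's final display: the source-to-solution part of $\mathcal{R}$ carries the factor $\tfrac12 C_{T/\infty}$, so the contraction/absorption constant is really $\tfrac12 C_{T/\infty}\,\epsilon K_0$ rather than $\tfrac12\epsilon K_0$, and \eqref{eq:epsilonbound} alone makes it small only if $K_0$ is read as incorporating $C_{T/\infty}/2$ (or $\epsilon_0$ is adjusted accordingly); since the paper's proof makes the identical simplification, this does not set your argument apart from theirs. Finally, you take Lemma \ref{lem:312} as ``classical maximal regularity'', while the paper proves it by hand (testing with $\partial_t u-\omega^{-1}\div(A\Phi)$, difference quotients, elliptic regularity); as a plan this is acceptable, since you identify precisely the statement needed, the role of the compatibility condition, and the weighted-energy route to $C_\infty$.
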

The proof of Lemma \ref{lem:313} is in Appendix \ref{sec:appendix}.  This first result has an immediate corollary.
\begin{crllr}
\label{cor:boundedsequence} For any $u^0$ and $\yply$ in $W(Q_T)$, suppose that
\[
\left[\mathfrak D_{ij}^{\alpha\beta}(t,x,\yply)\partial_{\beta}u^0_{j} - \mathfrak F_{ij}^{\alpha}(t,x,\yply)u^0_{j} \right] \cdot\nu=0 \quad \text{on }\partial\Omega,
\]
and 
\[
\epsilon\leq
 \frac{1}{2+2K_0 \left(C_{T/\infty} \| u^{0}\| _{H^{2}(\Omega)} \right)}, \qquad \|\yply\|_{W(Q_{T})} \leq Y_0,
\]
where $K_0$, $C_{T/\infty}$, and $Y_0$ are defined in \eqref{eq:defKo}, \eqref{eq:ctinfty}, and \eqref{eq:DefY0} respectively. Then 
\[
\big \|S(\yply,u^{0},0) \big \|_{W(Q_{T})}< Y_0.
\]
\end{crllr}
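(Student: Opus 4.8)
The plan is to invoke Lemma \ref{lem:313} with vanishing source term $f = 0$, and then simply unwind the quantitative estimate it provides. First I would verify the two hypotheses of the lemma. The compatibility condition \eqref{eq:compatcond1} for the triple $(\yply, u^0, 0)$ is exactly the condition assumed in the statement, so nothing is needed there. For the smallness condition \eqref{eq:epsilonbound}, recall that $K_0$ is non-decreasing in $R$: it is assembled from $L_0$ and $L_1$, which are non-decreasing because enlarging the ball $\overline{B_R(0)}$ can only increase a $C^i$ norm. Hence, using $\|\yply\|_{W(Q_T)} \leq Y_0 = C_{T/\infty}\|u^0\|_{H^2(\Omega)}$ together with the assumed bound on $\epsilon$,
\[
\epsilon \, K_0\big(\|\yply\|_{W(Q_T)}\big) \;\leq\; \epsilon \, K_0(Y_0) \;\leq\; \frac{K_0(Y_0)}{2 + 2 K_0(Y_0)} \;=\; \frac12\cdot\frac{K_0(Y_0)}{1 + K_0(Y_0)} \;<\; \frac12 ,
\]
so in particular \eqref{eq:epsilonbound} holds, and Lemma \ref{lem:313} applies: the solution $u = S(\yply, u^0, 0)$ exists, is unique, and lies in $W(Q_T)$.

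It then remains to read off the bound. With $f = 0$ the estimate in Lemma \ref{lem:313} becomes
\[
\big\|S(\yply, u^0, 0)\big\|_{W(Q_T)} \Big[\,1 - \epsilon K_0\big(\|\yply\|_{W(Q_T)}\big)\Big] \;\leq\; \tfrac12 \, C_{T/\infty} \|u^0\|_{H^2(\Omega)} \;=\; \tfrac12 \, Y_0 .
\]
By the displayed inequality above, the bracket is $> \tfrac12 > 0$, so we may divide by it to obtain
\[
\big\|S(\yply, u^0, 0)\big\|_{W(Q_T)} \;\leq\; \frac{Y_0/2}{\,1 - \epsilon K_0(\|\yply\|_{W(Q_T)})\,} \;<\; \frac{Y_0/2}{1/2} \;=\; Y_0 ,
\]
which is exactly the assertion.

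There is no real obstacle here; the corollary is pure bookkeeping on top of Lemma \ref{lem:313}. The only point deserving an explicit word is the monotonicity of $K_0$, which is what converts the bound on $\epsilon$ — stated in terms of $\|u^0\|_{H^2(\Omega)}$ — into a bound on $\epsilon\,K_0(\|\yply\|_{W(Q_T)})$, where $\yply$ is an a priori different function. The strict inequality in the conclusion comes for free from the strict inequality $\tfrac{K_0(Y_0)}{1+K_0(Y_0)} < 1$, valid since $K_0(Y_0)$ is finite.
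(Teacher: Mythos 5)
Your proof is correct and follows exactly the paper's route: check the hypotheses of Lemma \ref{lem:313} via the monotonicity of $K_0$ and the bound $\|\yply\|_{W(Q_T)}\leq Y_0$, apply the lemma with $f=0$, and divide by the bracket $1-\epsilon K_0(\|\yply\|_{W(Q_T)})>\tfrac12$ to get the strict bound by $Y_0$. The paper's proof is just a terser version of the same bookkeeping.
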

\begin{proof}
Since $K_0$ is a non decreasing function, we obtain
$$
\epsilon K_0\left(\|\yply\|_{W(Q_{T})} \right) \leq 
\frac{K_0 \left(Y_0\right) }
{2+2K_0 \left(Y_0\right)} <\frac{1}{2},
$$
hence \eqref{eq:epsilonbound} is satisfied.
Applying Lemma \ref{lem:313} with $f=0$, we obtain the announced estimate. 
\end{proof}
In a second step, we establish a contraction property.

\begin{lmm}
\label{lem:317} Given $\epsilon>0$, $u^{0}\in H^{2}(\Omega)$, and $\yply, \tilde \yply \in W(Q_{T})$, suppose that on $\partial\Omega$
\[
\left[\mathfrak D_{ij}^{\alpha\beta}(t,x,\yply)\partial_{\beta}u^0_{j} - \mathfrak F_{ij}^{\alpha}(t,x,\yply)u^0_{j} \right]\cdot\nu=0, \qquad 
\left[\mathfrak D_{ij}^{\alpha\beta}(t,x,\tilde\yply)\partial_{\beta}u^0_{j} - \mathfrak F_{ij}^{\alpha}(t,x,\tilde\yply)u^0_{j} \right] \cdot\nu=0.
\]
Suppose also that 
\[
\epsilon\leq \frac{1}{2[1+K_0 (Y_0 )]}, \qquad 
 \max(\|\yply\|_{W(Q_{T})},
\|\tilde\yply\|_{W(Q_{T})}) \leq Y_0,
\]
where $K_0$, $C_{T/\infty}$, and $Y_0$ are defined in \eqref{eq:defKo}, \eqref{eq:ctinfty}, and \eqref{eq:DefY0} respectively. Then we have 
\[
 \big \Vert S (\yply,u^{0},0 )-S(\tilde \yply,u^{0},0) \big \Vert_{W(Q_{T})}\leq\epsilon K_{1}(Y_0)\Vert \yply-\tilde \yply \Vert_{{{W(Q_{T})}}},
\]
with $K_1$ given by \eqref{eq:defK1}. 
\end{lmm}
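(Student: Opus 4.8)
The plan is to reduce the statement to a single application of Lemma~\ref{lem:313}. Write $u=S(\yply,u^{0},0)$ and $\tilde u=S(\tilde\yply,u^{0},0)$ and set $w=u-\tilde u$. Both $u$ and $\tilde u$ are weak solutions of \eqref{eq:parab-linear} with $f=0$, but for the frozen coefficient fields $\yply$ and $\tilde\yply$ respectively; subtracting the two weak formulations and inserting $\pm\bigl(\mathfrak D_{ij}^{\alpha\beta}(t,x,\yply)\partial_{\beta}\tilde u_{j}-\mathfrak F_{ij}^{\alpha}(t,x,\yply)\tilde u_{j}\bigr)$ shows that $w$ solves \eqref{eq:parab-linear} for the coefficient field $\yply$, with zero initial datum $w(0,\cdot)=u^{0}-u^{0}=0$, and with forcing
\[
f_{i}^{\alpha}=\left[\mathfrak D_{ij}^{\alpha\beta}(t,x,\yply)-\mathfrak D_{ij}^{\alpha\beta}(t,x,\tilde\yply)\right]\partial_{\beta}\tilde u_{j}-\left[\mathfrak F_{ij}^{\alpha}(t,x,\yply)-\mathfrak F_{ij}^{\alpha}(t,x,\tilde\yply)\right]\tilde u_{j}.
\]
By \eqref{eq:1011} the two coefficient differences equal $\epsilon\,a_{ij}^{\alpha\beta}(t,x)[\phi_{ij}^{\alpha\beta}(\yply)-\phi_{ij}^{\alpha\beta}(\tilde\yply)]$ and $\epsilon\,b_{ij}^{\alpha}(t,x)[\psi_{ij}^{\alpha}(\yply)-\psi_{ij}^{\alpha}(\tilde\yply)]$, so $f$ carries an explicit factor $\epsilon$ and vanishes when $\yply=\tilde\yply$.

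Next I would check that $(0,f)$ satisfies the hypotheses of Lemma~\ref{lem:313} with coefficient field $\yply$. Since the initial datum is zero, the compatibility condition \eqref{eq:compatcond1} reduces to $f_{i}^{\alpha}(0,\cdot)\,\nu_{\alpha}=0$ on $\partial\Omega$; because $u(0,\cdot)=\tilde u(0,\cdot)=u^{0}$, the trace $f_{i}^{\alpha}(0,\cdot)\nu_{\alpha}$ is exactly the difference of the two boundary expressions that are assumed to vanish in the statement of Lemma~\ref{lem:317}, so it holds. Moreover, the bound $\epsilon\le 1/(2[1+K_{0}(Y_{0})])$, the monotonicity of $K_{0}$ and $\|\yply\|_{W(Q_{T})}\le Y_{0}$ give $\epsilon K_{0}(\|\yply\|_{W(Q_{T})})\le K_{0}(Y_{0})/(2[1+K_{0}(Y_{0})])<1/2$, so \eqref{eq:epsilonbound} is satisfied and $1-\epsilon K_{0}(\|\yply\|_{W(Q_{T})})>1/2$. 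By the uniqueness part of Lemma~\ref{lem:313}, $w=S(\yply,0,f)$, and its estimate (with the $\|u^{0}\|_{H^{2}}$ term absent) yields $\|w\|_{W(Q_{T})}\le C_{T/\infty}\,\|f\|$, where $\|\cdot\|$ denotes the norm on $f$ appearing there.

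It remains to bound $\|f\|$ by a constant multiple of $\epsilon\,\|\yply-\tilde\yply\|_{W(Q_{T})}$ — this is the only part requiring real work. Since $\epsilon\le 1/(2+2K_{0}(Y_{0}))$ and $\|\tilde\yply\|_{W(Q_{T})}\le Y_{0}$, Corollary~\ref{cor:boundedsequence} gives $\|\tilde u\|_{W(Q_{T})}<Y_{0}$, so $\yply,\tilde\yply,\tilde u$ are all valued (pointwise in $Q_T$) in a fixed ball whose radius is controlled by $Y_{0}$ via $H^{2}(\Omega)\hookrightarrow L^{\infty}(\Omega)$. Expanding the space and time derivatives of $f$, every resulting term is a product of: a factor $a$, $b$ or one of their first derivatives, bounded by $M$ through \eqref{eq:LPA2}; a factor of the form $\phi_{ij}^{\alpha\beta}(\yply)-\phi_{ij}^{\alpha\beta}(\tilde\yply)$ or $\partial_{k}\phi_{ij}^{\alpha\beta}(\yply)-\partial_{k}\phi_{ij}^{\alpha\beta}(\tilde\yply)$ (and the analogues for $\psi$), which by the fundamental theorem of calculus is bounded pointwise by $L_{1}$, respectively $L_{2}$, times $|\yply-\tilde\yply|$; and one or two factors drawn from $\tilde u,\nabla\tilde u,\nabla^{2}\tilde u,\partial_{t}\tilde u,\nabla\tilde\yply,\partial_{t}\yply$, each controlled in $L^{2}(\Omega)$ or, after the Sobolev inequality $H^{1}(\Omega)\hookrightarrow L^{4}(\Omega)$, in $L^{4}(\Omega)$. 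The quadratic products produce the term $M L_{1}(Y_{0})Y_{0}$, while the cubic ones — coming from the $\partial_{k}\phi(\yply)-\partial_{k}\phi(\tilde\yply)$ pieces multiplied by $\nabla\tilde\yply$ and $\nabla\tilde u$, and their $\psi$-counterparts — produce $M L_{2}(Y_{0})Y_{0}^{2}$; this check also confirms $f$ lies in the space required by Lemma~\ref{lem:313}, which is where the restriction $d\le 3$ is used. Collecting the product and embedding constants into $C_{S}$ of \eqref{eq:defCSK1} gives $\|f\|$ bounded as in \eqref{eq:defK1}, and combining with the previous paragraph yields $\|w\|_{W(Q_{T})}\le\epsilon K_{1}(Y_{0})\,\|\yply-\tilde\yply\|_{W(Q_{T})}$. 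The main obstacle is precisely this last estimate: not the individual inequalities, which are elementary, but the bookkeeping needed to ensure the cubic products land in $L^{2}$ and that the power of $Y_{0}$ is kept at $Y_{0}^{2}$, so that the final constant is exactly $K_{1}(Y_{0})$ and not something larger.
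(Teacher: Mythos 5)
Your proposal is correct and follows essentially the same route as the paper: you identify $u-\tilde u$ as the image under the solution map $S(\yply,0,\cdot)$ of a forcing carrying the factor $\epsilon$ (the paper writes this as $u-\tilde u=\epsilon S(\yply,0,g)$), verify the compatibility and smallness conditions so that Lemma~\ref{lem:313} applies, and bound the forcing in $C([0,T];H^1)\cap H^1(0,T;L^2)$ via the Lipschitz bounds $L_1$, $L_2$, the bound $M$ on $(a,b)$, and the Sobolev/Gagliardo--Nirenberg embeddings, exactly as in the paper's proof. Your explicit check of the compatibility condition at $t=0$ and the appeal to Corollary~\ref{cor:boundedsequence} for $\|\tilde u\|_{W(Q_T)}\leq Y_0$ are details the paper leaves implicit, but the argument is the same.
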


\begin{proof}
Write $u=S(\yply,u^{0},0)$ and $\tilde u=S(\tilde \yply,u^{0},0)$. We have 
\[
u-\tilde u=\epsilon S\left(\yply,0,g\right)
\]
 where 
\begin{equation}
g_{i}^\alpha=a_{ij}^{\alpha\beta}(t,x)\big[\phi_{ij}^{\alpha\beta}(\yply)
-\phi_{ij}^{\alpha\beta}(\tilde \yply)\big]\partial_{\beta}\tilde u_j+b_{ij}^{\alpha}(t,x)\big[\psi_{ij}^{\alpha\beta}(\yply)-\psi_{ij}^{\alpha\beta}(\tilde \yply)\big]\tilde u_j.
\end{equation}
Noting that
\[
\big|\phi_{ij}^{\alpha\beta}(\yply)-\phi_{ij}^{\alpha\beta}(\tilde \yply)\big| \leq L_1(Y_0)  \big|\yply- \tilde \yply\big|,
\]
we find
\[
\max_{[0,T]}\|g\|_{L^2(\Omega)} \leq M L_1(Y_0) \|\yply- \tilde \yply \|_{W(Q_T)} \left\|\tilde u\right\|_{W(Q_T)}  \leq M L_1(Y_0) M_0 \|\yply- \tilde \yply \|_{W(Q_T)}.
\] 
Similarly, we can estimate the gradient as follows
\begin{multline*}
\left| \nabla g \right| \leq  M \left( L_1(Y_0)  \big|\yply- \tilde \yply\big|+ L_2(Y_0)  \big|\yply- \tilde \yply\big| \left| \nabla \yply\right|
+  L_1(Y_0)  \big|\nabla \yply- \nabla \tilde \yply\big|
\right)\left(  \left| \nabla \tilde u\right| + \left|\tilde u\right|\right) \\
 + M L_1(Y_0)  \big| \yply-  \tilde \yply\big| \left( \left| \nabla^2 \tilde u\right| 
+  \left| \nabla \tilde u\right| \right).
\end{multline*}
Therefore
\begin{multline*}
\|\nabla g\|_{L^2(\Omega)}
\leq M L_1(Y_0) 
\left[2  \|\yply- \tilde \yply \|_{L^\infty(Q_T)}  \| \tilde u  \|_{H^2(\Omega)} +  \|\yply- \tilde \yply \|_{L^4(\Omega)} 
\left( \| \nabla \tilde u \|_{L^4(\Omega)} + \| \tilde u \|_{L^4(\Omega)}\right)\right]
\\
+M L_2(Y_0)  \|\yply- \tilde \yply \|_{L^\infty(Q_T)} \left\|\nabla \yply \right \|_{L^4(\Omega)} 
 \left( \left\| \nabla \tilde u\right \|_{L^4(\Omega)} +\left\| \tilde u\right \|_{L^4(\Omega)}\right).
\end{multline*}
Thanks to the Ladyzhenskaya (or Gagliardo--Nirenberg) inequality, we obtain
\[
\max_{[0,T]}\|\nabla g\|_{L^2(\Omega)} \leq  C_S M \left(L_1(Y_0) Y_0 +L_2(Y_0) Y_0^2\right)  \|\yply- \tilde \yply \|_{W(Q_T)},
\] 
where $C_S^{1}$ is a product of Sobolev embedding constants, depending on $\Omega$ and $d$, namely 
\begin{equation}\label{eq:defCS1}
C_S^{1}=\max\left(1,C\left(H^2(\Omega)\hookrightarrow L^\infty(\Omega) \right)^3,C\left(H^2(\Omega)\hookrightarrow W^{1,4}(\Omega) \right)^3\right).  
\end{equation}
We now turn to the time derivative
\begin{multline*}
\left| \partial_t g \right| \leq  M \left( L_1(Y_0)  \big|\yply- \tilde \yply\big|+ L_2(Y_0)  \big|\yply- \tilde \yply\big| \left| \partial_t \yply\right|
+  L_1(Y_0)  \big|\partial_t \yply- \partial_t \tilde \yply\big|
\right)\left(  \left| \nabla \tilde u\right| + \left|\tilde u\right|\right) \\
 + M L_1(Y_0)  \big| \yply-  \tilde \yply\big| \left( \left| \nabla \partial_t \tilde u\right| 
+  \left| \partial_t \tilde u\right| \right).
\end{multline*}
Thus, using that $\partial_t \yply, \partial_t \tilde \yply \in L^4(Q_T)$ and $\partial_t \nabla \tilde u \in L^2(Q_T)$, we have
\[
\left\| \partial_t g \right\|_{L^2(Q_T)} \leq C^{2}_S M \left(L_1(Y_0) Y_0 +L_2(Y_0) Y_0^2\right)  \left\|\yply- \tilde \yply\right\|_{W(Q_T)},
\]
where $C_S^{2}$ is also a product of Sobolev embedding constants, depending on $\Omega$ and $d$, namely
\begin{equation}\label{eq:defCS2}
C_S^{2}=\max\left(C\left(H^1(\Omega)\hookrightarrow L^4(\Omega) \right)^2,1\right).  
\end{equation}

Finally, we apply Lemma \ref{lem:313} to obtain
\[
\| u-\tilde u \|_{W(Q_T)} \leq \epsilon Y_0 C_S M \left[L_1(Y_0) Y_0 +L_2(Y_0) Y_0^2\right]  \|\yply- \tilde \yply \|_{W(Q_T)},
\]
with 
\begin{equation}\label{eq:defCSK1}
C_S=C_S^{1}+C_S^{2}.  
\end{equation}
\end{proof}

We now turn to the proof of Proposition \ref{prop:315}. 
\begin{proof}[Proof of Proposition \ref{prop:315}]
Recall that 
\[
\epsilon_0 : R\to \min \left(\frac{1}{2+2K_0(R)},\frac{1}{ 1+ K_1 (R)}\right).
\]
where $K_0$ and $K_1$ are defined in \eqref{eq:defKo} and \eqref{eq:defK1}, respectively.

Given $u^{0} \in W(Q_T)$ we introduce the sequence $v_n$ given by $v_0=u^0$ and, for all $n\geq0$, 
\[
v_{n+1} = S(v_n,u^0,0),
\]
where $S$ is the solution map defined in \eqref{solution_map}. 
Note that the compatibility condition \eqref{eq:compatcondu_0} is satisfied at every step. Corollary~\ref{cor:boundedsequence} shows that $\| v_n\|_{W(Q_T)} \leq Y_0$ for each $n$.  Furthermore, thanks to Lemma \ref{lem:317},
\[
\|v_{n+2}-v_{n+1}\|_{W(Q_{T})}\leq \epsilon_0 K_1 (Y_0)  \|v_{n+1}-v_{n}\|_{W(Q_{T})} \leq \frac{K_1 (Y_0)}{1+K_1 (Y_0)}  \|v_{n+1}-v_{n}\|_{W(Q_{T})}.
\]
The sequence thus converges to a solution of \eqref{eq:1091}, thanks to the contraction mapping theorem.
\end{proof}

We now turn to the proof of the perturbation result in Theorem \ref{thm:297}. Consider the linearised system given by
\begin{align}\label{eq:parab-linear-1}
\begin{aligned}
	\partial_{t}\tilde u_{i}-\partial_{\alpha}\left[\widetilde {\mathfrak D}_{ij}^{\alpha\beta}(t,x,\tilde \yply)\partial_{\beta}\tilde u_{j} - \widetilde {\mathfrak F}_{ij}^{\alpha}(t,x,\tilde \yply)\tilde u_{j}\right] & =  \tilde{f}_i & &\text{in}\quad \mathcal{D}^{\prime}(\Omega),\\
\left[ \widetilde {\mathfrak D}_{ij}^{\alpha\beta}(t,x,\tilde \yply)\partial_{\beta}\tilde u_{j} - \widetilde {\mathfrak F}_{ij}^{\alpha}(t,x,\tilde \yply) \tilde u_{j}\right]\nu_{\alpha} & =  0 & &\text{on} \quad \partial\Omega,\\
\tilde u(0,\cdot) & = \tilde{u}^{0} & &\text{in} \quad \Omega,
\end{aligned}
\end{align}
Following the notation of Lemma \ref{lem:313} (see \eqref{solution_map}),
the solution operator associated to \eqref{eq:parab-linear-1} is denoted by $\tilde{S}(\tilde \yply,\tilde{u}^{0},\tilde{f})$.  

\begin{prpstn}
\label{prop:perturb-gap-linear}
Let $\yply,\tilde{\yply} \in W(Q_{T})$ be compactly supported in $\Omega$ for $t=0$ and write 
$$
Y_1 = C_{T/\infty} \max \left(\|\tilde{\yply}\|_{W(Q_{T})},\|\yply\|_{W(Q_{T})} \right).
$$
Assume $\epsilon < \epsilon_0 (Y_1)$, 
so that the solution operators $S$ and $\tilde{S}$ corresponding to \eqref{eq:parab-linear} and \eqref{eq:parab-linear-1}
respectively are well defined.  
For any $u^{0},{\tilde u}^{0}\in H^{2}(\Omega)$ with compact 
support in $\Omega$ there holds
\begin{align*}
	\Big\Vert \tilde{S}(\tilde{\yply},{\tilde u}^{0},0)&-S(\yply,u^{0},0 )\Big \Vert_{W(Q_{T})} \\ 
	&\leq C_{T / \infty} \| {\tilde u}^{0}-u^{0}\|_{H^{2}(\Omega)} +\epsilon K_1(Y_1) \|\tilde{\yply}-\yply\|_{W(Q_{T})} \\
& \quad +\epsilon K_2(Y_1) \left(\Vert (\tilde{a},\tilde{b})-(a,b)\Vert _{C^{1}([0, \infty)\times\mathbb{R}^{d})}
+\Vert (\tilde{\phi},\tilde{\psi})-(\phi,\psi)\Vert_{C^{1}\left(\overline{B_{Y_{1}}(0)}\right)}\right).
\end{align*} 
where $K_2$ depends on $L_0$, $L_1$, $\Omega$, $M$ and $C_{T / \infty}$ and is given by \eqref{eq:DefK2}. 
\end{prpstn}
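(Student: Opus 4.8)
The plan is to write the difference $w = \tilde S(\tilde\yply,\tilde u^0,0) - S(\yply,u^0,0)$ as the solution of a single linearised system to which Lemma \ref{lem:313} can be applied, with a carefully assembled right-hand side that collects all the mismatches between the two problems. Set $u = S(\yply,u^0,0)$ and $\tilde u = \tilde S(\tilde\yply,\tilde u^0,0)$. Subtracting the two equations \eqref{eq:parab-linear} and \eqref{eq:parab-linear-1}, the principal part acting on $w$ is $\partial_t w_i - \partial_\alpha[\mathfrak D_{ij}^{\alpha\beta}(t,x,\yply)\partial_\beta w_j - \mathfrak F_{ij}^\alpha(t,x,\yply)w_j]$, and everything else is moved to the right-hand side. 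Concretely, $w = \tilde S(\yply, \tilde u^0 - u^0, g)$ where $g = g^{(1)} + g^{(2)}$ splits into a term $g^{(1)}$ coming from replacing $\tilde\yply$ by $\yply$ in the coefficients $\widetilde{\mathfrak D},\widetilde{\mathfrak F}$ acting on $\tilde u$ — this is exactly the quantity estimated in the proof of Lemma \ref{lem:317}, contributing $\epsilon K_1(Y_1)\|\tilde\yply - \yply\|_{W(Q_T)}$ after applying Lemma \ref{lem:313} — and a term $g^{(2)}$ coming from replacing $(\tilde a,\tilde b,\tilde\phi,\tilde\psi)$ by $(a,b,\phi,\psi)$ while keeping $\yply$ fixed, of the schematic form
\begin{equation*}
g^{(2),\alpha}_i = \epsilon\big(\tilde a_{ij}^{\alpha\beta}\tilde\phi_{ij}^{\alpha\beta}(\yply) - a_{ij}^{\alpha\beta}\phi_{ij}^{\alpha\beta}(\yply)\big)\partial_\beta \tilde u_j - \epsilon\big(\tilde b_{ij}^{\alpha}\tilde\psi_{ij}^{\alpha}(\yply) - b_{ij}^{\alpha}\psi_{ij}^{\alpha}(\yply)\big)\tilde u_j .
\end{equation*}

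The bulk of the work is estimating $\|g^{(2)}\|_{C([0,T];H^1(Q_T))\cap H^1(0,T;L^2(\Omega))}$ in terms of $\|(\tilde a,\tilde b)-(a,b)\|_{C^1}$ and $\|(\tilde\phi,\tilde\psi)-(\phi,\psi)\|_{C^1(\overline{B_{Y_1}(0)})}$. I would write each coefficient difference as $\tilde a\tilde\phi - a\phi = (\tilde a - a)\tilde\phi + a(\tilde\phi - \phi)$ and similarly for the drift term, then differentiate in space and time. Each resulting term is a product of: a bounded coefficient (controlled by $M$), a perturbation difference or its derivative (controlled by $\|(\tilde a,\tilde b)-(a,b)\|_{C^1}$ or $\|(\tilde\phi,\tilde\psi)-(\phi,\psi)\|_{C^1}$ evaluated on $\overline{B_{Y_1}(0)}$ since $\|\yply\|_{L^\infty(Q_T)} \le C_S^\infty\|\yply\|_{W(Q_T)} \le Y_1$), a factor of $\yply$ or its first derivatives (controlled by $L_0(Y_1)$, $L_1(Y_1)$ and $\|\yply\|_{W(Q_T)}$ via the chain rule), and a factor of $\tilde u$ or its derivatives up to second order in space and first order in space-time. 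The Ladyzhenskaya / Gagliardo–Nirenberg inequality and the embeddings $H^2(\Omega)\hookrightarrow L^\infty$, $H^2(\Omega)\hookrightarrow W^{1,4}$, $H^1(\Omega)\hookrightarrow L^4$ handle the products exactly as in Lemma \ref{lem:317}, and $\|\tilde u\|_{W(Q_T)} \le Y_1$ by Proposition \ref{prop:315}. Collecting constants gives a bound of the form $\|g^{(2)}\|_{\cdots} \le \epsilon\, C\, \big(\max(L_0(Y_1)+L_1(Y_1)Y_1,\ M(1+Y_1))\big)\, Y_1 C_S\, \big(\|(\tilde a,\tilde b)-(a,b)\|_{C^1} + \|(\tilde\phi,\tilde\psi)-(\phi,\psi)\|_{C^1(\overline{B_{Y_1}(0)})}\big)$, which is $\epsilon\, K_2(Y_1)/C_{T/\infty}$ times that sum of norms, matching \eqref{eq:DefK2}.

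Finally I would apply Lemma \ref{lem:313} to $w = \tilde S(\yply, \tilde u^0 - u^0, g^{(1)}+g^{(2)})$: the hypothesis \eqref{eq:epsilonbound} holds because $\epsilon < \epsilon_0(Y_1) \le 1/(2+2K_0(Y_1))$ and $\|\yply\|_{W(Q_T)} \le Y_1$, so $1 - \epsilon K_0(\|\yply\|_{W(Q_T)}) \ge 1/2$; the compatibility condition \eqref{eq:compatcond1} for the data $\tilde u^0 - u^0$ and source $g^{(1)}+g^{(2)}$ is checked using the compact support of $u^0$ and $\tilde u^0$ (so all boundary terms vanish near $\partial\Omega$). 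This yields
\begin{equation*}
\|w\|_{W(Q_T)} \le C_{T/\infty}\big(\|\tilde u^0 - u^0\|_{H^2(\Omega)} + \|g^{(1)}\|_{\cdots} + \|g^{(2)}\|_{\cdots}\big),
\end{equation*}
and substituting the two source estimates gives precisely the claimed inequality. The main obstacle is purely bookkeeping: organising the many product terms in $\nabla g^{(2)}$ and $\partial_t g^{(2)}$ so that every factor is assigned to one of the controlled quantities, and verifying that the worst-case combination of Sobolev constants and of $L_0,L_1,M,Y_1$ is indeed absorbed by the definition \eqref{eq:DefK2} of $K_2$; no new analytic idea beyond what already appears in Lemmas \ref{lem:313} and \ref{lem:317} is needed.
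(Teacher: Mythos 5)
Your proposal is correct, and the substance of the estimates coincides with the paper's: the same algebraic splitting $\tilde a\tilde\phi-a\phi=(\tilde a-a)\tilde\phi+a(\tilde\phi-\phi)$, the same Gagliardo--Nirenberg/Sobolev product bounds from Lemma \ref{lem:317}, and the same final appeal to Lemma \ref{lem:313}. The organisation differs slightly: the paper telescopes the solution operators, writing $\tilde S(\tilde\yply,\tilde u^0,0)-S(\yply,u^0,0)$ as $[\tilde S(\tilde\yply,\tilde u^0,0)-S(\tilde\yply,\tilde u^0,0)]+[S(\tilde\yply,\tilde u^0,0)-S(\yply,\tilde u^0,0)]+[S(\yply,\tilde u^0,0)-S(\yply,u^0,0)]$, so that the middle term is handled by Lemma \ref{lem:317} as a black box, the last by linearity of $S$ in the initial data, and only the first requires a new source estimate ($\epsilon\tilde S(\tilde\yply,0,\tilde g)$); you instead subtract the two PDEs directly and apply Lemma \ref{lem:313} once to $w=\tilde u-u$ with initial data $\tilde u^0-u^0$ and the combined source $g^{(1)}+g^{(2)}$. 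Your route buys a single application of the linear estimate and makes the equation satisfied by the difference explicit, at the price of having to verify the combined compatibility condition (which you do, correctly, via the compact supports) and of re-deriving the $K_1$-type bound for $g^{(1)}$ rather than quoting Lemma \ref{lem:317} as stated; the paper's telescoping keeps each error source (initial data, linearisation point, coefficient perturbation) isolated and reuses its lemmas verbatim. Two small slips, neither fatal: the difference $w$ solves the system with coefficients $\mathfrak D(t,x,\yply),\mathfrak F(t,x,\yply)$, so it is $S(\yply,\tilde u^0-u^0,g)$, not $\tilde S(\yply,\cdot,\cdot)$ as you wrote; and the bound $\Vert\tilde u\Vert_{W(Q_T)}\leq Y_1$ follows from Lemma \ref{lem:313} (or Corollary \ref{cor:boundedsequence}) applied to the linearised tilded problem, not from Proposition \ref{prop:315}, since $\tilde u=\tilde S(\tilde\yply,\tilde u^0,0)$ solves the linear, not the nonlinear, system.
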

\begin{proof}
We write
\begin{equation*}
\tilde{S}(\tilde{\yply},{\tilde u}^{0},0)-S(\yply,u^{0},0) = \tilde{S}(\tilde{\yply},{\tilde u}^{0},0)-S(\tilde{\yply},{\tilde u}^{0},0)+S(\tilde{\yply},{\tilde u}^{0},0) -S(\yply,{\tilde u}^{0},0) +S(\yply,{\tilde u}^{0},0)-S(\yply,u^{0},0).
\end{equation*}
Thanks to Lemma \ref{lem:313} and to the linearity of $S$ with respect to the initial data, we have
\[
\left \Vert S\left(\yply,{\tilde u}^{0},0\right)-S\left(\yply,u^{0},0\right)\right \Vert_{W(Q_{T})}\leq C_{T / \infty} \| u^{0}-{\tilde u}^{0}\|_{H^{2}(\Omega)}.
\]
Note that the compatibility condition is satisfied due to the compact support of $u^0$, ${\tilde u}^0$ and $\yply(t=0)$ in $\Omega$.  On the other hand, Lemma \ref{lem:317} shows that 
\[
\left \Vert S(\tilde{\yply},{\tilde u}^{0},0)-S(\yply,{\tilde u}^{0},0)\right \Vert_{W(Q_{T})}
\leq
\epsilon K_{1}(Y_{1}) \Vert \tilde{\yply}- {\yply} \Vert _{W (Q_{T})}.
\]
We write 
\[
\tilde{S}(\tilde{\yply},{\tilde u}^{0},0)-S(\tilde{\yply},{\tilde u}^{0},0)=\epsilon\tilde{S}(\tilde{\yply},0,\tilde{g}),
\]
where $\tilde{g}$ is given by 
\begin{align*}
\tilde{g}_i^\alpha & =\epsilon^{-1}\left[\left(\tilde{\mathfrak D}_{ij}^{\alpha\beta}-\mathfrak D_{ij}^{\alpha\beta}\right)(t,x,\tilde{\yply})\partial_{\beta}\tilde  u_{j} - \left(\tilde{\mathfrak F}_{ij}^{\alpha}- \mathfrak F_{ij}^{\alpha}\right)(t,x,\tilde{\yply})\tilde{u}^{j}\right]\\
 & =\left(\tilde{a}_{ij}^{\alpha\beta}\tilde{\phi}_{ij}^{\alpha\beta}-a_{ij}^{\alpha\beta}\phi_{ij}^{\alpha\beta}\right)(t,x,\tilde{\yply})\partial_{\beta}\tilde u_{j}+\left(\tilde{b}_{i}^{\alpha\beta}\tilde \psi_{ij}^{\alpha}-b_{ij}^{\alpha}\psi_{ij}^{\alpha}\right)(t,x,\tilde{\yply})\tilde{u}^{j},
\end{align*}
and $\tilde{u}=S(\tilde{\yply},{\tilde u}^{0},0)$. In other words, $\tilde g$ is of the form
\[
\tilde g = \left[ (\tilde{a} - a) \tilde{\phi} + a (\tilde\phi -\phi)\right]\nabla \tilde u + \left[ (\tilde{b} - b) \tilde \psi + b (\tilde\psi -\psi)\right] \tilde u,
\]
and thus we are in a setting similar to that of the proof of Lemma \ref{lem:317}. In particular we have
\begin{multline*}
 (|\nabla g|+|g|) \leq 
\left( \big\Vert (\tilde{a},\tilde{b})-(a,b)\big\Vert _{C^{1}([0,\infty)\times\mathbb{R}^{d})} L_0(Y_1)  +  M \max_{B_{Y_{1}}(0)} \big| (\tilde{\phi},\tilde{\psi})-(\phi,\psi)\big|   
\right) (|\nabla^2 \tilde u| +2|\nabla \tilde u| + |\tilde u|) \\
 + \left(
                 \big|(\tilde{a},\tilde{b})-(a,b)\big| L_1(Y_1)  
  + M \max_{B_{Y_{1}}(0)} \big| (\tilde{D\phi},\tilde{D\psi})-(D\phi,D\psi)\big| 
  \right)
  | \nabla \tilde \yply |  (|\nabla \tilde u| + |\tilde u|)
\end{multline*}
As in the proof of Lemma~\ref{lem:317}, using Gagliardo--Nirenberg's inequality to bound the last term, we find 
\begin{multline*}
\frac{1}{C_S} \max_{[0,T]} \| \tilde g \|_{H^1(\Omega)} \\
\leq 
\big\Vert (\tilde{a},\tilde{b})-(a,b)\big\Vert _{C^{1}([0,\infty)\times\mathbb{R}^{d})} \left[2 L_0(Y_1)Y_1 +  L_1(Y_1)Y_1^2 \right] 
+  M  \big\Vert (\tilde{\phi},\tilde{\psi})-(\phi,\psi)\big\Vert _{C^{1}(\overline{B_{Y_{1}}(0)})} (2Y_1+Y_1^2),  
\end{multline*}
where $C_S$ is given by \eqref{eq:defCSK1}.
Finally, we bound $\partial_t g $ to show that $\tilde g\in C([0,T];H^1(\Omega))\cap H^1(0,T;L^2(\Omega))$ in the same way, namely 
\begin{multline*}
\frac{1}{C_S}   \| \partial_t \tilde g \|_{L^2(Q_T)} \\
\leq 
\big\Vert (\tilde{a},\tilde{b})-(a,b)\big\Vert _{C^{1}([0,\infty)\times\mathbb{R}^{d})} \left[ L_0(Y_1)Y_1 +  L_1(Y_1)Y_1^2 \right] 
+ M  \big\Vert (\tilde{\phi},\tilde{\psi})-(\phi,\psi)\big\Vert _{C^{1}(\overline{B_{Y_{1}}(0)})} (  Y_1+Y_1^2)  .
\end{multline*}
Because of the compact support of $u^0$, ${\tilde u}^0$, $\tilde{\yply}(t=0)$ and $\yply(t=0)$ in $\Omega$, we  can conclude thanks to Lemma~\ref{lem:313} that 
\[
  \big \| \tilde{S}(\tilde{\yply},{\tilde u}^{0},0 )-S (\tilde{\yply},{\tilde u}^{0},0 ) \big \|_{W(Q_T)} 
  \leq \epsilon  K_2 (Y_1) \left(\big\Vert (\tilde{a},\tilde{b})-(a,b)\big\Vert _{C^{1}([0,\infty)\times\mathbb{R}^{d})} +  
  \big\Vert (\tilde{\phi},\tilde{\psi})-(\phi,\psi)\big\Vert _{C^{1}(\overline{B_{Y_{1}}(0)})}   \right).
\]
\end{proof}

\begin{proof}[Proof of Theorem~\ref{thm:297}]
As in the proof of Proposition \ref{prop:315}, the 
sequences $v_{n+1}=S\left(v_{n},u^{0},0\right)$ and $\tilde{v}_{n+1}=\tilde{S}\left(\tilde{v}_{n},{\tilde u}^{0},0\right)$ for
all $n\geq 1$, with $v_{0}=u^0$ and $\tilde{v}_{0}= \tilde u^0$,  converge to $u$ and $\tilde{u}$, respectively as $n\to\infty$. Thanks to Proposition \ref{prop:perturb-gap-linear}
we have 
\begin{multline*}
	\Vert \tilde{v}_{n+1}-v_{n+1}  \Vert_{W(Q_{T})} \leq C_{T / \infty} \| {\tilde u}^{0}-u^{0}\|_{H^{2}(\Omega)} 
	+\epsilon K_1(Y_1) \|\tilde{v}_{n}- v_n\|_{W(Q_{T})} \\
 +\epsilon K_2(Y_1) \left(\Vert (\tilde{a},\tilde{b})-(a,b)\Vert _{C^{1}([0, \infty)\times\mathbb{R}^{d})}+\Vert (\tilde{\phi},\tilde{\psi})-(\phi,\psi)\Vert_{C^{1}(\overline{B_{Y_{1}}(0)})}\right).
\end{multline*}
Passing to the limit as $n\to\infty$,  we obtain
\begin{multline*}
	\Vert \tilde{u}-u  \Vert_{W(Q_{T})}  \leq [1+ K_1(Y_1)]C_{T / \infty} \| {\tilde u}^{0}-u^{0}\|_{H^{2}(\Omega)} \\
 +\epsilon [1+K_1(Y_1)]K_2(Y_1) \left(\Vert (\tilde{a},\tilde{b})-(a,b)\Vert _{C^{1}([0, \infty)\times\mathbb{R}^{d})}+\Vert (\tilde{\phi},\tilde{\psi})-(\phi,\psi)\Vert_{C^{1}(\overline{B_{Y_{1}}(0)})}\right),
 \end{multline*}
 as required. 
\end{proof}

\section{Numerical simulations} \label{sec:numerics}

In this section, we present numerical simulations for the cross-diffusion systems described as Examples 1,2 and 3 in the introduction.  We consider these examples when the physical dimension is $d=2$, but with initial data and potentials $V_i$ varying in one direction such that the solutions of \eqref{eq:cross-diff_general} can be represented as one-dimensional. We solve \eqref{eq:cross-diff_general} in the domain $\Omega = (-1/2, 1/2)$ 
using a second-order accurate finite-difference scheme in space and the method of lines with the inbuilt Matlab ode solver \texttt{ode15s} in time. We use an equidistant mesh of size $|\Omega|/J$, with nodes $x_n = -1 + n \Delta x$, $0 \le n \le J$. 
The fluxes are evaluated at the nodes $x_n$ to ensure the no-flux conditions are imposed accurately, while the solutions $u_i$ are computed at the midpoints $x_{n+1/2}$. The unknowns are $u_{i,n}(t) \approx u_i(x_n,t)$, $i = 1,2$. 
The discretisation of the spatial derivatives is done in the spirit of the positivity-preserving scheme proposed in \cite{Zhornitskaya:2000hr}. For example, the terms of the form $u_i \nabla u_j$ are discretised as 
\[
\left (u_i \frac{\partial u_j}{\partial x} \right)(x_{n+ 1/2}) \approx \left ( \frac{ 2 u_{i,n+1} u_{i,n}}{u_{i,n+1} + u_{i,n}} \right) \left( \frac{ u_{j,n+1} - u_{j,n}}{\Delta x} \right).
\]

We begin with a simulation of the model in Example \ref{exa:model1} for fixed $\epsilon$ to demonstrate a typical evolution of a cross-diffusion system. 
The value of the parameters used in the numerical implementation are given below. 
Recall that the model describes two species of hard sphere particles in $\mathbb R^d$, $d=2, 3$, possibly with different numbers $N_i$, diffusions $D_i$, and diameters $\varepsilon_i$. 
The coefficients in \eqref{model1} are given by
\begin{equation} \label{coef_model1}
	a_i = \frac{2\pi}{d}(d-1) \bar N_i \bar \varepsilon_i^d, \quad b_i = \frac{2\pi}{d}\frac{[(d-1)D_i + d D_j]}{D_i+D_j} \bar N_i, \quad c_i = \frac{2\pi}{d}\frac{D_i}{D_i+D_j} \bar N_j,
\end{equation}
for $i,j = 1, 2$ ($j \ne i$), where $\bar N_i = N_i/(N_1 + N_2)$, $\varepsilon = (\varepsilon_1 + \varepsilon_2)/2$, $\bar \varepsilon_i = \varepsilon_i/\varepsilon$. In particular $\bar N_1 + \bar N_2 = 1$ and $\bar \varepsilon_1 + \bar \varepsilon_2 = 2$. The small parameter $\epsilon$  is then defined as
\begin{equation}
	\epsilon = (N_1 + N_2) \varepsilon^d/|\Omega|. 
\end{equation} 

For our first example, we choose $N_1 = N_2 = 100$, $D_1 = D_2 = 1$, $\varepsilon_1 = \varepsilon_2 = 0.0354$, $d=2$. This gives the value $\epsilon = 0.25$. We set initial data $u_{1,0} = C\exp(-80(x+0.2)^2)$, where $C$ is the normalisation constant, and $u_{2,0}= 1$, and external potentials $V_1(x) = 1-\exp(-120 x^2)$ and $V_2(x) = 0$. We run the time-dependent simulation until $T = 1$ and plot the results in Figure \ref{fig:fig1}. We observe the evolution of $u_1$ towards a non-trivial steady state, governed by $V_1$, while $u_2$ diffuses away from the centre (despite having no external potential) due to the cross-species interaction. 
\def \scc {0.8}
\def \scl {1.0}
\begin{figure}
\unitlength=1cm
\begin{center}
\vspace{3mm}
\psfrag{x}[][][\scl]{$x$} \psfrag{t}[][][\scl]{$t$} \psfrag{u1}[][][\scl]{$ u_1$} \psfrag{u2}[][][\scl]{$u_2$} \psfrag{data1}[][][\scc]{$t_0$} \psfrag{t}[b][][\scl]{$t$}
\psfrag{data2}[][][\scc]{$t_1$}  \psfrag{data3}[][][\scc]{$t_2$}  \psfrag{data4}[][][\scc]{$t_3$}  \psfrag{data5}[][][\scc]{$t_4$} 
	\includegraphics[height = .38\textwidth]{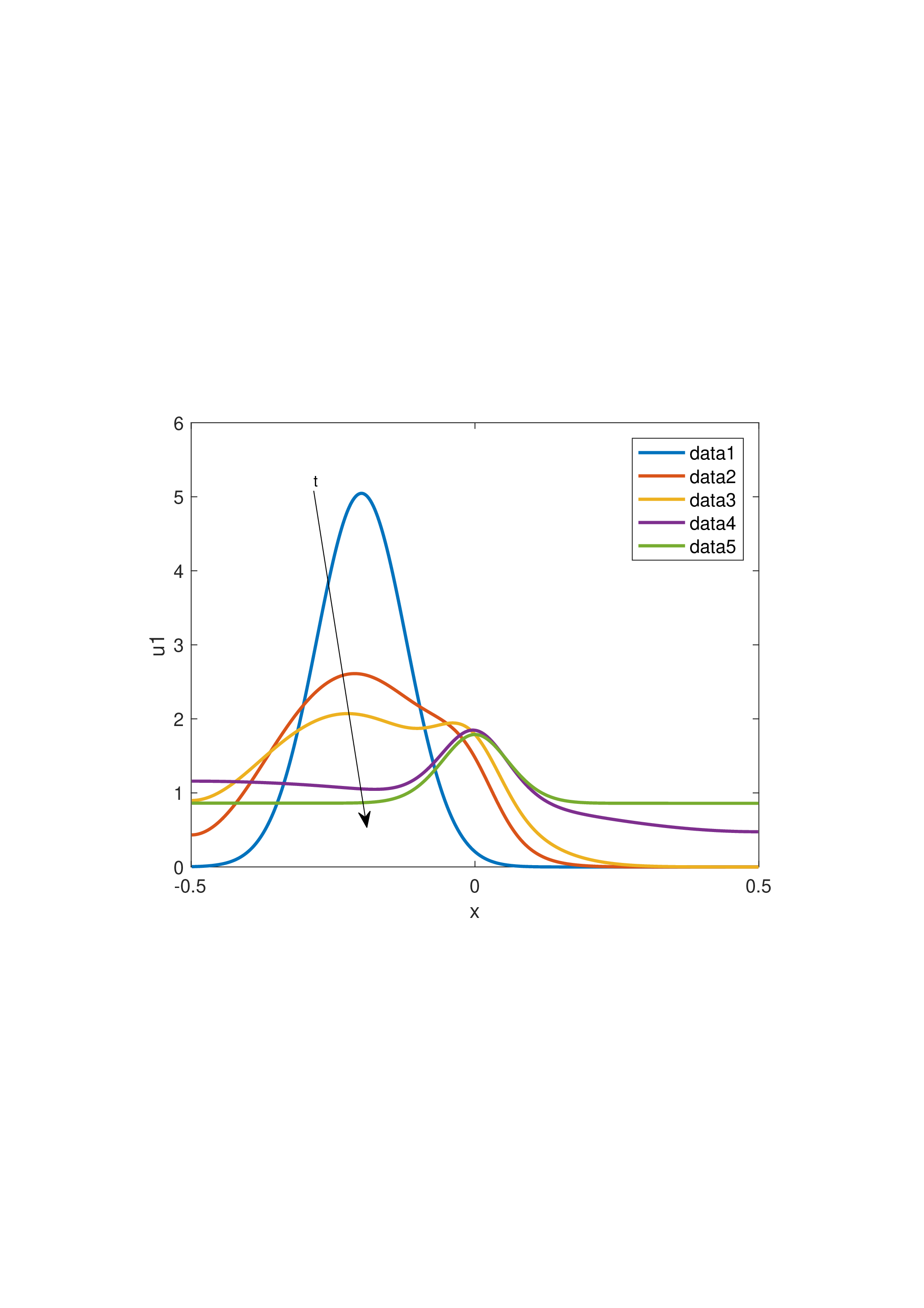}\qquad
	\includegraphics[height = .38\textwidth]{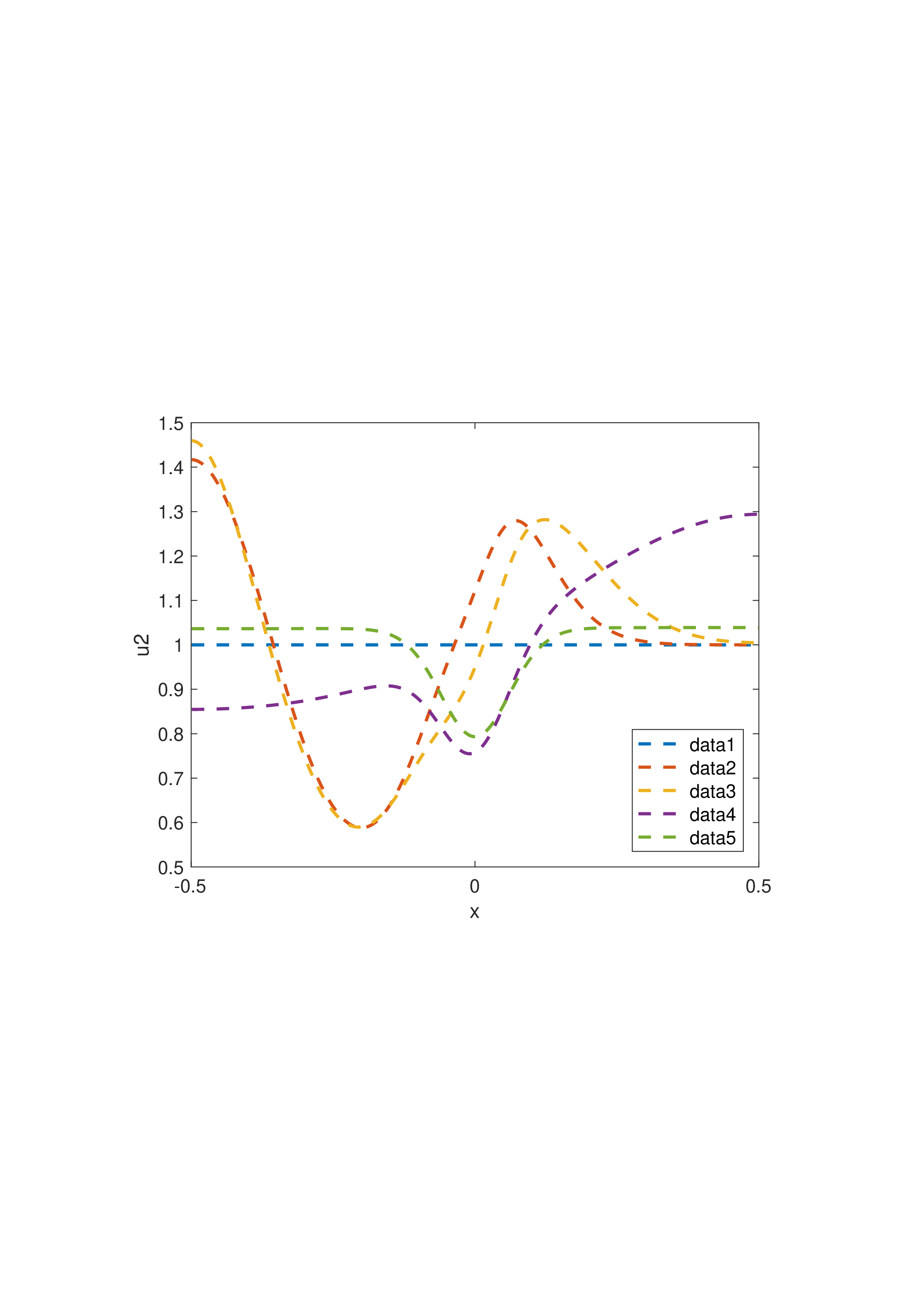}
\caption{Time-dependent simulation of the model \eqref{model1} in Example \ref{exa:model1}. Time-evolution of the population densities $u_1$ (left) and $u_2$ (right) with  initial data $u_1^0 = C\exp(-80(x+0.2)^2)$, where $C$ is the normalisation constant, and $u_2^0= 1$, and final time $T = 1$ (times shown $t_0=0, t_1 =0.005, t_2 = 0.01, t_3 = 0.1, t_4 = T$). The external potentials are $V_1(x) = 1-\exp(-120x^2)$ and $V_2(x) = 0$ and volume fraction parameter $\epsilon= 0.25$. The other parameters are: $\bar N_i = 1/2$, $\bar \varepsilon_i = 1$, $d=2$, $D_i = 1$, $J = 500$.}
\label{fig:fig1}
  \end{center}
\end{figure}

To show the dependence of the solutions of \eqref{model1} with the small parameter $\epsilon$, in Figure \ref{fig:fig2} we plot the steady state solution $u^\infty$ for three values of the occupied volume $\epsilon$, namely $\epsilon = 0, 0.125, 0.25$. This is obtained by running the time-dependent solver for long times; we found $T = 20$ to be sufficient. Convergence to a unique steady state is guaranteed by the results in \cite{Bruna:2016cm} and our time-independent estimates. We observe the effects of $\epsilon$: for $\epsilon =0$ (no interactions), $u_2 = 1$ is already the steady state solution. As we increase $\epsilon$, the maximum of $u_1^\infty$ goes down, as not so many particles can fit where the potential is minimised, and a minimum in $u_2^\infty$ appears where $u_1^\infty$ has its maximum, showing that particles from species 2 are pushed out driven by gradients in $u_1$. 
\def \scc {0.8}
\def \scl {1.0}
\begin{figure}
\unitlength=1cm
\begin{center}
\vspace{3mm}
\psfrag{x}[][][\scl]{$x$} \psfrag{t}[][][\scl]{$t$} \psfrag{u}[b][][\scl]{$u^\infty$} \psfrag{de}[][][\scl]{$\epsilon$} \psfrag{d}[][][\scl]{$\epsilon$} 
	\includegraphics[width = .6\textwidth]{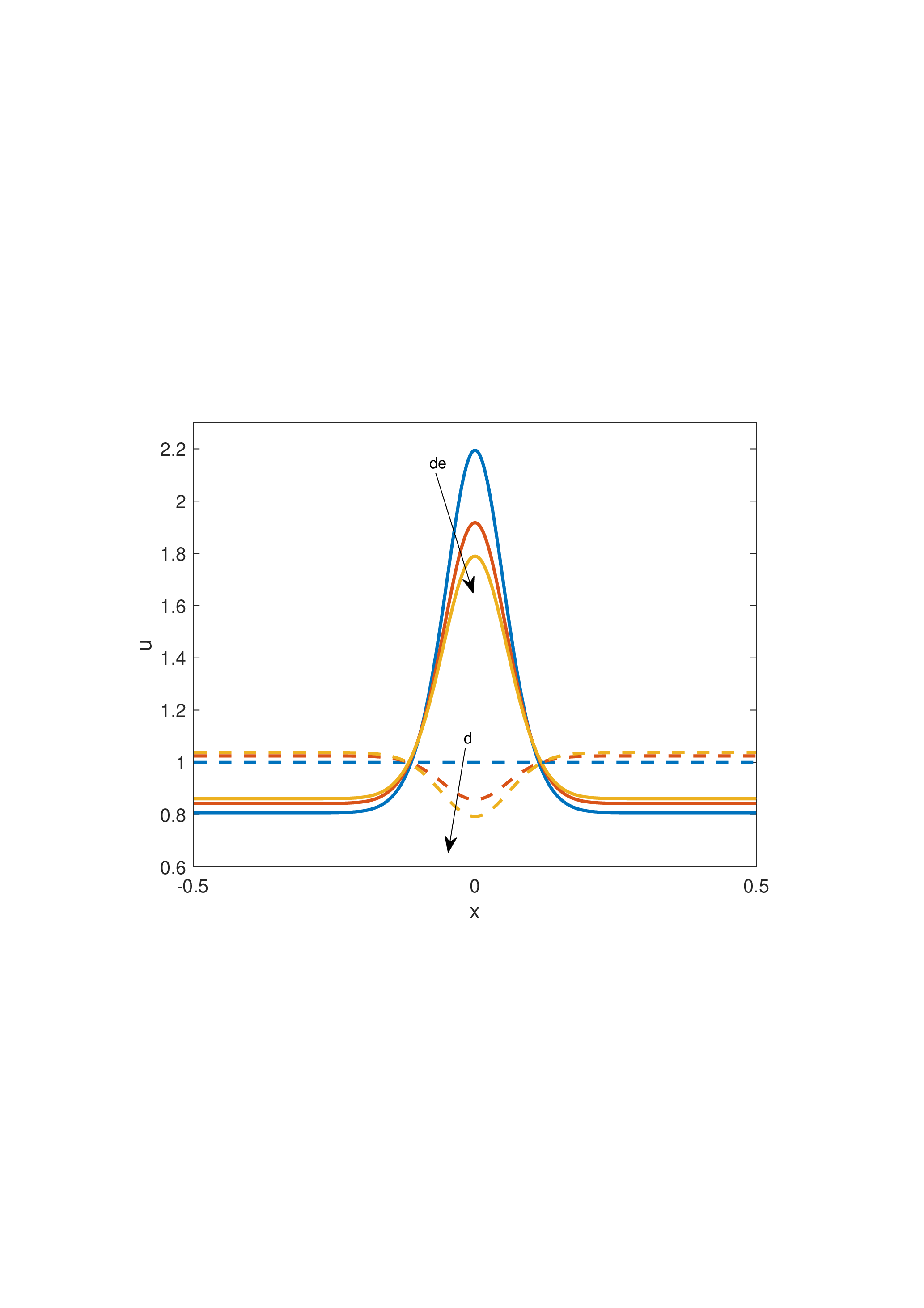}  
\caption{Steady state solutions $u_1^\infty$ (solid lines) and $u_2^\infty$ (dashed lines) of the model \eqref{model1} in Example \ref{exa:model1} for different values of $\epsilon$, $\epsilon = 0, 0.125, 0.25$ (arrows show the direction of increasing $\epsilon$). The other parameters are given in Figure \ref{fig:fig1}.}
\label{fig:fig2}
  \end{center}
\end{figure}

In the next simulation, we want to test the behaviour of the system in Example \ref{exa:model1} as the perturbation in $\epsilon$ increases. To make the calculation of the bounds simpler, we assume that $\bar \varepsilon_i = 1$, $\bar N_i = 1/2$, $d=2$, and that the two components of the solution coincide at at least one point, that is, $u_1=u_2=u^*$ for some $u^*>0$. We choose the initial data shown in Figure \ref{fig:fig3}(a) and $V_i = 0$, so that $u^* = \max_x u^{0} \approx 1.333$. We have already introduced a bound $\epsilon_0$ in \eqref{eq:defepsilonzero}, ensuring that the existence result in Theorem \ref{thm:297} holds. The expression of $\epsilon_0$ is found in the proof of Lemma \ref{lem:313} for a general system, but it can be improved for the specific system at hand. However, in this section we will use another bound, which we denote by $\epsilon^*$, that ensures ellipticity of the diffusion matrix \eqref{eq:diff_matrix}. This is in fact the practical bound required to obtain meaningful numerical results, and it is in general less restrictive than $\epsilon_0$.

\begin{lmm}[Ellipticity bound]\label{lem:second_bound}
The following condition is necessary to ensure coercivity of the diffusive term. Suppose that the solution of \eqref{eq:cross-diff_general} with matrices \eqref{model1} satisfies $\max_{Q_T} |u| = u^* >0$, $d=2$, and one of the following cases apply:
\begin{compactenum}[(i)]
\item Different diffusivities: $\bar \varepsilon_i = 1$, $\bar N_i = 1/2$,  and $\theta = (D_1-D_2)^2/4D_1 D_2 \ge 0$.
\item Different particle sizes: $D_i = 1$, $\bar N_i = 1/2$, $\bar \varepsilon_2 = 2- \bar \varepsilon_1$, and $\theta = 1 - 2 \bar \varepsilon_1  + \bar \varepsilon_1^2 \ge 0$.
\item Different particle numbers: $D_i = 1$, $\bar \varepsilon_i = 1$, $\bar N_2 = 1- \bar N_1$, and $\theta = 9 (1/4 - \bar N_1 + \bar N_1^2) \ge 0$.
\end{compactenum}
Then the symmetrised version of the diffusion matrix \eqref{eq:diff_matrix} is non-degenerate provided that
\[
\epsilon \le \epsilon^* = \frac{1+\sqrt{9+4 \theta}}{2+\theta} (\pi u^*)^{-1},
\]
where $\theta$ takes the values specified above. The bound is sharp in the case that both components $u_1$ and $u_2$ attain $u^*$ at the same point.
\end{lmm}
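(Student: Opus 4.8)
The strategy is to reduce the coercivity question to the positive-definiteness of a single $2\times2$ symmetric matrix depending affinely on $(u_1,u_2)$, to locate the worst value of $(u_1,u_2)$, and then to solve a scalar quadratic inequality in $\epsilon$, carrying out the final computation separately for the three parameter regimes. Coercivity of the diffusive bilinear form $\int_\Omega \mathfrak D(u)\nabla u\cdot\nabla u\,\mathrm dx$ is governed by the symmetric part $A(u):=\tfrac12(\mathfrak D(u)+\mathfrak D(u)^{T})$ of the matrix in \eqref{eq:diff_matrix}, which has entries
\[
A_{11}=D_1(1+\epsilon a_1u_1-\epsilon c_1u_2),\qquad A_{22}=D_2(1+\epsilon a_2u_2-\epsilon c_2u_1),\qquad A_{12}=A_{21}=\tfrac{\epsilon}{2}(D_1b_1u_1+D_2b_2u_2).
\]
I would invoke the elementary criterion that a symmetric $2\times2$ matrix is positive definite exactly when $A_{11}>0$ and $\det A=A_{11}A_{22}-A_{12}^{2}>0$, so that non-degeneracy of $A$ throughout $Q_T$ amounts to these two inequalities holding for every attained value $(u_1(t,x),u_2(t,x))$, which by hypothesis lies in the square $[0,u^*]^{2}$.

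Next I would localise the worst configuration. Each entry of $A$ is affine in $u$, and the set of positive-definite symmetric $2\times2$ matrices is convex; since $u\mapsto A(u)$ is affine it is therefore enough to impose positive-definiteness at the four vertices of $[0,u^*]^{2}$. At $(0,0)$ one has $A=\operatorname{diag}(D_1,D_2)$, which is positive definite, and because all of $a_i,b_i,c_i$ are positive---so that the off-diagonal entry is largest and $\det A$ smallest when both densities saturate---the binding vertex is $u_1=u_2=u^*$. (Equivalently, the $\epsilon^{2}$-homogeneous part of $\det A$ regarded as a quadratic form in $u$ is negative semidefinite, so $\det A$ is concave along every segment and its minimum over the box is attained at a vertex, after which one compares the three nontrivial vertices.)

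Then I would carry out the scalar computation at $u_1=u_2=u^*$. Inserting the explicit coefficients \eqref{coef_model1} with $d=2$ together with the parameter choices of case (i), (ii) or (iii), using $\bar N_1+\bar N_2=1$ and $\bar\varepsilon_1+\bar\varepsilon_2=2$, and writing $\mu=\tfrac{\pi u^*\epsilon}{2}$, the diagonal entries take the form $A_{11}=D_1(1+\mu p_1)$, $A_{22}=D_2(1+\mu p_2)$ with $p_1,p_2$ whose sum and product can be expressed through $\theta$, while $A_{12}=\mu\,q$ for a suitable $q$; the determinant condition $\det A>0$ then collapses to the single inequality $1+\mu-(2+\theta)\mu^{2}>0$, and one checks that $A_{11},A_{22}>0$ is implied by it in that range. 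Solving $(2+\theta)\mu^{2}-\mu-1=0$ gives $\mu<\mu_+:=\frac{1+\sqrt{9+4\theta}}{2(2+\theta)}$, i.e.\ $\epsilon<\epsilon^{*}=\frac{2\mu_+}{\pi u^{*}}=\frac{1+\sqrt{9+4\theta}}{2+\theta}(\pi u^{*})^{-1}$, which is the assertion. For the sharpness claim, if $u_1$ and $u_2$ both equal $u^{*}$ at a common point $(t_0,x_0)\in Q_T$, then at that point $\det A=D_1D_2(1+\mu-(2+\theta)\mu^{2})$, which vanishes for $\epsilon=\epsilon^{*}$ and is strictly negative for $\epsilon>\epsilon^{*}$; hence $A(u(t_0,x_0))$ then has a negative eigenvalue and the diffusive form fails to be coercive, so the bound cannot be improved.

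\textbf{Main obstacle.} The delicate step is the third one: one must verify, for each of the three physically distinct regimes (unequal diffusivities, unequal sizes, unequal numbers), that after substituting \eqref{coef_model1} the determinant condition at $(u^*,u^*)$ reduces to the \emph{same} $\theta$-parametrised quadratic $1+\mu>(2+\theta)\mu^{2}$ with the $\theta$ prescribed in each item---this requires careful bookkeeping of the combinations $a_i-c_i$, $a_ic_i$ and $D_ib_i$ together with the constraints on $\bar N_i$, $\bar\varepsilon_i$ and $D_i$---and confirming that this vertex is indeed the binding one among the vertices of $[0,u^*]^{2}$.
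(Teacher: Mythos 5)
Your proposal is correct and follows essentially the same route as the paper: symmetrise the diffusion matrix, evaluate $\det(\mathrm{Sym}(\mathfrak D))$ at the worst configuration $u_1=u_2=u^*$, and solve the resulting quadratic in $\epsilon\pi u^*$ to obtain $\epsilon^*=\frac{1+\sqrt{9+4\theta}}{2+\theta}(\pi u^*)^{-1}$, which is exactly the paper's computation for case (i), with the remaining cases handled "in a similar way" there just as you defer them. Your additional vertex/concavity argument justifying that $(u^*,u^*)$ is the binding point is a welcome explicit justification of a step the paper leaves implicit, but it does not change the substance of the proof.
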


\begin{proof}
Recall that the diffusion matrix of Example \ref{exa:model1} is
\begin{equation*}
\mathfrak D(u)=\begin{pmatrix}D_1(1+\epsilon a_1 u_1-\epsilon  c_1 u_2) & \epsilon D_1  b_1 u_1 \\
\epsilon D_2 b_2 u_2 & D_2(1+\epsilon  a_2 u_2 -\epsilon  c_2 u_1)
\end{pmatrix}. \label{eq:diff_matrix0}
\end{equation*}

From the numerical point of view, a realistic bound can be obtained imposing that the symmetrised diffusion matrix does not degenerate.
We consider the case (i), that is, $\bar \varepsilon_i = 1$, $\bar N_i = 1/2$. Suppose that both components $u_1,u_2$ attain the same maximum at the same point, $u_1 = u_2 = u^*$. 
We have
\begin{align*}
\det(\mathrm{Sym}(\mathfrak D))  =\mathrm{det}(\mathfrak D)-\left(\frac{\mathfrak D_{12}- \mathfrak D_{21}}{2}\right)^{2} 
= D_1 D_2 \left[ 1+ \frac{1}{2} \epsilon \pi u^*- \frac{1}{4} (\epsilon \pi u^*)^2 ( 2 + \theta) \right],
\end{align*}
where  $\theta = (D_1-D_2)^2/(4D_1 D_2) \ge 0$. Imposing that $\det(\mathrm{Sym}(D)) = 0$ leads to
\[
 \epsilon \pi u^* = \frac{1 + \sqrt{9 + 4 \theta}}{2 + \theta}
\]
as required. The other cases, as well as the non-sharp cases when, for instance, $u_1 < u_2 = u^*$, follow in a similar way. 
\end{proof}

To test the upper bounds on $\epsilon$, in the next example we run a simulation of model \eqref{model1} for increasing values of $\epsilon$. 
We expect the norm $\| u \|_{W(Q_T)}$ to increase suddenly for values 
$\epsilon > \epsilon^*$. 
In the example we consider, $\epsilon^* = 2/(\pi u^*) \approx 0.4776 $, and $\epsilon^* = 2\tilde \epsilon_0\gg \epsilon_0 \approx 2.57\times 10^{-5}$.   
In the simulations, we approximate the norm in $W_2(Q_T)$ as follows. Let $u_i(n,k)$ denote the finite-difference approximation of $u_i(x_n, t_k)$, 
where $x_n$ and $t_k$ are $J$ and $M$ equally spaced nodes in $\Omega = [-1/2, 1/2]$ and $[0, T]$ respectively, $x_n = -1/2 + n \Delta x$, $\Delta x = 1/J$ and $t_k = 0 + k \Delta t$, $\Delta t = T/M$. Then 
\begin{align} \label{discrete_norm}
\begin{aligned}
	\| u \|_{W(Q_T)} \approx &\sqrt{ \Delta x \Delta t \sum_{n,k} \left[ u_{1xx}^2(n,k) + u_{2xx}^2(n,k) + u_{1t}^2(n,k) + u_{2t}^2(n,k) \right] } \\
	& + \max_k \sqrt{ \Delta x\sum_n \left[ u^2(n,k) + u_{2}^2(n,k) + u_{1x}^2(n,k) + u_{2x}^2(n,k) \right] },
\end{aligned}
\end{align}
where $u_{ixx}(n,k) = [u_i(n+1,k) + u_i(n-1,k)-2 u_i(n,k)]/\Delta x^2$, $u_{ix}(n,k) = [u_i(n+1,k) - u_i(n-1,k)]/ (2\Delta x)$ and $u_{it}(n,k) = [u_i(n,t+1)- u_i(n,t)]/\Delta t$.  
We choose initial data $u^{0}$ such that the two components attain the same maximum $u^*$ in regions that overlap (see Figure \ref{fig:fig3}(a)), 
and zero external potentials $V_i$ so that we can ensure that the maximum of $u^{0}$ is also the global maximum. We consider the symmetric case when diffusivities, 
particle numbers and sizes are equal, $\bar \varepsilon_1 = \bar \varepsilon_2 = 1$, $\bar N_1 = \bar N_2 = 1/2$, $D_1 = D_2$, so that $\theta \equiv 0$ 
and $\epsilon^* = 2/(\pi u^*) = $ from Lemma \ref{lem:second_bound}. 
We observe that the norm $\| u \|_{W(Q_T)}$ blows up as expected for $\epsilon \ge 0.5$, when the determinant of the symmetrised diffusion matrix is negative.

\def \scc {0.6}
\def \scl {.8}
\begin{figure}
\unitlength=1cm
\begin{center}
\vspace{3mm}
\psfrag{a}[][][\scl]{ (a)} \psfrag{b}[][][\scl]{ (b)} \psfrag{c}[][][\scl]{ (c)}
\psfrag{x}[][][\scl]{$x$} \psfrag{t}[][][\scl]{$t$} \psfrag{u}[][][\scl]{$u^{0}$} \psfrag{det}[][][\scl]{$\det (\mathrm{Sym} ( \mathfrak D))$} \psfrag{norm}[][][\scl]{$\| u \|_{W(Q_T)}$} \psfrag{de}[][][\scl]{$\epsilon$}
	\includegraphics[width = .4\textwidth]{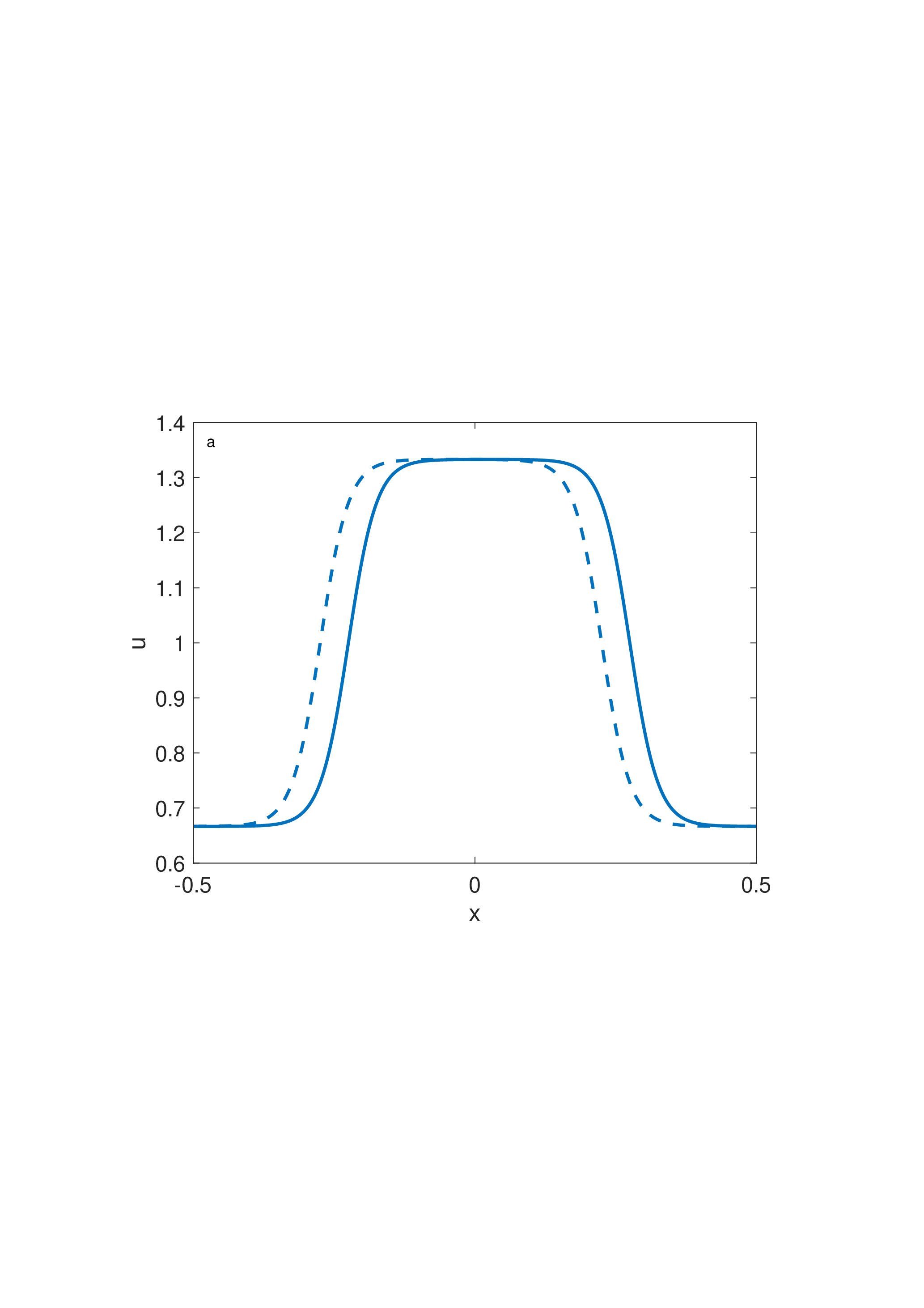} \\ \vspace{.3cm}
	\includegraphics[width = .4\textwidth]{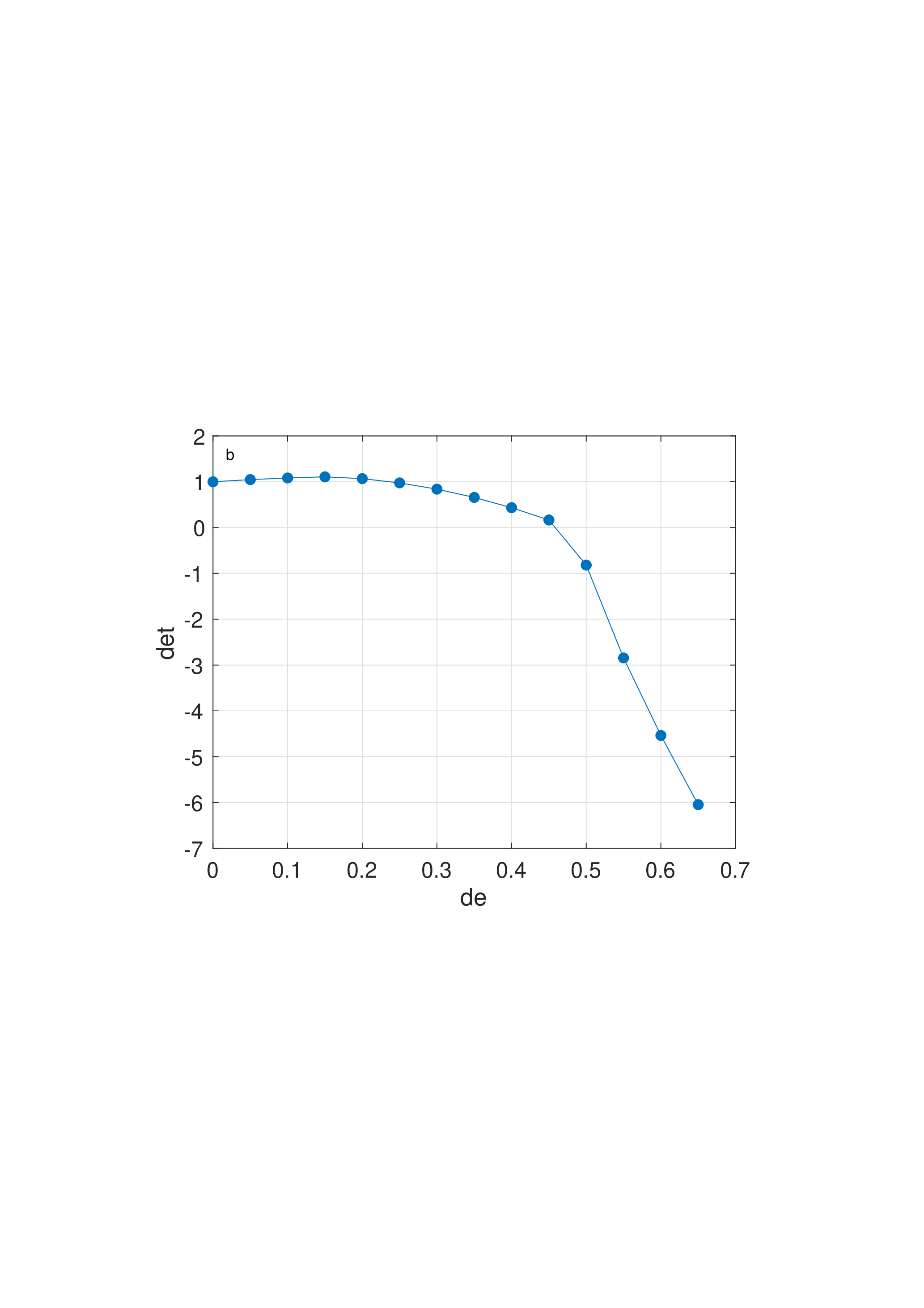} \quad 
	\includegraphics[width = .39\textwidth]{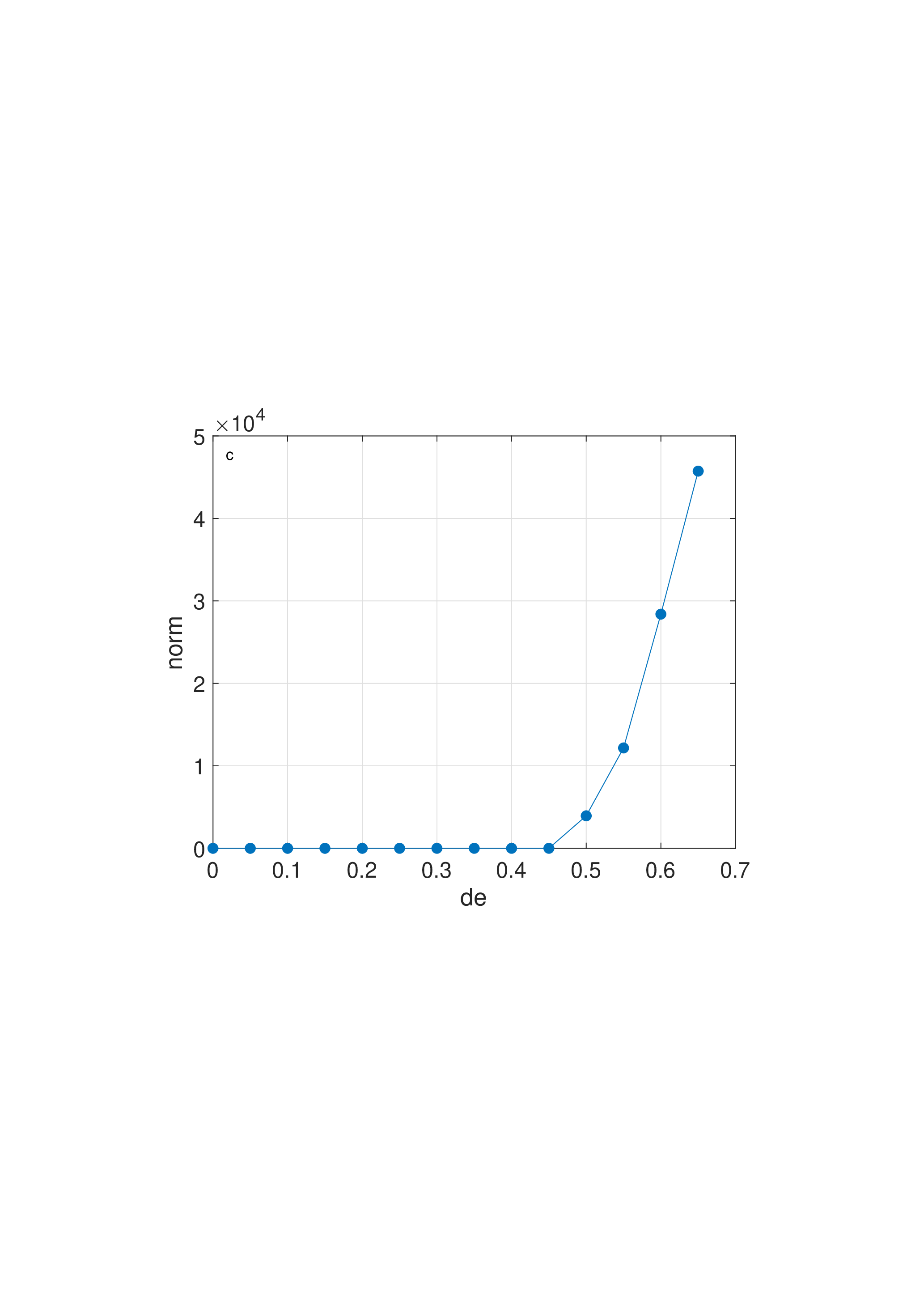}  
\caption{Simulation of model \eqref{model1} for increasing values of $\epsilon$. 
(a) Initial data: $u_1^0 = 1 + 0.5\tanh[10(2x+a-b)] + 0.5\tanh[-10(2x-a-b)]$ (solid line) and $u_2^0 =1 + 0.5\tanh[10(2x+a+b)] + 0.5\tanh[-10(2x-a+b)]$ (dashed line) with $a = 0.5$ and $b= 0.05$. 
(b)  Determinant of the symmetrised diffusion matrix \eqref{eq:diff_matrix} as a function of $\epsilon$.
(c) Norm $\| u \|_{W(Q_T)}$ computed using \eqref{discrete_norm} as a function of $\epsilon$. 
Parameters used: $\bar N_i = 1/2$, $\bar \varepsilon_i = D_i = 1$, $V_i = 0$, $d=2$, and $T = 0.1$, $J = 500$ and $M = 100$. }
\label{fig:fig3}
  \end{center}
\end{figure}

Our second set of simulations relates to stability under perturbations of the matrices $\mathfrak D$ and $\mathfrak F$. 
We compare the solutions of Example \ref{exa:model2} and  Example \ref{exa:model1}, and the solution of Example \ref{exa:model1} 
and the gradient-flow solution of Example \ref{ex:model3}. In the first case, the perturbation or differences between the models 
are at order $\epsilon$, whereas in the second case the differences are at order $\epsilon^2$. We would like to test the theoretical 
predictions of our analysis, namely, that we can control the difference between the solutions of the models in Examples \ref{exa:model2}, 
\ref{exa:model1} and \ref{ex:model3} by the difference in their diffusion and drift matrices. 

We denote by $\mathfrak D$ and $\mathfrak F$ the matrices of Example \ref{exa:model2}, and by $\widetilde {\mathfrak D}$ and $\widetilde {\mathfrak F}$ 
those of Example \ref{exa:model1}. The difference between the models is
\begin{subequations} \label{diff_models12}
\begin{equation}
\widetilde {\mathfrak D} - \mathfrak D= \epsilon \begin{pmatrix}D_1[ a_1 u_1 + u_2(\bar N_2 - c_1 )] &  D_1   u_1 (b_1 -   \bar N_2 ) \\
D_2 u_2 (b_2 - \bar N_1)  & D_2[  a_2 u_2 +  u_1 ( \bar N_1 - c_2)]
\end{pmatrix}, \label{eq:diff_modelsmatrix2}
\end{equation}
and
\begin{equation}\label{diffmodels_drift}
\begin{aligned}
\widetilde {\mathfrak F} - \mathfrak F &=  \epsilon \begin{pmatrix} -u_1 \nabla V_1 \bar N_1 &  u_1 [ (c_1 - \bar N_2) \nabla V_1 - c_1 \nabla V_2] \\
 u_2 [ (c_2 - \bar N_1) \nabla  V_2 - c_2 \nabla V_1 ] & - u_2 \nabla V_2  \bar N_2
\end{pmatrix} \\
&=  \epsilon \begin{pmatrix} - \nabla V_1 \bar N_1 &  [ (c_1 - \bar N_2) \nabla V_1 - c_1 \nabla V_2] \\
 [ (c_2 - \bar N_1) \nabla  V_2 - c_2 \nabla V_1 ] & - \nabla V_2  \bar N_2
\end{pmatrix} \circ \begin{pmatrix} u_1 &  u_1  \\
 u_2  &  u_2 
\end{pmatrix},
\end{aligned}
\end{equation}
In the second line, we rewrite the difference as two matrices, one dependent on $x$ and the 
other on $u$ (as required in our analysis), where $\circ$ denotes the Hadamard or entry-wise product of matrices. 
\end{subequations}

The difference between the model  \eqref{model1} in Example \ref{exa:model1} and the gradient-flow model \eqref{grad_flow_general} in 
Example \ref{ex:model3} is the order $\epsilon^2$ term $G$  (see \eqref{gradflow_generalasy}),  given by
\begin{equation}
	G = (\theta_1 \nabla u_1- \theta_2  \nabla u_2 )u_1 u_2 \begin{pmatrix}
		-D_1\\
		D_2
	\end{pmatrix},
\end{equation}
where  $\theta_1  =  a_1 c_1  - a_{12}  c_2$, $\theta_2  =a_2 c_2-a_{12}c_1$, and $a_{12}= (d-1)(c_1+c_2)$. Therefore, both models have  
the same drift matrices and their difference is contained in their respective diffusion  matrices. If we denote by $\hat {\mathfrak D}$ the  
diffusion matrix of model  \eqref{grad_flow_general},  then $(\hat {\mathfrak D}  - \widetilde {\mathfrak D}) \nabla u = \epsilon ^2 G$ (see \eqref{gradflow_generalasy}), that is,
\begin{equation}
	\hat {\mathfrak D} - \widetilde {\mathfrak D}  = \epsilon^2  u_1  u_2 \begin{pmatrix}
		-D_1\theta_1 & D_1\theta_2  \\
		D_2  \theta_1 & -D_2 \theta_2
	\end{pmatrix}.
\end{equation}

To test our stability results, we next compare the solutions of the models above in a simulation with initial data as in Figure \ref{fig:fig3}(a), 
equal particle numbers $\bar N_i =  1/2$, equal particle sizes $\bar\varepsilon_i= 1$ (since the lattice-based model in Example \ref{exa:model2} only 
admits equal sizes), and $D_1=1.5$, $D_2 = 1$. We plot the results in Figure \ref{fig:fig5}, using the potentials $V_1(x) = 1-\exp(-120x^2)$ and $V_2(x) = 0$ as in Figure \ref{fig:fig2}. 
As expected, the stability between models in Examples \ref{exa:model2} and \ref{exa:model1} is of order  $\epsilon$, whereas the difference  
between the solutions of models in Examples \ref{exa:model1} and \ref{ex:model3} scales with $\epsilon^2$. 

\def \scc {0.6}
\def \scl {.8}
\begin{figure}
\unitlength=1cm
\begin{center}
\vspace{3mm}
\psfrag{a}[][][\scl]{ (a)} \psfrag{b}[][][\scl]{ (b)} 
\psfrag{x}[][][\scl]{$x$} \psfrag{u2}[][][\scl]{$u_2$} \psfrag{Wnorm}[][][\scl]{$\| \Delta u \|_{W_2(Q_T)}$} \psfrag{delta}[][][\scl]{$\epsilon$}
\psfrag{dat1}[][][\scl]{$u_2$}
\psfrag{dat2}[][][\scl]{$\tilde u_2$}
\psfrag{dat3}[][][\scl]{$\hat u_2$}
\psfrag{datada1}[][][\scc]{$\|\tilde u - u\|$}
\psfrag{datada2}[][][\scc]{$\|\hat u - \tilde u\|$}
\psfrag{data3}[][][\scc]{$O(\epsilon)$}
\psfrag{data4}[][][\scc]{$O(\epsilon^2)$}
	\includegraphics[width = .4\textwidth]{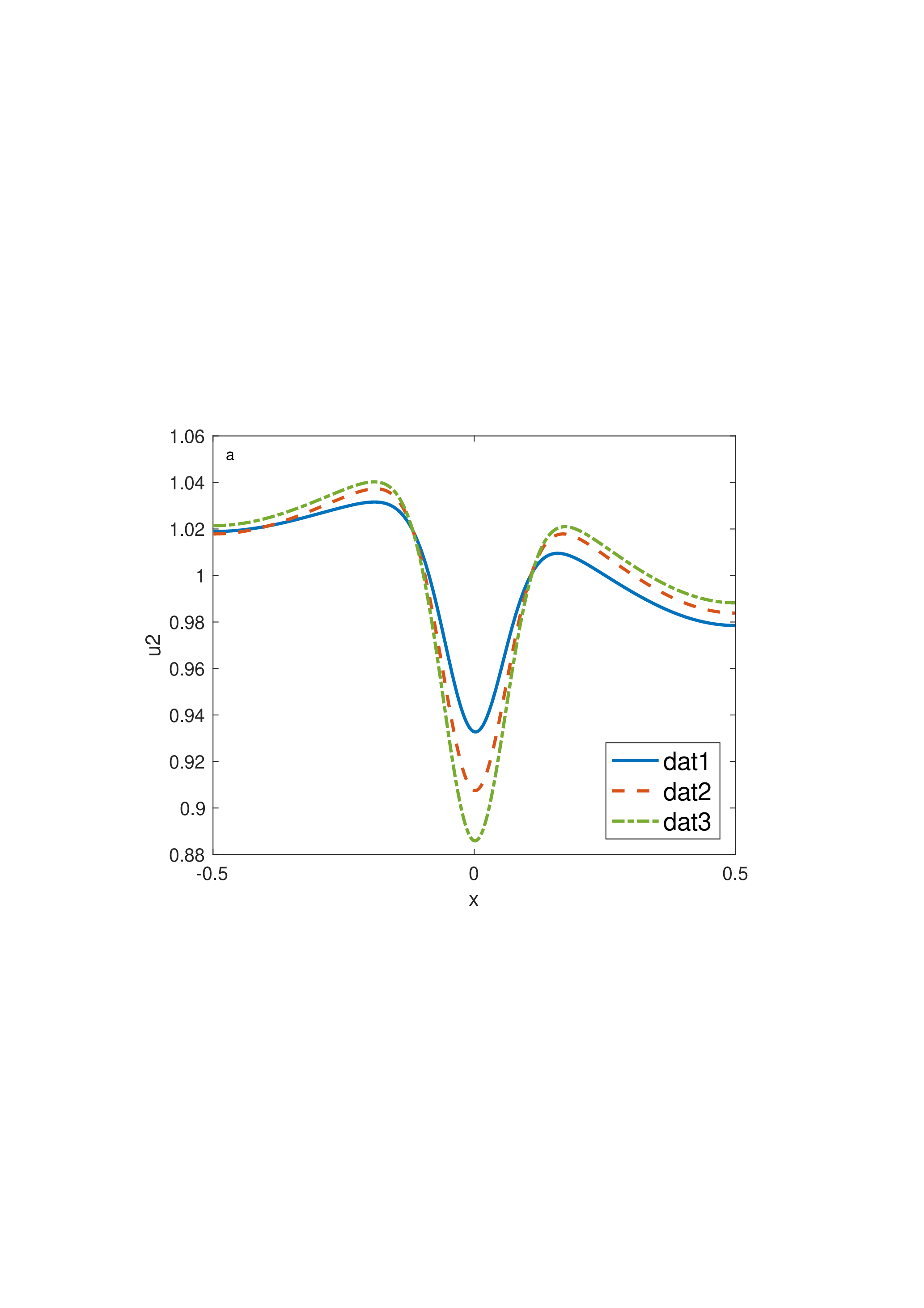} \quad 
	\includegraphics[width = .4\textwidth]{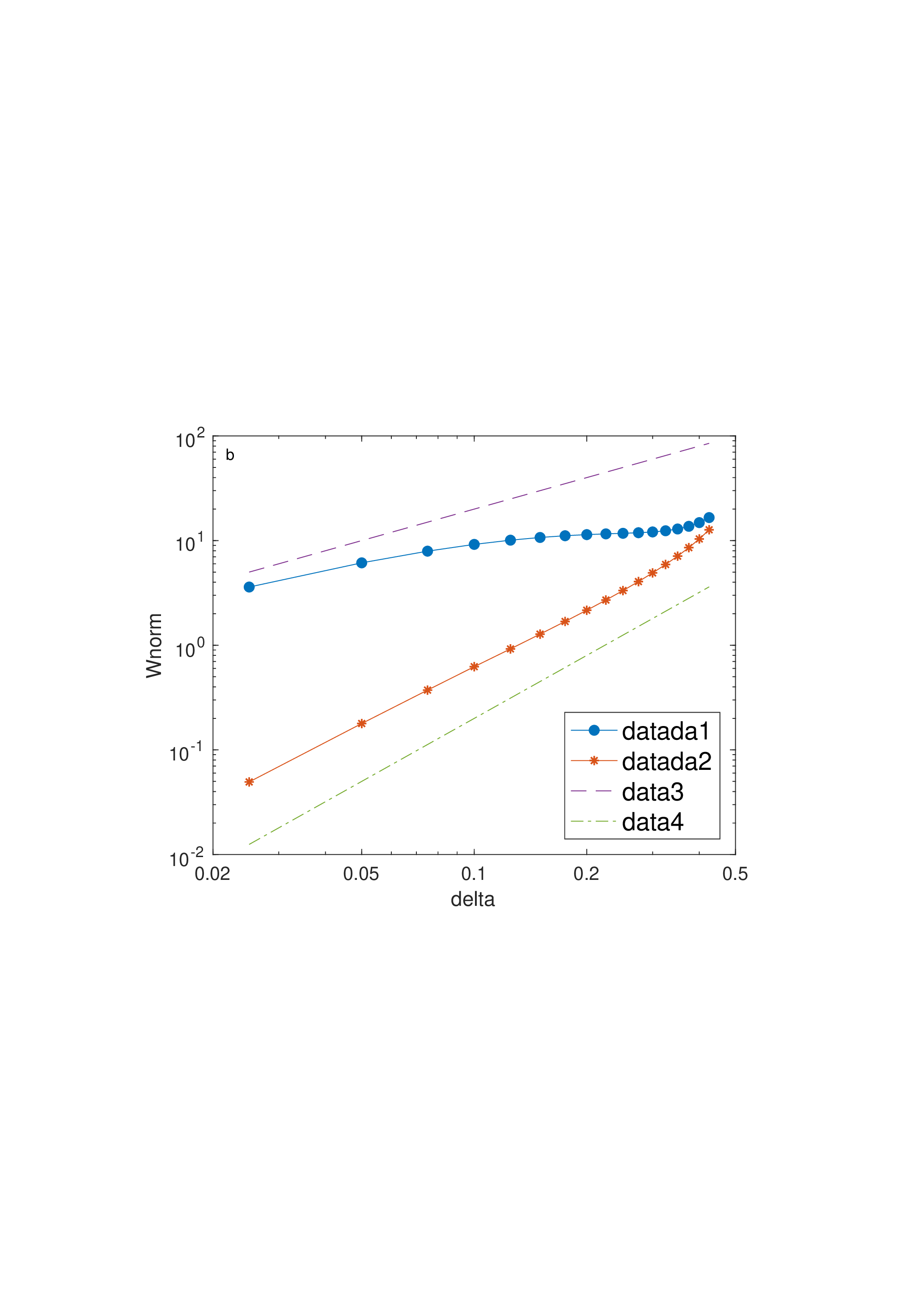}  
\caption{Comparison between the models in Examples \ref{exa:model2}, \ref{exa:model1}, and \ref{ex:model3} for increasing values of $\epsilon$. (a) Second component $u_2$ at time  $t = 0.1$ for $\epsilon = 0.25$ from model \eqref{eq:model2}  ($u_2$), model \eqref{model1} ($\tilde u_2$), and model \eqref{grad_flow_general} ($\hat u_2$).
(b) Norm in $W_2(Q_T)$ of the difference between solutions of models in Examples \ref{exa:model2}  and \ref{exa:model1}, $\|\tilde u-u\|$, and between models in Examples \ref{ex:model3} and \ref{exa:model2}, $\|\hat u-\tilde u\|$. Norm computed using \eqref{discrete_norm} as a function of $\epsilon$. Dash and dot-dash lines show curves $O(\epsilon)$ and $O (\epsilon^2)$ 
for reference. Parameters used: $\bar N_i = 1/2$, $\bar \varepsilon_i = 1$, $D_1=1.5$, $D_2=1$, $V_1(x) = 1-\exp(-120x^2)$ and $V_2(x) = 0$, $d=2$, final time $T = 1$, $J = 500$ and $M = 100$. Initial data as in Figure \ref{fig:fig3}(a).}
\label{fig:fig5}
  \end{center}
\end{figure}

\appendix
\section{Proof of Lemma~\ref{lem:313}} \label{sec:appendix}

Our approach is classical and the parabolic estimate mostly follows
from an elliptic regularity estimate. Yet, for general cross-diffusion
systems, it is well known that such elliptic results do not always
hold, including for quasilinear systems with analytic dependence on
$u$ (see for example \cite{giaquinta_martinazzi} and \cite{stara1995some}). Therefore this result needs to
be proved in the case at hand. 
Some of the more technical
arguments are detailed in well known references (for example \cite{grisvard-1985,troianiello-1987}
concerning elliptic regularity and 
 \cite{dautray-lions-93,evans:98,ladyzhenskaia1988linear,lions-magenes-68} for the parabolic case), so we safely skip a certain number
of intermediate steps, and we give the relevant references.

The following lemma provides the key regularity result. 
\begin{lmm}\label{lem:312}
Given $\omega\in C^{1}\left(\overline{\Omega};\mathbb{R}^+\right)$,
for any $u^{0}\in H^{2}\left(\Omega;\mathbb{R}^{m}\right)$ and 
\[
g_{i}\in C\left(\left[0,T\right];H^{1}\left(\Omega;\mathbb{R}^{d}\right)\right)\cap H^{1}\left(0,T;L^{2}\left(\Omega;\mathbb{R}^{d}\right)\right),\quad i=1,\ldots,m
\]
the weak solution $u$ of 
\begin{align}
\begin{aligned}\omega\partial_{t}u_{i}-\partial_{\alpha}\left[D_{i}^{\alpha\beta}(x,t)\partial_{\beta}u_{i}
+F_{ij}^{\alpha}(x,t)u_{j}+g_{i}^{\alpha}\right] & =0 &  & \text{in}\quad\Omega,\\
\left[D_{i}^{\alpha\beta}(x,t)\partial_{\beta}u_{i}
+F_{ij}^{\alpha}\left(x,t\right]u_{j}+g_{i}^{\alpha}\right]\cdot\nu^\alpha & =0 &  & \text{on}\quad\partial\Omega,\\
u_{i}(0) & =u_{i}^{0} &  & \text{in}\quad\Omega,
\end{aligned}
\label{eq:bvpplusdrift}
\end{align}
for $i=1,\ldots,m$, is unique in $L^{2}\left(0,T;H^{1}\left(\Omega\right)\right)\cap C\left(\left[0,T\right];L^{2}\left(\Omega\right)\right)$
with $\partial_{t}u\in L^{2}(0,T;(H^{1}(\Omega))^{\prime})$.
If the compatibility condition 
\begin{equation}
\left[D_{i}^{\alpha\beta}(x,t)\partial_{\beta}u^0_{i}+F_{ij}^{\alpha}u^0_{j}
+g_{i}^{\alpha}\right]\cdot\nu^\alpha=0 \quad \text{on }\partial\Omega,\quad i=1,\ldots,m\label{eq:compatcond}
\end{equation} 
holds, then $u$ satisfies 
\begin{equation}\label{eq:simple-est-1}
\left\Vert u\right\Vert _{W\left(Q_{T}\right)}\leq \frac{1}{2}C_{T}\left(\left\Vert u^{0}\right\Vert _{H^{2}\left(\Omega\right)}+\left\Vert g\right\Vert _{C\left(\left[0,T\right];H^{1}\left(\Omega\right)\right)\cap H^{1}\left(0,T;L^{2}\left(\Omega\right)\right)}\right),
\end{equation}
where the constant $C_{T}$ is given by \eqref{eq:defC6} and depends
on m, $\lambda$, $T$, the $C^{1}$ norms of $\omega$, $D$, and $F$, and the domain $\Omega$ only.

Furthermore, if $F_{ij}^{\alpha}=D_{i}^{\alpha\beta}\partial_{\beta}V_{i}$
with $V_{i}\in C^{1}\left(\overline{\Omega};\mathbb{R}\right)$, and for each $i$, $D_{i}$ and $V_{i}$ do not depend on time, then 
\begin{equation}\label{eq:simple-est-2}
\left\Vert u\right\Vert _{W\left(Q_{T}\right)}\leq \frac{1}{2}C_{\infty}\left(\left\Vert u^{0}\right\Vert _{H^{2}\left(\Omega\right)}+\left\Vert g\right\Vert _{C\left(\left[0,T\right];H^{1}\left(\Omega\right)\right)\cap H^{1}\left(0,T;L^{2}\left(\Omega\right)\right)}\right),
\end{equation}
 where $C_{\infty}$, given by \eqref{eq:C6prime}, depends on $m,\lambda$,
the $C^{1}$norms of $\omega$, $D$, $F$ and the domain $\Omega$
only. In particular, $C_{\infty}$ is independent of $T$. 
\end{lmm}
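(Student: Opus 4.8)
The decisive structural feature of \eqref{eq:bvpplusdrift} is that its principal part is \emph{diagonal}: the $i$-th equation contains only $u_i$ under the second-order operator, the components being coupled solely through the zeroth-order term $F_{ij}^{\alpha}u_j$ and the prescribed data $g$. Consequently the well-known obstructions to regularity for genuinely coupled parabolic systems are not active here, and the plan is to reduce the whole statement to scalar parabolic and elliptic theory while tracking every constant explicitly. First I would establish existence and uniqueness of a weak solution in $L^{2}(0,T;H^{1}(\Omega))\cap C([0,T];L^{2}(\Omega))$ with $\partial_t u\in L^{2}(0,T;(H^{1}(\Omega))')$ by a Galerkin approximation (or Lions' theorem): ellipticity \eqref{eq:D-Elliptic} yields a Gårding inequality once the first-order term $F_{ij}^{\alpha}u_j$ is absorbed by Young's inequality, at the cost of a constant depending on $\lambda$, $m$ and $\|F\|_{C^{0}}$, and the positive weight $\omega\in C^{1}(\overline{\Omega})$, being bounded below, only replaces the $L^{2}$ norm by an equivalent weighted one. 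The usual energy identity gives uniqueness and the a priori bound $\sup_{[0,T]}\|u\|_{L^{2}}+\|u\|_{L^{2}(0,T;H^{1})}\le C(\|u^{0}\|_{L^{2}}+\|g\|)e^{CT}$.

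For the $W(Q_T)$ estimate I would combine two mechanisms. (a) Time regularity: differentiating \eqref{eq:bvpplusdrift} in $t$ (rigorously, using difference quotients in time, so that the merely $C^{1}$ coefficients are differentiated at most once) shows that $w=\partial_t u$ weakly solves a problem of the same type, with divergence-form source built from $\partial_t D$, $\partial_t F$, $\partial_t g$ (all in $L^{2}(Q_T)$), a zeroth-order term $(\partial_t\omega)w$, conormal boundary condition obtained by differentiating \eqref{eq:compatcond}, and initial datum $w(0)=\omega^{-1}\partial_\alpha[D_i^{\alpha\beta}(\cdot,0)\partial_\beta u^{0}_i+F_{ij}^{\alpha}(\cdot,0)u^{0}_j+g_i^{\alpha}(\cdot,0)]\in L^{2}(\Omega)$ (here $u^{0}\in H^{2}(\Omega)$ and $g(0)\in H^{1}(\Omega)$ are used). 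The compatibility condition \eqref{eq:compatcond} is exactly what guarantees that $\partial_t u$ is continuous in $L^{2}$ up to $t=0$ with value $w(0)$, i.e.\ that there is no initial-time boundary layer; the energy estimate above applied to the $w$-problem then gives $\partial_t u\in C([0,T];L^{2}(\Omega))\cap L^{2}(0,T;H^{1}(\Omega))$, quantitatively. (b) Spatial regularity: for a.e.\ fixed $t$ I would read \eqref{eq:bvpplusdrift} as a \emph{scalar} conormal elliptic problem $-\partial_\alpha(D_i^{\alpha\beta}(\cdot,t)\partial_\beta u_i(t))=-\omega\,\partial_t u_i(t)+\partial_\alpha(F_{ij}^{\alpha}u_j(t)+g_i^{\alpha}(t))$ with natural datum $-(F_{ij}^{\alpha}u_j+g_i^{\alpha})\nu_\alpha\in H^{1/2}(\partial\Omega)$, and invoke global $H^{2}$ regularity on the $C^{2}$ domain $\Omega$ with Lipschitz coefficients to get $\sum_i\|u_i(t)\|_{H^{2}}\le C(\|\partial_t u(t)\|_{L^{2}}+\|u(t)\|_{H^{1}}+\|g(t)\|_{H^{1}})$; after interpolating $\|u\|_{H^{1}}\le\eta\|u\|_{H^{2}}+C_\eta\|u\|_{L^{2}}$ and absorbing, this together with (a) yields $\sup_{[0,T]}\|u\|_{H^{2}}\le C(\ldots)$, with continuity in $t$ obtained by applying the same elliptic estimate to $u(t)-u(s)$ (the $t$-increments of $D$ being $O(|t-s|)$ in $C^{1}$). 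Collecting the bounds produces \eqref{eq:simple-est-1}, with the asserted dependence of $C_T$ on $m$, $\lambda$, $T$, the $C^{1}$ norms of $\omega,D,F$ and $\Omega$; the $T$-dependence enters only through the two Gronwall factors.

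Under hypothesis \textbf{(H)} the exponential factors disappear. With $D_i$ time-independent and $F_{ij}^{\alpha}=D_i^{\alpha\beta}\partial_\beta V_i$ a gradient drift, the substitution $u_i=e^{-V_i}v_i$ eliminates the first-order term, because $D_i^{\alpha\beta}(\partial_\beta u_i+(\partial_\beta V_i)u_i)=D_i^{\alpha\beta}e^{-V_i}\partial_\beta v_i$; the problem becomes $e^{-V_i}\partial_t v_i-\partial_\alpha(D_i^{\alpha\beta}e^{-V_i}\partial_\beta v_i+g_i^{\alpha})=0$, which is of the form \eqref{eq:bvpplusdrift} with weight $e^{-V_i}$, no first-order term, and a symmetric, time-independent principal part. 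For such a problem the energy estimates of the previous two paragraphs close \emph{without} the lower-order correction, hence without any $e^{CT}$ factor --- typically $\tfrac{d}{dt}\int e^{-V_i}|v_i|^{2}\le C\|g\|^{2}$ and, using self-adjointness in the weighted inner product, $\tfrac{d}{dt}\int D_i^{\alpha\beta}e^{-V_i}\partial_\alpha v_i\,\partial_\beta v_i\le C\|g\|_{H^{1}}^{2}$. Reversing the substitution costs only $\max_{\overline{\Omega}}e^{\pm V_i}$ and $\|\nabla V_i\|_\infty$, i.e.\ the $C^{1}$ norm of $V_i$, and produces \eqref{eq:simple-est-2} with $C_\infty$ depending only on $m$, $\lambda$, $\Omega$ and the $C^{1}$ norms of $\omega,D,F$, but not on $T$.

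The main obstacle is the spatial step (b): securing the \emph{global}, up-to-the-boundary $H^{2}$ estimate for the conormal (Neumann-type) scalar problem under only $C^{1}$ coefficients and a $C^{2}$ domain, and --- more delicately --- with constants explicit enough to be propagated through both energy estimates and, in the \textbf{(H)} case, demonstrably independent of $T$. A secondary technical point is the rigorous justification of step (a) by difference quotients, making sure the additional regularity survives the limited smoothness of the coefficients and that \eqref{eq:compatcond} is invoked correctly to exclude an initial layer; the route through all this is classical (see the references cited at the start of the appendix), but it is the explicit constant bookkeeping, rather than any single estimate, that the argument really has to deliver.
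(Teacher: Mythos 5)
Your proposal is correct and follows essentially the same route as the paper: Lions' theorem for existence and uniqueness, use of the compatibility condition \eqref{eq:compatcond} to define $\left.\partial_{t}u\right|_{t=0}$ in $L^{2}(\Omega)$ and to show that $\partial_{t}u$ solves a problem of the same type (justified by difference quotients), global $H^{2}$ elliptic regularity for the conormal problem at each fixed time, and the change of unknown $v_i=u_i e^{V_i}$ under \textbf{(H)} to remove the drift and obtain $T$-independent constants. The only difference is technical bookkeeping: the paper first factorises $D=A^{2}$, $F=AB$, $g=Af$ and tests against $\partial_{t}u-\omega^{-1}\div(A\Phi)$ with $\Phi=A\nabla u+Bu+f$ to control $\partial_t u$, $\div(A\Phi)$ and $\Phi$ simultaneously, rather than differentiating the equation in time directly, but this does not change the substance of the argument.
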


\begin{proof}
Note that no coupling appears in \eqref{eq:bvpplusdrift}, therefore
the index $i$ can be dropped, as the result relates to equations,
and not systems. For the purpose of this proof, it is convenient to
modify the formulation of the problem to simplify the computations.
We will write $D=A^{2}$, with $A\in C^{1}\left(\overline{Q_{T}};\mathbb{R}^{d\times d}\right)$
symmetric, positive definite and $A$ satisfies
\begin{equation}
\left\Vert A^{-1}(x,t)\right\Vert _{\infty}\leq\lambda^{-1/2}\text{ in }Q_{T}.\label{eq:ellAbd}
\end{equation}
We write $F=AB,$ and $g=Af$, so that the evolution problem under
consideration can be written under the form 
\begin{equation}
	\label{eq:AB}
	\omega\partial_{t}u-\div\left(A^{2}\nabla u+ABu+Af\right)=0 \quad \text{in }\mathcal{D}^{\prime}\left(\Omega\right).
\end{equation}
The \emph{a priori}\, bounds we will use are 
\begin{align}
\left\Vert A\right\Vert _{L^{\infty}\left(Q_{T}\right)}+\left\Vert \nabla A\right\Vert _{L^{\infty}\left(Q_{T}\right)} & \leq M_{A},\label{eq:bdA}
\\
\left\Vert B\right\Vert _{L^{\infty}\left(Q_{T}\right)}+\left\Vert B\right\Vert _{L^{\infty}\left(Q_{T}\right)}+\left\Vert \nabla B\right\Vert _{L^{\infty}\left(Q_{T}\right)} & \leq M_{B},\label{eq:bdB}
\\
\left\Vert \omega^{-1}\right\Vert _{L^{\infty}\left(Q_{T}\right)}+\left\Vert \omega\right\Vert _{L^{\infty}\left(Q_{T}\right)}+\left\Vert \nabla\omega\right\Vert _{L^{\infty}\left(Q_{T}\right)} & \leq M_{\omega},\label{eq:bdOm}
\end{align}
and 
\begin{equation}
\left\Vert \partial_{t}A\right\Vert _{C\left(\overline{Q_{T}}\right)}+\left\Vert A^{-1}\partial_{t}A\right\Vert _{C\left(\overline{Q_{T}}\right)}+\left\Vert \partial_{t}A^{-1}\right\Vert _{C\left(\overline{Q_{T}}\right)}+\left\Vert \partial_{t}\left(A^{-1}B\right)\right\Vert _{C\left(\overline{Q_{T}}\right)}\leq M_{T}.\label{eq:bdT}
\end{equation}

For $a.e.$ $t\in[0,T]$, we define $\mathcal{A}\left(t,u,v\right):H^{1}(\Omega;\mathbb{R})\times H^{1}(\Omega;\mathbb{R})\to\mathbb{R}$
by 
\begin{equation}
\mathcal{A}(t,u,v)=\int_{\Omega}\left(A^{2}\right)^{\alpha\beta}\left(t,x\right)\partial_{\beta}u\partial_{\alpha}v \, \mathrm{d}x+\int_{\Omega}\left(AB\right)^{\alpha}\left(t,x\right)u\partial_{\alpha}v\, \mathrm{d}x.\label{eq:defnabil}
\end{equation}
Using the \emph{a priori}\, bounds \eqref{eq:bdA} and \eqref{eq:bdB}, we
find the upper bound
\[
\mathcal{A}(t,u,v)\leq M_{A}\left(M_{A}+M_{B}\right)\|u\|_{H^{1}(\Omega)}\|v\|_{H^{1}(\Omega)}.
\]
Furthermore, using \eqref{eq:ellAbd} as well, we have the lower bound
\begin{equation}
\mathcal{A}(t,u,v)\geq\lambda\|u\|_{H^{1}(\Omega)}^{2}-M_{A}M_{B}\|u\|_{L^{2}(\Omega)}\|u\|_{H^{1}(\Omega)}\geq\frac{1}{2}\lambda\|u\|_{H^{1}(\Omega)}^{2}-\frac{1}{2\lambda}M_{A}^2M_{B}^2\|u\|_{L^{2}(\Omega)}^{2}.\label{eq:coercivebd-A}
\end{equation}
We may therefore apply the parabolic version of the Lax--Milgram
Theorem of Lions \cite{brezis,lions-magenes-68} to deduce that there
exists a unique solution of (\ref{eq:bvpplusdrift}) $u\in L^{2}(0,T;H^{1}(\Omega;\mathbb{R}))\cap C([0,T];L^{2}(\Omega;\mathbb{R}))$
with $\partial_{t}u\in L^{2}(0,T;H^{1}(\Omega;\mathbb{R})^{\prime})$.

We now derive an explicit bound. Integrating \eqref{eq:AB} by parts against
$u$ we find
\begin{equation*}
	\partial_{t}\frac{1}{2}\int_{\Omega}\omega u^{2} \,\mathrm{d}x +\int_{\Omega}A^{2}(x,t)\nabla u\cdot\nabla u \,\mathrm{d}x +\int_{\Omega}uAB\cdot\nabla u \,\mathrm{d}x +\int_{\Omega}Af\cdot\nabla u \,\mathrm{d}x =0.
\end{equation*}
Thus, using (\ref{eq:coercivebd-A}) and Cauchy--Schwarz
\[
\partial_{t}\left(\frac{1}{2}\left\Vert \sqrt{\omega}u\right\Vert _{L^{2}\left(\Omega\right)}^{2}\right)+\frac{1}{2}\|A\nabla u\|_{H^{1}(\Omega)}^{2}\leq\left\Vert f\right\Vert _{L^{2}\left(\Omega\right)}^{2}+\left\Vert \omega^{-\frac{1}{2}}B\right\Vert _{L^{\infty}(\Omega)}^{2}\left\Vert \sqrt{\omega}u\right\Vert _{L^{2}\left(\Omega\right)}^{2},
\]
which leads to two bounds
\begin{equation}
\left\Vert u\right\Vert _{C\left(\left[0,T\right],L^{2}\left(\Omega\right)\right)}\leq M_{\omega}^{\frac{1}{2}}\left(\exp\left(\sqrt{2}M_{\omega}M_{B}T\right)\left\Vert f\right\Vert _{L^{2}\left(Q_{T}\right)}+M_{\omega}^{\frac{1}{2}}\left\Vert u^{0}\right\Vert _{L^{2}\left(\Omega\right)}\right),\label{eq:step089}
\end{equation}
and
\begin{equation}
\sqrt{\lambda/2}\|\nabla u\|_{L^{2}\left(0,T;L^{2}(\Omega)\right)}\leq\sqrt{1/2}\|A\nabla u\|_{L^{2}\left(0,T;L^{2}(\Omega)\right)}
\leq\sqrt{\frac{M_\omega}{2}}\left\Vert u^{0}\right\Vert _{L^{2}\left(\Omega\right)}+\left\Vert f\right\Vert _{L^{2}\left(Q_{T}\right)}.\label{eq:step090}
\end{equation}
Note that $\int_{\Omega}u \,\mathrm{d} x =\int_{\Omega}u^{0} \,\mathrm{d}x$ for all times. 
As a result, 
\begin{align}
\left\Vert u\right\Vert _{L^{2}\left(0,T;H^{1}(\Omega)\right)} 
& \leq\sqrt{T}\left|\frac{1}{\left|\Omega\right|}\int_{\Omega}u^{0} \,\mathrm{d}x \right|+\left\Vert u-\frac{1}{\left|\Omega\right|}\int_{\Omega}u \,\mathrm{d}x\right\Vert _{L^{2}\left(0,T;L^{2}(\Omega)\right)}
+\left\Vert \nabla u\right\Vert _{L^{2}\left(0,T;L^{2}(\Omega)\right)} \nonumber \\
 & \leq\sqrt{T}\left|\frac{1}{\left|\Omega\right|}\int_{\Omega}u^{0} \,\mathrm{d}x \right|+(C_{P}\left(\Omega\right)+1)\lambda^{-1/2}\left\Vert A\nabla u\right\Vert _{L^{2}\left(0,T;L^{2}(\Omega)\right)} \nonumber  \\
 & \leq C_{1}\left(\left\Vert u^{0}\right\Vert _{L^{2}\left(\Omega\right)}+\left\Vert f\right\Vert _{L^{2}\left(Q_{T}\right)}\right),\label{eq:step091}
\end{align}
where $C_{P}(\Omega)$ is the Poincar\'e--Wirtinger constant, and
\begin{equation}
C_{1}=\sqrt{T}\left|\Omega\right|^{-1/2}+(C_{P}\left(\Omega\right)+1)\sqrt{\frac{M_\omega}{2\lambda}}.\label{eq:defC1}
\end{equation}
Let us now focus on higher regularity. We are going to show that 
\[
u\in C\left([0,T];H^{2}(\Omega)\right)\cap H^{1}\left(0,T;H^{1}(\Omega)\right).
\]
We write
\begin{equation}
\Phi=A\nabla u+Bu+f.\label{eq:defPhi}
\end{equation}
Thanks to \eqref{eq:step090} and \eqref{eq:step091}, we have 
\[
\|\Phi\|_{L^{2}\left(0,T;L^{2}(\Omega)\right)}\leq C_{2}\left(\left\Vert u^{0}\right\Vert _{L^{2}\left(\Omega\right)}+\left\Vert f\right\Vert _{L^{2}\left(Q_{T}\right)}\right),
\]
with
\begin{equation}
C_{2}=1+\sqrt{M_\omega}+M_{B}C_{1}.\label{eq:defC2}
\end{equation}
Next, we are going to test \eqref{eq:bvpplusdrift} against $\eta = \partial_{t}u-\omega^{-1}\div\left(A\Phi\right)$.
Notice that we have to ensure that $\eta$ is a valid test function. We just sketch the procedure, namely we consider
$\eta_{\tau,h} = \Delta_{\tau}u-\omega^{-1}\Delta_{h}^{\alpha}\left(A^{\alpha\beta}\Phi^{\beta}\right)$,
where the difference quotient time derivative is given by $\Delta_{\tau}u=\left(u\left(\cdot+\tau\right)-u\left(\cdot\right)\right)\tau^{-1}$ 
and difference quotient space derivatives in direction $i$ is given by $\Delta_{-h}^{\alpha}\psi=\left(\psi\left(\cdot+h\text{e}_{\alpha}\right)-\psi\left(\cdot\right)\right)h^{-1}$.
We have to test \eqref{eq:bvpplusdrift} against $\eta_{\tau,h}$ and subsequently pass to the limit for $\tau,h \to 0$, paying attention
to the direction normal to the boundary near $\partial\Omega$.
This step is somewhat technical but straightforward  and it justifies the following calculations rigorously. To simplify
the exposition, we use directly $\partial_{t}u-\omega^{-1}\div\left(A\Phi\right)$
as the test function in the following steps, and obtain 
\begin{equation}
\int_{\Omega}\omega\left(\partial_{t}u\right)^{2} \,\mathrm{d}x +\int_{\Omega}\left(A\Phi\right)\cdot\nabla\left(-\omega^{-1}\div\left(A\Phi\right)\right) \,\mathrm{d}x -2\int_{\Omega}\partial_{t}u \div(A\Phi) \,\mathrm{d}x=0\label{eq:step101-1}
\end{equation}
As $A\Phi\cdot\nu=0$, we find that 
\begin{equation}
\int_{\Omega}\left(A\Phi\right)\cdot\nabla\left(-\omega^{-1}\div\left(A\Phi\right)\right)\,\mathrm{d}x =\int_{\Omega}\omega^{-1}\left(\div\left(A\Phi\right)\right)^{2} \,\mathrm{d}x .\label{eq:step102}
\end{equation}
Let us now turn to the mixed term. We have 
\begin{align} \label{eq:step1011}
-2\int_{\Omega}\partial_{t}u \div (A\Phi)\,\mathrm{d}x  &=2\int_{\Omega}\partial_{t}\left(\left(A^{-1}A\right)\nabla u\right)\cdot\left(A\Phi\right) \,\mathrm{d}x  =2\int_{\Omega}\left[\partial_{t}(A\nabla u)+A\partial_{t}(A^{-1})A\nabla u\right]\cdot\Phi \,\mathrm{d}x \\
 & =2\int_{\Omega}\left[\partial_{t}(\Phi)+A\partial_{t}(A^{-1})\Phi\right]\cdot\Phi \,\mathrm{d}x  -2\int_{\Omega}\left[\partial_{t}(Bu+f)+A\partial_{t}(A^{-1})(Bu+f)\right]\cdot\Phi \,\mathrm{d}x. \nonumber
\end{align}
Inserting  \eqref{eq:step102} and \eqref{eq:step1011} into \eqref{eq:step101-1}
and using Cauchy--Schwarz, we obtain
\begin{align}
\left\Vert \sqrt{\omega}\partial_{t}u\right\Vert _{L^{2}(\Omega)}^{2}&+\int_{\Omega}\omega^{-1}\div\left(A\Phi\right)^{2} \,\mathrm{d}x +\partial_{t}\left\Vert \Phi\right\Vert _{L^{2}\left(\Omega\right)}^{2}
\nonumber  \\
&\leq  2M_{T}M_{A}\left(\left\Vert \Phi\right\Vert _{L^{2}\left(\Omega\right)}^{2}+\left\Vert f\right\Vert _{L^{2}\left(\Omega\right)}\left\Vert \Phi\right\Vert _{L^{2}\left(\Omega\right)}\right)+2\left\Vert \partial_{t}f\right\Vert _{L^{2}\left(\Omega\right)}\left\Vert \Phi\right\Vert _{L^{2}\left(\Omega\right)}
\label{eq:ineq}\\
&\quad + 2M_{T}M_{A}M_{B}\left\Vert u\right\Vert _{L^{2}\left(\Omega\right)}\left\Vert \Phi\right\Vert _{L^{2}\left(\Omega\right)}+2M_{B}M_{\omega}^{\frac{1}{2}}\left\Vert \sqrt{\omega}\partial_{t}u\right\Vert _{L^{2}\left(\Omega\right)}\left\Vert \Phi\right\Vert _{L^{2}\left(\Omega\right)}.
\nonumber 
\end{align}
Using Young's inequality, we recombine inequality \eqref{eq:ineq} to find 
\[
  \frac{1}{2}\left\Vert \sqrt{\omega}\partial_{t}u\right\Vert _{L^{2}\left(\Omega\right)}^{2}+\left\Vert \sqrt{\omega}\div\left(A\Phi\right)\right\Vert _{L^{2}\left(\Omega\right)}^{2}+\partial_{t}\left\Vert \Phi\right\Vert _{L^{2}\left(\Omega\right)}^{2}
\leq  \left(2C_{3}+1\right)\left\Vert \Phi\right\Vert _{L^{2}\left(\Omega\right)}^{2}+M_{B}^{2}\left\Vert u\right\Vert _{L^{2}\left(\Omega\right)}^{2}+\left\Vert \partial_{t}f\right\Vert _{L^{2}\left(\Omega\right)}^{2}+\left\Vert f\right\Vert _{L^{2}\left(\Omega\right)}
\]
with 
\[
C_{3}=2M_{T}M_{A}\left(1+2M_{T}M_{A}\right)+2M_{B}^{2}M_{\omega}.
\]
Integrating in time, we find 
\begin{align*}
\left\Vert \Phi\right\Vert _{C\left(\left[0,T\right],L^{2}\left(\Omega\right)\right)}^{2} & +\frac{1}{2}\left\Vert \sqrt{\omega}\partial_{t}u\right\Vert _{L^{2}\left(Q_{T}\right)}^{2}+\left\Vert \sqrt{\omega}\div\left(A\Phi\right)\right\Vert _{L^{2}\left(Q_{T}\right)}^{2}\\
 & \leq C_{4}\left(\left\Vert u^{0}\right\Vert _{L^{2}\left(\Omega\right)}+\left\Vert f\right\Vert _{L^{2}\left(Q_{T}\right)}\right)^{2}\\
 & \quad +\left\Vert A\nabla u^{0}+Bu^{0}+f\left(t=0\right)\right\Vert _{L^{2}(\Omega)}^{2}+\left\Vert \partial_{t}f\right\Vert _{L^{2}\left(Q_{T}\right)}^{2}+\left\Vert f\right\Vert _{L^{2}\left(Q_{T}\right)}^{2},
\end{align*}
with 
\begin{equation}
C_{4}=\left(2C_{3}+1\right)C_{2}^{2}+M_{B}^{2}C_{1}^{2}.\label{eq:defC4a}
\end{equation}
Let us now check that this allows us to define $\left.\partial_{t}u\right|_{t=0}$
in an appropriate sense. Since
\[
\left\Vert \nabla u\right\Vert _{C\left(\left[0,T\right],L^{2}\left(\Omega\right)\right)}\leq\lambda^{-\frac{1}{2}}\left(\left\Vert \Phi\right\Vert _{C\left(\left[0,T\right],L^{2}\left(\Omega\right)\right)}+M_{B}\left\Vert u\right\Vert _{C\left(\left[0,T\right],L^{2}\left(\Omega\right)\right)}+\left\Vert f\right\Vert _{C\left(\left[0,T\right],L^{2}\left(\Omega\right)\right)}\right),
\]
for any $v\in H^{1}\left(\Omega\right)$, the map 
\[
t\to\int_{\Omega} \left[ A(x,t)\nabla u\cdot\nabla v+Bu \cdot \nabla v+fu\cdot\nabla v \right] \, \text{d}x
\]
is continuous on $[0,T]$. In other words, we define $\left.\partial_{t}u\right|_{t=0}\in\left(H^{1}(\Omega)\right)^{\prime}$
as follows
\begin{align*}
\int_{\Omega}\left.\partial_{t}u\right|_{t=0}v\, \text{d}x & =\lim_{t\downarrow0}\int_{\Omega} \left[ A(x,t)\nabla u\cdot\nabla v+B(x,t)u.\nabla v+f(x,0)\cdot\nabla v \right] \text{d}x\\
 & =\int_{\Omega} \left[A(x,0)\nabla u^{0}\cdot\nabla v+B(x,0)u^{0}.\nabla v+f(x,0)\cdot\nabla v \right] \text{d}x,
\end{align*}
provided that the compatibility condition \eqref{eq:compatcond} holds,
that is, 
\[
\left[A(x,0)\nabla u^{0}-B(x,0)u^{0}-f(x,0)\right]\cdot\nu=0.
\]
An integration by parts then shows that 
\[
\int_{\Omega}\left.\partial_{t}u\right|_{t=0}v \, \text{d}x=\int_{\Omega}\div\left[A(x,0)\nabla u^{0}+B(x,0)u^{0}+f(x,0)\right]v \, \text{d}x,
\]
which, in turn, shows that $\left.\partial_{t}u\right|_{t=0}\in L^{2}(\Omega)$
and
\begin{equation}\label{eq:dtu0}
\left\Vert \left.\partial_{t}u\right|_{t=0}\right\Vert _{L^{2}\left(\Omega\right)}\leq\left(M_{A}+M_{B}\right)\left(\left\Vert u^{0}\right\Vert _{H^{2}\left(\Omega\right)}+\left\Vert f\right\Vert _{C\left(\left[0,T\right];H^{1}\left(\Omega\right)\right)}\right). 
\end{equation}
We now notice that $\partial_{t}u$ is a weak solution of \eqref{eq:bvpplusdrift},
where $f$ is replaced by $\partial_{t}f+\partial_{t}A\nabla u+\partial_{t}Bu$
and $u^{0}$ is replaced by $\left.\partial_{t}u\right|_{t=0}$. 
From \eqref{eq:step091} we obtain
\begin{equation}\label{eq:NfT}
\left\Vert \partial_{t}f+\partial_{t}A\nabla u+\partial_{t}Bu \right\Vert_{L^{2}\left(Q_{T}\right)}
\leq 
\max(M_{T}C_1,1)\left(\left\Vert u^{0}\right\Vert _{L^{2}\left(\Omega\right)}+\left\Vert f\right\Vert _{H^{1}\left(0,T;L^2(\Omega)\right)}\right) 
\end{equation}
Thus \eqref{eq:step090} becomes 
\begin{align*}
\sqrt{\frac{\lambda}{2}}\left\Vert \partial_{t}\nabla u\right\Vert _{L^{2}\left(\left(0,T\right);L^{2}\left(\Omega\right)\right)} 
& \leq
\sqrt{\frac{M_\omega}{2}}
\left\Vert \left.\partial_{t}u\right|_{t=0}\right\Vert _{L^{2}\left(\Omega\right)}+\left\Vert \partial_{t}f+\partial_{t}A\nabla u+\partial_{t}Bu\right\Vert _{L^{2}\left(Q_{T}\right)}\\
 & \leq\sqrt{\frac{M_\omega}{2}}(M_{A}+M_{B})\left(\left\Vert u^{0}\right\Vert _{H^{2}\left(\Omega\right)}+\left\Vert f\right\Vert _{C\left(\left[0,T\right];H^{1}\left(\Omega\right)\right)}\right)\\
& \quad +\max(M_{T}C_1,1)\left(\left\Vert u^{0}\right\Vert _{L^{2}\left(\Omega\right)}+\left\Vert f\right\Vert _{H^{1}\left(0,T;L^2(\Omega)\right)}\right) ,
 \end{align*}
and \eqref{eq:step089} gives 
\begin{align*}
\left\Vert \partial_{t}u\right\Vert _{C\left([0,T];L^{2}\left(\Omega\right)\right)} & \leq M_{\omega}^{\frac{1}{2}}\left[\exp\left(\sqrt{2}M_{\omega}M_{B}T\right)\left\Vert \partial_{t}f+\partial_{t}A\nabla u+\partial_{t}Bu\right\Vert _{L^{2}\left(Q_{T}\right)}+M_{\omega}^{\frac{1}{2}}\left\Vert \left.\partial_{t}u\right|_{t=0}\right\Vert _{L^{2}\left(\Omega\right)}\right]\\
 & \leq C_{5}\left(\left\Vert u^{0}\right\Vert _{H^{2}\left(\Omega\right)}+\left\Vert f\right\Vert _{C\left(\left[0,T\right];H^{1}\left(\Omega\right)\right)}+\left\Vert f\right\Vert _{H^{1}\left(0,T;L^{2}\left(\Omega\right)\right)}\right),
\end{align*}
with 
\begin{equation}
C_{5}=M_{\omega}\left(M_{A}+M_{B}\right) +M_{\omega}^\frac{1}{2} \max(M_{T}C_1,1) \exp\left(\sqrt{2}M_{\omega}M_{B}T\right).\label{eq:defC5-1}
\end{equation}
Finally, we observe that the right-hand side of the identity 
\[
\div\left(A\nabla u\right)=\partial_{t}u-\div\left(Bu+f\right),
\]
belongs to $C\left([0,T];L^{2}\left(\Omega\right)\right)$,
and therefore the left-hand side belongs to the same space. This in turn shows that $u\in H^2(\Omega)$  for any  $t$, in fact $u\in C\left(\left[0,T\right];H^{2}\left(\Omega\right)\right)$,
see, for example, \cite{PLUM199236}, with 
\[
\left\Vert u\right\Vert _{C\left(\left[0,T\right];H^{2}\left(\Omega\right)\right)}\leq 
C(\Omega,M_A,\lambda)\left(C_{5}+M_B C_1\right)\left(\left\Vert u^{0}\right\Vert _{H^{2}\left(\Omega\right)}+\left\Vert f\right\Vert _{C\left(\left[0,T\right];H^{1}\left(\Omega\right)\right)}+\left\Vert f\right\Vert _{H^{1}\left(0,T;L^{2}\left(\Omega\right)\right)}\right).
\]
Altogether, we have shown 
\begin{equation}\label{eq:West-1}
\left\Vert u\right\Vert _{C\left(\left[0,T\right];H^{2}\left(\Omega\right)\right)}+\left\Vert u\right\Vert _{H^{1}\left(0,T;H^{1}\left(\Omega\right)\right)}
\leq 
\frac{1}{2} C_{T}\left(\left\Vert u^{0}\right\Vert _{H^{2}\left(\Omega\right)}+\left\Vert f\right\Vert _{C\left(\left[0,T\right];H^{1}\left(\Omega\right)\right)}+\left\Vert f\right\Vert _{H^{1}\left(0,T;L^{2}\left(\Omega\right)\right)}\right),
\end{equation}
where 
\begin{equation}
C_{T}=2 \left(C(\Omega,M_A,\lambda)\left(C_{5}+M_B C_1\right) +\sqrt{\frac{M_\omega}{\lambda}}\left(M_A+M_B\right) + \sqrt{\frac{2}{\lambda}}\max(M_TC_1,1) \right),\label{eq:defC6}
\end{equation}
and $C_4$ and $C_{5}$ are given by \eqref{eq:defC4a} and \eqref{eq:defC5-1}, respectively, as announced.

Let us now turn to the particular case when $B=A\nabla V$, with $V\in C^{2}\left(\overline{\Omega}\right)$, and $A$ and $V$ are independent of time.
We perform the change of unknown $w=u\exp V$ and, thanks to Lemma~\ref{lem:changeunknown},
we can study the problem satisfied by $w$. 
We have 
\[
\begin{aligned}
\exp\left(-V\right)\partial_{t}w-\div\left[A\exp\left(-V\right)\nabla w+f\right] & =0 &  & \text{in}\quad\Omega,\\
\left[A\exp(-V)\nabla w+f\right]\cdot\nu & =0 &  & \text{on}\quad\partial\Omega,\\
w(0) & =u^{0}\exp(V) &  & \text{in}\quad\Omega,
\end{aligned}
\]
that is, the same system as \eqref{eq:bvpplusdrift} above, with $\omega=\exp(-V)$, $M_{B}=0$
and $M_{T}=0.$ In this case, 
\[
 C_{2}= 1+\sqrt{M_\omega},\quad
 C_{3}= 0, \quad
 C_{4}= C_{2}^{2}, \quad 
 C_{5}= M_{\omega}M_{A}+M_{\omega}^\frac{1}{2},
 \]
and  the constant $C_{T}$ in \eqref{eq:defC6} becomes 
\[
 \tilde{C}'=2 \left(C(\Omega,M_A,\lambda) C_{5} +\sqrt{\frac{M_\omega}{\lambda}}M_A + \sqrt{\frac{2}{\lambda}} \right),
\]
and it does not depend on $T$. Thanks to Lemma \ref{lem:changeunknown},
we find that in terms of $u$ the bound \eqref{eq:West-1} holds with the following constant
\begin{equation}
C_{\infty}=\tilde{C}\left[\left(1+M_{V}^{\prime}\right)^{2}+M_{V}^{\prime\prime}\right]\exp M_{V},\label{eq:C6prime}
\end{equation}
which again is independent of $T$. 
\end{proof}

\begin{rmrk}[Ellipticity bound for $\epsilon$]\label{rem:ellipticitybound}
Suppose that an {\emph{a priori}}  bound for $u$ on $Q_T$ is known, say $u^* = \sup_{Q_T} |u|$. 
For any $\xi_{i}^{\alpha}\in\mathbb{R}^{d\times m},\zeta_j\in\mathbb{R}^{m}$, we have the lower bound
\begin{align}\label{eq:ellipticitybound}
\begin{aligned}
\mathfrak D_{ij}^{\alpha\beta}(t,x,y)\xi_{i}^{\alpha} \xi_{j}^{\beta} 
=  D_i^{\alpha \beta}(t,x)\xi_{i}^{\alpha}\xi_{i}^{\beta} 
+
\epsilon a_{ij}^{\alpha\beta}(t,x)\phi_{ij}^{\alpha\beta}(y)\xi_{i}^{\alpha} \xi_{j}^{\beta}  \geq   (\lambda-\epsilon L_0(u^*) \| a \|_{\infty})|\xi|^{2},
\end{aligned}
\end{align}
where $\| a \|_\infty = \max_{i,j,\alpha,\beta,x} |a_{ij}^{\alpha\beta}(x)|$  and $L_0$ is given in \eqref{eq:DefLi}. 
Therefore, choosing 
\begin{equation}\label{eq:practicalellipticity}
\epsilon < \min\left(\frac{\lambda}{1+ \| a \|_\infty L_0(u^*)},1\right)
\end{equation}
guarantees coercivity, and this is sufficient to ensure existence and uniqueness of weak solutions of \eqref{eq:parab-linear}, and 
consequently of \eqref{eq:1091} via Lax--Milgram lemma. We use relation \eqref{eq:practicalellipticity} to derive an {\emph{a priori}} upper bound for $\epsilon$ in a specific case, see Lemma \ref{lem:second_bound}.
\end{rmrk}

\begin{lmm}
\label{lem:changeunknown} Given $V\in C^{2}\left(\overline{\Omega}\right)$,
the map $u\to u\exp(V)$ is a bi-continuous isomorphism in $C\left(\left[0,T\right];H^{2}\left(\Omega\right)\right)\cap H^{1}\left(0,T;H^{1}\left(\Omega\right)\right).$
The following inequalities hold 
\begin{align*}
\left\Vert u\exp(V)\right\Vert _{W\left(0,T,\Omega\right)} & \leq\left[\left(1+M_{V}^{\prime}\right)^{2}+M_{V}^{\prime\prime}\right]\exp M_{V}\left\Vert u\right\Vert _{W\left(0,T,\Omega\right)},\\
\left\Vert u\right\Vert _{W\left(0,T,\Omega\right)} & \leq\left[\left(1+M_{V}^{\prime}\right)^{2}+M_{V}^{\prime\prime}\right]\exp M_{V}\left\Vert u\exp(V)\right\Vert _{W\left(0,T,\Omega\right)},
\end{align*}
where $M_{V}=\sup_{\Omega}\left|V\right|$, $M_{V}^{\prime}=\sup_{\Omega}\left|\nabla V\right|$
and $M_{V}^{\prime\prime}=\sup_{\Omega}\left|\nabla^{2}V\right|.$
\end{lmm}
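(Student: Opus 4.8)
The plan is to prove the first inequality by differentiating $w:=u\exp(V)$ term by term, and then obtain the second inequality and the isomorphism statement for free by symmetry, since the inverse map is $w\mapsto w\exp(-V)$ and $-V$ has the same $C^{2}$ bounds as $V$. Because $V$ is independent of time, $\partial_{t}w=(\partial_{t}u)\exp(V)$ and $\partial_{tt}w=(\partial_{tt}u)\exp(V)$, while the spatial and mixed derivatives are $\partial_{\alpha}w=(\partial_{\alpha}u+u\,\partial_{\alpha}V)\exp(V)$, $\partial_{\alpha}\partial_{t}w=(\partial_{\alpha}\partial_{t}u+(\partial_{t}u)\,\partial_{\alpha}V)\exp(V)$, and $\partial_{\alpha}\partial_{\beta}w=(\partial_{\alpha}\partial_{\beta}u+\partial_{\alpha}u\,\partial_{\beta}V+\partial_{\beta}u\,\partial_{\alpha}V+u\,\partial_{\alpha}V\,\partial_{\beta}V+u\,\partial_{\alpha}\partial_{\beta}V)\exp(V)$. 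Since $V\in C^{2}(\overline{\Omega})\subset W^{2,\infty}(\Omega)$ and $\Omega$ is bounded, each of these products lies in the expected Lebesgue space, so $w\in C([0,T];H^{2}(\Omega))\cap H^{1}(0,T;H^{1}(\Omega))$ whenever $u$ does.

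Next I would take $L^{2}(\Omega)$ norms of each identity at fixed $t$, using the pointwise bounds $|\exp(V)|\le\exp M_{V}$, $|\nabla V|\le M_{V}'$ and $|\nabla^{2}V|\le M_{V}''$ on $\overline{\Omega}$, and collect the contributions. The coefficient of $\|u\|_{L^{2}(\Omega)}$ produced by the second spatial derivative is at most $1+2M_{V}'+(M_{V}')^{2}+M_{V}''=(1+M_{V}')^{2}+M_{V}''$, and this dominates all the constants that appear in the bounds for $\|w\|_{L^{2}(\Omega)}$, $\|\nabla w\|_{L^{2}(\Omega)}$, $\|\partial_{t}w\|_{L^{2}(\Omega)}$, $\|\nabla\partial_{t}w\|_{L^{2}(\Omega)}$ and $\|\partial_{tt}w\|_{L^{2}(\Omega)}$, each of which is at most $(1+M_{V}')\exp M_{V}$. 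Hence $\|w(t)\|_{H^{2}(\Omega)}\le[(1+M_{V}')^{2}+M_{V}'']\exp(M_{V})\,\|u(t)\|_{H^{2}(\Omega)}$ for every $t$, and likewise the time-derivative part of the $W(Q_{T})$ norm of $w$ is bounded by the same constant times that of $u$. Taking the supremum over $t\in[0,T]$ in the first estimate and combining with the second yields the first claimed inequality for the $W(0,T,\Omega)$ norm.

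Finally, the map $u\mapsto u\exp(V)$ is linear with two-sided inverse $w\mapsto w\exp(-V)$; since $-V\in C^{2}(\overline{\Omega})$ with $M_{-V}=M_{V}$, $M_{-V}'=M_{V}'$, $M_{-V}''=M_{V}''$, applying the inequality just proved with $-V$ in place of $V$ gives the second inequality. The two bounds together show both the map and its inverse are bounded operators on $W(0,T,\Omega)$, hence it is a bicontinuous isomorphism.

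I expect no genuine difficulty here: this is an elementary product-rule estimate. The only points requiring a little attention are bookkeeping the mixed second-order term so that the constant collapses exactly to $(1+M_{V}')^{2}+M_{V}''$, and verifying that no first-order or time-derivative contribution produces a constant larger than this one (which is why the $\exp(-V)$ direction needs the same constant, not a larger one).
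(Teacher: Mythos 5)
Your proposal is correct and follows essentially the same route as the paper: a product-rule computation of the derivatives of $u\exp(V)$ giving the bound with constant $\left[(1+M_{V}^{\prime})^{2}+M_{V}^{\prime\prime}\right]\exp M_{V}$, combined with the observation that replacing $V$ by $-V$ yields the inverse map with the same constants, so only one inequality needs proving. The only cosmetic difference is that you also track $\partial_{tt}w$ and the time-derivative terms explicitly, which is harmless and if anything slightly more complete than the paper's three displayed Sobolev estimates.
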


\begin{proof}
Note that it is sufficient to prove one inequality, as replacing $V$
by $-V$ changes the map to its inverse.
Indeed, we have 
\begin{align*}
\left\Vert u \exp(V)\right\Vert _{L^{2}(\Omega)} & \leq\exp M_{V}\left\Vert u\right\Vert _{L^{2}(\Omega)},\\
\left\Vert u \exp(V)\right\Vert _{H^{1}(\Omega)} & \leq\left(1+M_{V}^{\prime}\right)\exp M_{V}\left\Vert u\right\Vert _{H^{1}(\Omega)},\\
\left\Vert u \exp(V)\right\Vert _{H^{2}(\Omega)} & \leq\left[\left(1+M_{V}^{\prime}\right)^{2}+M_{V}^{\prime\prime}\right]\exp M_{V}\left\Vert u\right\Vert _{H^{2}(\Omega)}.
\end{align*}
\end{proof}

The second step in the proof of Lemma \ref{lem:313} concerns the regularity of the forcing term $f$, which coincides with the regularity of the cross-diffusion term, provided that $\yply$ and $u$ are in $W\left(Q_T\right)$. 
\begin{lmm}\label{lem:FNL}
The map
\begin{align}
\label{eq:defFNL}
\begin{aligned}
	P:Q_T \times C^{\infty}\left(Q_{T};\mathbb{R}^{m}\right)^2 \quad  &\to \quad  C^{2}\left(Q_{T}:\mathbb{R}^{m \times d}\right) \\
\left(t,x,\yply,u\right)\quad  & \to \quad    a_{ij}^{\alpha\beta}(t,x)\phi_{ij}^{\alpha\beta}\left(\yply\right)\partial_{\beta}u_{j}+b_{ij}^{\alpha}(t,x)\psi_{ij}^{\alpha}\left(\yply\right)u_{j}, 
\end{aligned}
\end{align}
has the following property
\[
P\left(Q_T \times W\left(Q_{T}\right)\times W\left(Q_{T}\right)\right)\subset C\left(\left[0,T\right];H^{1}\left(\Omega;\mathbb{R}^{m}\right)\right)\cap H^{1}\left(0,T;L^{2}\left(\Omega;\mathbb{R}^{m}\right)\right).
\]
 Furthermore, there holds 
\[
\sup_{\left[0,T\right]}\left(\left\Vert \nabla P_{i}(t,x,\yply,u)\right\Vert _{L^{2}(\Omega)}
+
\left\Vert P_{i}\left(t,x,\yply,u\right)\right\Vert _{L^{2}\left(\Omega\right)}
\right)+\left\Vert \partial_{t}P_{i}\left(t,x,\yply,u\right)\right\Vert _{L^{2}\left(Q_{T}\right)} 
\leq K_0 
\left(\left\Vert \yply\right\Vert _{W\left(Q_{T}\right)} \right) \left\Vert u\right\Vert _{W\left(Q_{T}\right)},
\]
where $K_0$ is given by \eqref{eq:defKo}. 
\end{lmm}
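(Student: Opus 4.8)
The plan is to establish both the mapping property and the quantitative bound via the Leibniz rule, reducing everything to pointwise estimates on a short list of product terms, each of which is then controlled by H\"older's inequality together with the Sobolev embeddings $H^2(\Omega)\hookrightarrow L^\infty(\Omega)$ and $H^1(\Omega)\hookrightarrow L^4(\Omega)$; both embeddings hold precisely because $d\le 3$. First I would write $P=P^{(1)}+P^{(2)}$ with $P^{(1)}_i=a_{ij}^{\alpha\beta}\phi_{ij}^{\alpha\beta}(\yply)\,\partial_{\beta}u_{j}$ and $P^{(2)}_i=b_{ij}^{\alpha}\psi_{ij}^{\alpha}(\yply)\,u_{j}$, and record the pointwise bounds used throughout: since $\yply\in C([0,T];H^2(\Omega))$, for every $t$ one has $\|\yply(t)\|_{L^\infty(\Omega)}\le C_S^\infty\,\|\yply\|_{W(Q_T)}=:C_S^\infty R$, hence $|\phi(\yply)|,|\psi(\yply)|\le L_0(C_S^\infty R)$ and $|D\phi(\yply)|,|D\psi(\yply)|\le L_1(C_S^\infty R)$ pointwise on $Q_T$ by \eqref{eq:DefLi}, while $|a|,|b|$ and all their first-order space and time derivatives are bounded by $M$ thanks to \eqref{eq:LPA2}.

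Next I would differentiate. Applying $\nabla$ to $P^{(1)}$ and $P^{(2)}$ produces three kinds of terms for each: coefficient-derivative terms ($\nabla a\,\phi(\yply)\,\partial_\beta u$ and $\nabla b\,\psi(\yply)\,u$); chain-rule terms ($a\,D\phi(\yply)\,\nabla\yply\,\partial_\beta u$ and $b\,D\psi(\yply)\,\nabla\yply\,u$); and leading-derivative terms ($a\,\phi(\yply)\,\nabla\partial_\beta u$ and $b\,\psi(\yply)\,\nabla u$). The coefficient and leading terms, as well as $P$ itself, are products of one $L^\infty$ factor (bounded by $M\,L_0(C_S^\infty R)$) with one of $u,\nabla u,\nabla^2u\in C([0,T];L^2(\Omega))$, hence bounded in $L^2(\Omega)$, uniformly in $t$, by $M\,L_0(C_S^\infty R)\,\|u\|_{W(Q_T)}$. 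The chain-rule terms carry the additional factor $\nabla\yply$: here I would use H\"older with exponents $(4,4)$ and $H^1(\Omega)\hookrightarrow L^4(\Omega)$ to get $\|\nabla\yply\,\partial_\beta u\|_{L^2(\Omega)}\le C_S^2\,\|\yply\|_{H^2(\Omega)}\|u\|_{H^2(\Omega)}\le C_S^2 R\,\|u\|_{W(Q_T)}$, and similarly $\|\nabla\yply\,u\|_{L^2(\Omega)}\le C_S^2 R\,\|u\|_{W(Q_T)}$, so each chain-rule term is bounded by $M\,C_S^2 L_1(C_S^\infty R)\,R\,\|u\|_{W(Q_T)}$. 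Summing over the terms with the multiplicities they occur gives $\sup_{[0,T]}\big(\|\nabla P_i\|_{L^2(\Omega)}+\|P_i\|_{L^2(\Omega)}\big)\le K_0(R)\|u\|_{W(Q_T)}$ with $K_0$ as in \eqref{eq:defKo}; the supremum in $t$ is legitimate because every factor lies in a space of the form $C([0,T];\cdot)$.

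The time derivative is handled identically. Differentiating in $t$ yields coefficient terms ($\partial_t a\,\phi(\yply)\,\partial_\beta u$, $\partial_t b\,\psi(\yply)\,u$), chain-rule terms ($a\,D\phi(\yply)\,\partial_t\yply\,\partial_\beta u$, $b\,D\psi(\yply)\,\partial_t\yply\,u$) and leading terms ($a\,\phi(\yply)\,\partial_\beta\partial_t u$, $b\,\psi(\yply)\,\partial_t u$). The coefficient and leading terms are controlled in $L^2(Q_T)$ by $M\,L_0(C_S^\infty R)$ times one of $\|\nabla u\|_{L^2(Q_T)}$, $\|u\|_{L^2(Q_T)}$, $\|\nabla\partial_t u\|_{L^2(Q_T)}$ or $\|\partial_t u\|_{L^2(Q_T)}$, all finite because $u\in W(Q_T)$ --- this is exactly the role of the requirement $\partial_t u\in H^1(Q_T)$ built into the definition of $W(Q_T)$. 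The chain-rule terms are treated with H\"older $(4,4)$, using $\partial_t\yply\in H^1(Q_T)\hookrightarrow L^4(Q_T)$ (valid since $\dim Q_T=d+1\le 4$) together with $\nabla u,u\in L^4(Q_T)$. This shows $\partial_t P\in L^2(Q_T)$ with the claimed bound and, combined with the previous step, that $P(\cdot,\cdot,\yply,u)\in C([0,T];H^1(\Omega))\cap H^1(0,T;L^2(\Omega))$.

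I expect the main obstacle to be the chain-rule terms, which are genuinely quadratic in first derivatives --- $\nabla\yply\cdot\nabla u$ inside $\nabla P$ and $\partial_t\yply\cdot\nabla u$ inside $\partial_t P$ --- and whose control forces the critical embeddings $H^1(\Omega)\hookrightarrow L^4(\Omega)$ and $H^1(Q_T)\hookrightarrow L^4(Q_T)$, i.e. the restriction $d\le 3$; this is also where the constant $C_S^2$ and the factor $R=\|\yply\|_{W(Q_T)}$ enter $K_0$. A secondary point requiring care is the rigorous justification of the Leibniz differentiation in $t$ and of the $C([0,T];H^1)$ continuity: since the statement presents $P$ on $C^\infty(Q_T)$ functions, I would carry out the computation for smooth $\yply,u$ first, derive the estimate there, and then extend to arbitrary $\yply,u\in W(Q_T)$ by density, the estimate itself supplying the continuity of the extension.
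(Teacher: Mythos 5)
Your proposal is correct and follows essentially the same route as the paper: pointwise Leibniz/chain-rule bounds using $\sup_{Q_T}|\yply|\leq C_S^\infty\|\yply\|_{W(Q_T)}$, H\"older with the embedding $H^1(\Omega)\hookrightarrow L^4(\Omega)$ for the quadratic gradient terms, and the analogous treatment of $\partial_t P$, summing to the constant $K_0$ of \eqref{eq:defKo}. Your handling of the term $D\phi(\yply)\,\partial_t\yply\,\nabla u$ via $\partial_t\yply\in H^1(Q_T)\hookrightarrow L^4(Q_T)$ is a slightly more explicit bookkeeping of the same estimate the paper performs (compare the proof of Lemma \ref{lem:317}), and the closing density remark is a harmless addition rather than a different argument.
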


\begin{proof}
Note that $L^{\infty}\left(Q_{T}\right)\subset C\left(\left[0,T\right];H^{2}\left(\Omega;\mathbb{R}^{m}\right)\right)$. Therefore 
\[
\sup_{Q_{T}}\left|\yply\right|\leq {C_{S}^{\infty}}\left\Vert \yply\right\Vert _{W\left(Q_{T}\right)},
\]
where 
\begin{equation}\label{eq:defCSinfty}
C_{S}^{\infty}=C(H^2(\Omega)\hookrightarrow L^\infty(\Omega)) 
\end{equation}
 is the Sobolev constant associated to the embedding of $H^2(\Omega)$ into $L^\infty(\Omega)$, and depends on $\Omega$ and $d$.  We compute the following bounds for $P$
\begin{align*}
|P_{i}(t,x,\yply,u)| & \leq\sup_{\Omega\times[0,\infty)}(|a|,|b|)L_{0}\left(\sup_{Q_{T}}\left|\yply\right|\right)\left(\left|\nabla u\right|+\left|u\right|\right),
\\
\left\Vert P_{i}\left(t,x,\yply,u\right)\right\Vert _{L^{2}\left(\Omega\right)} & \leq M \left\Vert u\right\Vert _{H^{1}\left(\Omega\right)}
 \leq ML_{0}\left({C_{S}^{\infty}}\left\Vert \yply\right\Vert _{W\left(Q_{T}\right)}\right) \left\Vert u\right\Vert _{W\left(Q_{T}\right)},
\end{align*}
for all $t\in\left[0,T\right]$.
Similarly, for the spatial derivatives of $P$ we have
\begin{align*}
\left|\partial_{\alpha}P_{i}(t,x,\yply,u)\right| &\leq  M\left(L_{0}\left({C_{S}^{\infty}}\left\Vert \yply\right\Vert _{W\left(Q_{T}\right)}\right)+L_{1}\left({C_{S}^{\infty}}\left\Vert \yply\right\Vert _{W\left(Q_{T}\right)}\right)\left|\nabla\yply\right|\right)\left(\left|\nabla u\right|+\left|u\right|\right)\\
 & \quad +ML_{0}\left({C_{S}^{\infty}}\left\Vert \yply\right\Vert _{W\left(Q_{T}\right)}\right)\left(\left|\nabla^{2}u\right|+\left|\nabla u\right|\right).
\end{align*}
Therefore, using Cauchy--Schwarz and the Sobolev embedding $H^{1}\left(\Omega\right)\hookrightarrow L^{4}\left(\Omega\right)$  we find
\begin{align*}
 \left\Vert \nabla P_{i}(t,x,\yply,u )\right\Vert _{L^{2}(\Omega)} &\leq 2 M L_{0}\left({C_{S}^{\infty}}\left\Vert \yply\right\Vert _{W\left(Q_{T}\right)}\right)\left\Vert u\right\Vert _{W\left(Q_{T}\right)}  +ML_{1}\left({C_{S}^{\infty}}\left\Vert \yply\right\Vert _{W\left(Q_{T}\right)}\right)\left\Vert \nabla\yply\right\Vert _{L^{4}}\left(\left\Vert \nabla u\right\Vert _{L^{4}}+\left\Vert u\right\Vert _{L^{4}}\right)\\
 & \leq M\left[2L_{0}\left({C_{S}^{\infty}}\left\Vert \yply\right\Vert _{W\left(Q_{T}\right)}\right)+C_S^2 L_{1}\left({C_{S}^{\infty}}\left\Vert \yply\right\Vert _{W\left(Q_{T}\right)}\right)\left\Vert \yply\right\Vert _{W\left(Q_{T}\right)}\right]\left\Vert u\right\Vert _{W\left(Q_{T}\right)},
\end{align*}
where $C_S^2$ is defined by \eqref{eq:defCS2}. This shows that $P_{i}\left(t,x,\yply,u\right)\in C\left(\left[0,T\right];H^{1}\left(\Omega\right)\right).$
Finally, for the time derivative we obtain
\begin{align*}
 \left|\partial_{t}P_{i}(t,x,\yply,u)\right| &\leq M\left[ L_{0}\left({C_{S}^{\infty}}\left\Vert \yply\right\Vert _{W\left(Q_{T}\right)}\right)
 +L_{1}\left({C_{S}^{\infty}}\left\Vert \yply\right\Vert _{W\left(Q_{T}\right)}\right)\left|\partial_{t}\yply\right|\right]\left(|\nabla u|+|u|\right)\\
 & \quad +ML_{0}\left({C_{S}^{\infty}} \left\Vert \yply\right\Vert _{W(Q_{T})}\right)\left(\left|\nabla\partial_{t}u\right|+\left|\partial_{t}u\right|\right),
\end{align*}
and
\[
\left\Vert \partial_{t}P_{i}(t,x,\yply,u)\right\Vert _{L^{2}\left(Q_{T}\right)}\leq M\left[2L_{0}\left({C_{S}^{\infty}}\left\Vert \yply\right\Vert _{W\left(Q_{T}\right)}\right)
+L_{1}\left({C_{S}^{\infty}}\left\Vert \yply\right\Vert _{W\left(Q_{T}\right)}\right)\left\Vert \partial_{t}\yply\right\Vert_{L^2(Q_T)}\right]\left\Vert u\right\Vert _{W\left(Q_{T}\right)}.
\]
Altogether we have shown that
$$
 \sup_{\left[0,T\right]}\left(\left\Vert \nabla P_{i}(t,x,\yply,u)\right\Vert _{L^{2}(\Omega)} 
+
\left\Vert P_{i}(t,x,\yply,u)\right\Vert _{L^{2}\left(\Omega\right)}
\right)+\left\Vert \partial_{t}P_{i}(t,x,\yply,u)\right\Vert _{L^{2}\left(Q_{T}\right)}
\leq K_0 \left( \left\Vert \yply\right\Vert _{W\left(Q_{T}\right)}\right)   \left\Vert u\right\Vert _{W\left(Q_{T}\right)},
$$
where $K_0$ is defined by \eqref{eq:defKo}, as announced. 
\end{proof}

\begin{proof}[Proof of Lemma \ref{lem:313}]
We write 
\[
\partial_{\alpha}\left[\mathfrak D_{ij}^{\alpha\beta}(t,x,\yply)\partial_{\beta}u_{j}-\mathfrak F_{ij}^{\alpha}(t,x,\yply)u_{j}+f_{i}^{\alpha}\right]=\partial_{\alpha}\left[D_{i}^{\alpha\beta}(t,x)\partial_{\beta}u_{j}-F_{i}^{\alpha}(t,x)u_{j}+g_{i}^{\alpha}\right],
\]
with $g_{i}^{\alpha}=f_{i}^{\alpha}+\epsilon P_{i}^{\alpha}(t,x,\yply,u)$, and $P$ given by \eqref{eq:defFNL}. 
Lemma \ref{lem:312} shows that 
\[
\left\Vert u\right\Vert _{W\left(Q_{T}\right)}\leq \frac{1}{2}  C_{T}\left(\left\Vert u^{0}\right\Vert _{H^{2}\left(\Omega\right)}+\left\Vert g\right\Vert _{C\left(\left[0,T\right];H^{1}\left(\Omega\right)\right)\cap H^{1}\left(0,T;L^{2}\left(\Omega\right)\right)}\right),
\]
and 
\begin{equation*}
\Vert g\Vert _{C([0,T];H^{1}(\Omega))\cap H^{1}(0,T;L^{2}(\Omega))} 
 \leq\left\Vert f\right\Vert _{C\left(\left[0,T\right];H^{1}\left(\Omega\right)\right)\cap H^{1}\left(0,T;L^{2}\left(\Omega\right)\right)} +\epsilon\left\Vert P(t,x,\yply,u)\right\Vert _{C\left(\left[0,T\right];H^{1}\left(\Omega\right)\right)\cap H^{1}\left(0,T;L^{2}\left(\Omega\right)\right)}.
\end{equation*}

Thanks to Lemma \ref{lem:FNL}, there holds 
\[
  \sup_{\left[0,T\right]}\left(\left\Vert \nabla P_{i}\left(t, x,\yply,u\right)\right\Vert _{L^{2}\left(\Omega\right)}\right)+\left\Vert \partial_{t}P_{i}\left(t, x, \yply,u\right)\right\Vert _{L^{2}\left(Q_{T}\right)}
\leq  
K_0\left( \left\Vert \yply\right\Vert _{W\left(Q_{T}\right)}\right)
\left\Vert u\right\Vert _{W\left(Q_{T}\right)},
\]
and therefore 
\[
 \left\Vert u\right\Vert _{W\left(Q_{T}\right)}\left[1-K_0\big( \Vert \yply \Vert _{W(Q_{T})}\big)
\right]
  \leq \frac{1}{2} C_{T}\left(\left\Vert u^{0}\right\Vert _{H^{2}\left(\Omega\right)}
  +\left\Vert f\right\Vert _{C\left(\left[0,T\right];H^{1}\left(\Omega\right)\right)\cap H^{1}\left(0,T;L^{2}\left(\Omega\right)\right)}\right),
\]
 which is our thesis, as thanks to the Fredholm Alternative, boundedness implies existence and uniqueness.  The proof in the time independent case is analogous and $C_T$ is replaced by $C_\infty$.
\end{proof}

\textbf{Acknowledgements.}
The authors are very thankful for the detailed comments and suggestions of the referees which have significantly improved the quality of this manuscript. The third author was visiting Laboratoire Jacques-Louis Lions  during the final stage of this paper, and he is very grateful for the hospitality and warmth of his hosts. 

\def\cprime{$'$}


\end{document}